\newcommand{\geo}{\operatorname{geo}}
\newcommand{\CM}{\overline{\operatorname{\mathcal{M}}}}
\newcommand{\IM}{\operatorname{\mathcal{M}}}
\newcommand{\LL}{\operatorname{\mathcal{L}}}
\newcommand{\PP}{\operatorname{\mathfrak{P}}}
\newcommand{\WW}{\operatorname{\mathfrak{W}}}
\newcommand{\Si}{\dot{S}}
\newcommand{\CP}{\mathbb{C}\mathbb{P}^1}
\newcommand{\CR}{\bar{\partial}}
\newcommand{\uz}{\underline{z}}
\newcommand{\Mph}{M_{\phi}}
\newcommand{\Symp}{\operatorname{Symp}}
\newcommand{\ev}{\operatorname{ev}}
\newcommand{\CZ}{\operatorname{CZ}}
\newcommand{\diag}{\operatorname{diag}}
\newcommand{\del}{\partial}
\newcommand{\RS}{\IR \times S^1}
\newcommand{\Ju}{\underline{J}}
\newcommand{\IC}{\operatorname{\mathbb{C}}}
\newcommand{\IR}{\operatorname{\mathbb{R}}}
\newcommand{\IN}{\operatorname{\mathbb{N}}}
\newcommand{\ID}{\operatorname{\mathbf{D}}}
\newcommand{\IH}{\operatorname{\mathbf{H}}}
\newcommand{\Ih}{\operatorname{\mathbf{h}}}
\newcommand{\Ig}{\operatorname{\mathbf{g}}}
\newcommand{\IF}{\operatorname{\mathbf{F}}}
\newcommand{\If}{\operatorname{\mathbf{f}}}
\newtheorem{theorem}{Theorem}[section]
\newtheorem{proposition}[theorem]{Proposition}
\newtheorem{corollary}[theorem]{Corollary}
\newtheorem{remark}[theorem]{Remark}
\newtheorem{ex}[theorem]{Example}
\newenvironment{example}{\begin{ex}\rm}{\qee\end{ex}}
\newcommand{\qee}{\mbox{\hspace{0.2mm}}\hfill$\triangle$}
\title{Topological recursion relations in \\non-equivariant cylindrical contact homology}
\author{Oliver Fabert and Paolo Rossi}
\begin{document}

\begin{abstract}
It was pointed out by Eliashberg in his ICM 2006 plenary talk that the integrable systems of rational Gromov-Witten theory very naturally appear in the rich algebraic formalism of symplectic field theory (SFT). Carefully generalizing the definition of gravitational descendants from Gromov-Witten theory to SFT, one can assign to every contact manifold a Hamiltonian system with symmetries on SFT homology and the question of its integrability arises. While we have shown how the well-known string, dilaton and divisor equations translate from Gromov-Witten theory to SFT, the next step is to show how genus-zero topological recursion translates to SFT. Compatible with the example of SFT of closed geodesics, it turns out that the corresponding localization theorem requires a non-equivariant version of SFT, which is generated by parametrized instead of unparametrized closed Reeb orbits. Since this non-equivariant version is so far only defined for cylindrical contact homology, we restrict ourselves to this special case. As an important result we show that, as in rational Gromov-Witten theory, all descendant invariants can be computed from primary invariants, i.e. without descendants.
\end{abstract}

\maketitle

\tableofcontents

\markboth{O. Fabert and P. Rossi}{Topological recursion in cylindrical homology} 

\section{Introduction}

Symplectic field theory (SFT), introduced by H. Hofer, A. Givental and Y. Eliashberg in 2000 ([EGH]), is a very large 
project and can be viewed as a topological quantum field theory approach to Gromov-Witten theory. Besides providing a 
unified view on established pseudoholomorphic curve theories like symplectic Floer homology, contact homology and 
Gromov-Witten theory, it leads to numerous new applications and opens new routes yet to be explored. \\
 
While symplectic field theory leads to algebraic invariants with very rich algebraic structures, it was pointed out by Eliashberg in his ICM 2006 plenary talk ([E]) that the integrable systems of rational Gromov-Witten theory very naturally appear in rational symplectic field theory by using the link between the rational symplectic field theory of prequantization spaces in the Morse-Bott version and the rational Gromov-Witten potential of the underlying symplectic manifold, see the recent papers \cite{R1}, \cite{R2} by the second author. Indeed, after introducing gravitational descendants as in Gromov-Witten theory, it is precisely the rich algebraic formalism of SFT with its Weyl and Poisson structures that provides a natural link between symplectic field theory and (quantum) integrable systems. \\ 
 
On the other hand, carefully defining a generalization of gravitational descendants and adding them to the picture, the first author has shown in \cite{F2} that one can assign to every contact manifold an infinite sequence of commuting Hamiltonian systems on 
SFT homology and the question of their integrability arises. For this it is important to fully understand the algebraic structure of 
gravitational descendants in SFT. \\

While it is well-known that in Gromov-Witten theory the topological meaning of gravitational descendants leads to new differential 
equations for the Gromov-Witten potential, in this paper we want to proceed with our project of understanding how these rich algebraic structures carry over from Gromov-Witten theory to symplectic field theory.  While we have already shown in \cite{FR} how the well-known string, dilaton and divisor equations translate from Gromov-Witten theory to SFT, as a next step we want to show how classical genus-zero topological recursion generalizes to symplectic field theory. \\

Although this is a first concrete step in the study of integrability of the Hamiltonian systems of SFT, notice that topological recursion relations in the forms we study here might not be enough to answer the question of integrability: the Hamiltonian systems arising from SFT are, a priori, much more general than those associated with Gromov-Witten invariants, involving in particular more than just local functionals (see \cite{R2}), and topological recursion relations, even together with string, dilaton and divisor equations, might not yet restrictive enough to grant complete control over the algebra of commuting Hamiltonians. They seem however to give an affirmative answer to the fundamental question of the reconstructability of the gravitational descendants from the primary invariants (i.e. without descendants) in genus $0$.\\

From the computation of the SFT of a Reeb orbit with descendants in \cite{F2} it can be seen that the genus-zero topological recursion requires a non-equivariant version of SFT, which is generated by parametrized instead of unparametrized Reeb orbits. The definition of this non-equivariant version of SFT is currently a very active field of research and related to the work of Bourgeois and Oancea in \cite{BO}, where a Gysin-type spectral sequence relating linearized contact homology (a slight generalization of cylindrical contact homology depending on a symplectic filling) and symplectic homology of this filling is established by viewing the one as the 
(non-)equivariant version of the other. \\

Since the topological recursion relation is already interesting in the case of cylindrical contact homology and the non-equivariant version of it is already understood, in this first paper on topological recursion we restrict ourselves to cylindrical contact homology, i.e. study the algebraic structure of gravitational descendants only for this special case. \\

This paper is organized as follows: While in section two we review the most important definitions and results about SFT with gravitational descendants and its relation with integrable systems in \cite{F2} and \cite{FR} (including short discussions of how to interpret the SFT homology algebra as a Poisson algebra of functions on a singular phase space and how to generalize the notion of local functionals from Gromov-Witten theory to general contact manifolds), in section three we first show, as a motivation for our main result, how the topological recursion relations in Gromov-Witten theory carry over to symplectic Floer theory. Since this example suggests that the localization theorem for gravitational descendants needs a non-equivariant version of cylindrical contact homology which, similar to symplectic Floer homology, is generated by parametrized instead of unparametrized closed Reeb orbits, we recall in section four the definition of non-equivariant cylindrical homology from \cite{BO} and prove the topological recursion relations in the non-equivariant situation. Finally, in section five we discuss two important applications of our main result. First we show how the topological recursion formulas carry over from the non-equivariant to the equivariant situation and use this result to show that, as in rational Gromov-Witten theory, all descendant invariants can be computed from primary invariants, that is, without descendants. After this we show that our results can be further used to define an action of the (quantum) cohomology on non-equivariant cylindrical homology similar to the corresponding action on symplectic Floer homology defined in \cite{PSS}. At the end we show that in the Floer case of SFT we just get back the topological recursion relations in Floer homology from section three and that the action of quantum cohomology on non-equivariant homology splits and agrees with the action on Floer homology as defined in \cite{PSS}.\\

While the work on this paper began when both authors were members of the Mathematical Sciences Research Institute (MSRI) in Berkeley, most of the work was conducted when the first author was a postdoc at the Max Planck Institute (MPI) for Mathematics in the Sciences in Germany and the second author was a FSMP postdoc at the Institut de Mathematiques de Jussieu, Paris VI. They want to thank the institutes for their hospitality and their great working environment. Further they would like to thank Y. Eliashberg and D. Zvonkine for useful discussions.

\vspace{0.5cm}

\section{Symplectic field theory with gravitational descendants}

\subsection{Symplectic Field Theory}
Symplectic field theory (SFT) is a very large project, initiated by Y. Eliashberg,
A. Givental and H. Hofer in their paper \cite{EGH}, designed to describe in a unified way 
the theory of pseudoholomorphic curves in symplectic and contact topology. 
Besides providing a unified view on well-known theories like symplectic Floer 
homology and Gromov-Witten theory, it shows how to assign algebraic invariants 
to closed manifolds with a stable Hamiltonian structure. \\

Following \cite{BEHWZ} a Hamiltonian structure on a closed $(2m-1)$-dimensional manifold $V$ is a closed two-form $\omega$ 
on $V$, which is maximally nondegenerate in the sense that $\ker\omega=\{v\in TV:\omega(v,\cdot)=0\}$ 
is a one-dimensional distribution. The Hamiltonian structure is required to be stable in the sense that there exists a 
one-form $\lambda$ on $V$ such that $\ker\omega\subset\ker d\lambda$ and $\lambda(v)\neq 0$ for all $v\in\ker\omega-\{0\}$. 
Any stable Hamiltonian structure $(\omega,\lambda)$ defines a symplectic hyperplane distribution $(\xi=\ker\lambda,\omega_{\xi})$, 
where $\omega_{\xi}$ is the restriction of $\omega$, and 
a vector field $R$ on $V$ by requiring $R\in\ker\omega$ and $\lambda(R)=1$, which is called the Reeb vector field of the 
stable Hamiltonian structure. Examples for closed manifolds $V$ with a stable Hamiltonian structure $(\omega,\lambda)$ 
are contact manifolds, symplectic mapping tori and principal circle bundles over symplectic manifolds ([BEHWZ]): \\
 
First observe that when $\lambda$ is a contact form on $V$, it is easy to check that $(\omega:=d\lambda,\lambda)$ is a stable 
Hamiltonian structure and the symplectic hyperplane distribution agrees with the contact structure. 
For the other two cases, let $(M,\omega_M)$ be a symplectic manifold. Then every principal circle bundle $S^1\to V\to M$ and 
every symplectic mapping torus $M\to V\to S^1$, i.e. $V=\Mph=\IR\times M/\{(t,p)\sim(t+1,\phi(p))\}$ for 
$\phi\in\Symp(M,\omega)$ also carries a stable Hamiltonian structure. For the circle bundle the Hamiltonian 
structure is given by the pullback $\pi^*\omega$ under the bundle projection and we can choose as one-form $\lambda$ any $S^1$-connection form. 
On the other hand, the stable 
Hamiltonian structure on the mapping torus $V=\Mph$ is given by lifting the symplectic form to $\omega\in\Omega^2(\Mph)$ via the natural 
flat connection $TV=TS^1\oplus TM$ and setting $\lambda=dt$ for the natural $S^1$-coordinate $t$ on $\Mph$. 
While in the mapping torus case $\xi$ is always integrable, in the circle bundle case the hyperplane distribution $\xi$ may be integrable or 
non-integrable, even contact. \\ 

Symplectic field theory assigns algebraic invariants to closed manifolds $V$ with a stable Hamiltonian structure. 
The invariants are defined by counting $\Ju$-holomorphic curves in $\IR\times V$ with finite energy, 
where the underlying closed Riemann surfaces are explicitly allowed to have punctures, i.e. single points are removed.    
The almost complex structure $\Ju$ on the cylindrical manifold $\IR\times V$ is required to be cylindrical in the sense that it is  
$\IR$-independent, links the two natural vector fields on $\IR\times V$, namely the Reeb vector field 
$R$ and the $\IR$-direction $\del_s$, by $\Ju\del_s=R$, and turns the symplectic hyperplane distribution on $V$ into a complex subbundle of $TV$, $\xi=TV\cap \Ju TV$. It follows that a cylindrical almost complex structure $\Ju$ on $\IR\times V$ is determined by its restriction $\Ju_{\xi}$ to $\xi\subset TV$, which is required to be $\omega_{\xi}$-compatible in the sense that $\omega_{\xi}(\cdot,\Ju_{\xi}\cdot)$ defines a metric on $\xi$. Note that such almost complex structures $\Ju$ are called compatible with the stable Hamiltonian structure and that the set of these almost complex structures is non-empty and contractible. \\

We assume that the stable Hamiltonian structure is Morse in the sense that all closed orbits of the 
Reeb vector field are nondegenerate in the sense of \cite{BEHWZ}; in particular, the set 
of closed Reeb orbits is discrete. Further it is shown in \cite{BEHWZ} that all $\Ju$-holomorphic curves in $\IR\times V$ with finite energy are asymptotically cylindrical over collections of Reeb orbits $\Gamma^{\pm}=\{\gamma^{\pm}_1,\ldots,
\gamma^{\pm}_{n^{\pm}}\}$ as the $\IR$-factor tends to $\pm\infty$. We denote by $\CM_{g,r,A}(\Gamma^+,\Gamma^-)/\IR$ the corresponding compactified moduli space of genus $g$ curves with $r$ additional marked points representing the absolute homology class $A\in H_2(V)$ using a choice of spanning surfaces ([BEHWZ],[EGH]). After choosing abstract perturbations using polyfolds following \cite{HWZ}, we get that 
$\CM_{g,r,A}(\Gamma^+,\Gamma^-)$ is a (weighted branched) manifold with corners of dimension 
equal to the Fredholm index of the Cauchy-Riemann operator for $\Ju$. 
{\it Note that as in \cite{F2} and \cite{FR} we will not discuss transversality for the Cauchy-Riemann operator but just refer to the upcoming 
papers on polyfolds by H. Hofer and his co-workers.} \\  
 
Let us now briefly introduce the algebraic formalism of SFT as described in \cite{EGH}: \\
 
Recall that a multiply-covered Reeb orbit $\gamma^k$ is called bad if 
$\CZ(\gamma^k)\neq\CZ(\gamma)\mod 2$, where $\CZ(\gamma)$ denotes the 
Conley-Zehnder index of $\gamma$. Calling a Reeb orbit $\gamma$ {\it good} if it is not bad we assign to every 
good Reeb orbit $\gamma$ two formal graded variables $p_{\gamma},q_{\gamma}$ with grading 
\begin{equation*} 
|p_{\gamma}|=m-3-\CZ(\gamma),|q_{\gamma}|=m-3+\CZ(\gamma) 
\end{equation*} 
when $\dim V = 2m-1$. Assuming we have chosen a basis $A_0,\ldots,A_N$ of $H_2(V)$, we assign to every $A_i$ a formal 
variables $z_i$ with grading $|z_i|=- 2 c_1(A_i)$. In order to include higher-dimensional moduli spaces we further assume that a string 
of closed (homogeneous) differential forms $\Theta=(\theta_1,\ldots,\theta_N)$ on $V$ is chosen and assign to 
every $\theta_{\alpha}\in\Omega^*(V)$ a formal variables $t^{\alpha}$ 
with grading
\begin{equation*} |t^{\alpha}|=2 -\deg\theta_{\alpha}. \end{equation*}  
Finally, let $\hbar$ be another formal variable of degree $|\hbar|=2(m-3)$. \\

Let $\WW$ be the graded Weyl algebra over $\IC$ of power series in the variables 
$\hbar,p_{\gamma}$ and $t_i$ with coefficients which are polynomials in the 
variables $q_{\gamma}$ and Laurent series in $z_n$, which is equipped with the associative product $\star$ in 
which all variables super-commute according to their grading except for the 
variables $p_{\gamma}$, $q_{\gamma}$ corresponding to the same Reeb orbit $\gamma$, 
\begin{equation*} [p_{\gamma},q_{\gamma}] = 
                  p_{\gamma}\star q_{\gamma} -(-1)^{|p_{\gamma}||q_{\gamma}|} 
                  q_{\gamma}\star p_{\gamma} = \kappa_{\gamma}\hbar.
\end{equation*}
($\kappa_{\gamma}$ denotes the multiplicity of $\gamma$.) Since it is shown in \cite{EGH} that the bracket 
of two elements in $\WW$ gives an element in $\hbar\WW$, it follows that we get a bracket on the module 
$\hbar^{-1}\WW$. Following \cite{EGH} we further introduce 
the Poisson algebra $\PP$ of formal power series in the variables $p_{\gamma}$ and $t_i$ with   
coefficients which are polynomials in the variables $q_{\gamma}$ and Laurent series in $z_n$ with Poisson bracket given by 
\begin{equation*} 
 \{f,g\} = \sum_{\gamma}\kappa_{\gamma}\Bigl(\frac{\del f}{\del p_{\gamma}}\frac{\del g}{\del q_{\gamma}} -
                          (-1)^{|f||g|}\frac{\del g}{\del p_{\gamma}}\frac{\del f}{\del q_{\gamma}}\Bigr).       
\end{equation*}

As in Gromov-Witten theory we want to organize all moduli spaces $\CM_{g,r,A}(\Gamma^+,\Gamma^-)$
into a generating function $\IH\in\hbar^{-1}\WW$, called {\it Hamiltonian}. In order to include also higher-dimensional 
moduli spaces, in \cite{EGH} the authors follow the approach in Gromov-Witten theory to integrate the chosen differential forms 
$\theta_{\alpha}$ over the moduli spaces after pulling them back under the evaluation map from target manifold $V$. 
The Hamiltonian $\IH$ is then defined by
\begin{equation*}
 \IH = \sum_{\Gamma^+,\Gamma^-} \int_{\CM_{g,r,A}(\Gamma^+,\Gamma^-)/\IR}
 \ev_1^*\theta_{\alpha_1}\wedge\ldots\wedge\ev_r^*\theta_{\alpha_r}\; \hbar^{g-1}t^\alpha p^{\Gamma^+}q^{\Gamma^-}z^d
\end{equation*}
with $t^{I}=t^{\alpha_1}\ldots t^{\alpha_r}$, $p^{\Gamma^+}=p_{\gamma^+_1}\ldots p_{\gamma^+_{n^+}}$, 
$q^{\Gamma^-}=q_{\gamma^-_1}\ldots q_{\gamma^-_{n^-}}$ and $z^d = z_0^{d_0} \cdot \ldots \cdot z_N^{d_N}$. 
Expanding 
\begin{equation*} \IH=\hbar^{-1}\sum_g \IH_g \hbar^g \end{equation*} 
we further get a rational Hamiltonian $\Ih=\IH_0\in\PP$, which counts only curves with genus zero. \\

While the Hamiltonian $\IH$ explicitly depends on the chosen contact form, the cylindrical almost complex structure, 
the differential forms and abstract polyfold perturbations making all moduli spaces regular, it is outlined in \cite{EGH} 
how to construct algebraic invariants, which just depend on the contact structure and the cohomology classes of the 
differential forms.
 
\vspace{0.5cm}

\subsection{Gravitational descendants}
In complete analogy to Gromov-Witten theory we can introduce $r$ tautological line 
bundles $\LL_1,\ldots,\LL_r$ over each moduli space $\CM_r=\CM_{g,r,A}(\Gamma^+,\Gamma^-)/\IR$ , where the fibre of $\LL_i$ 
over a punctured curve $(u,\Si)\in\CM_r$ is again given 
by the cotangent line to the underlying, possibly unstable nodal Riemann surface (without ghost components) at the 
$i$.th marked point and which again formally can be defined as the pull-back of the vertical cotangent line 
bundle of $\pi: \CM_{r+1}\to\CM_r$ under the canonical section $\sigma_i: \CM_r\to\CM_{r+1}$ mapping to the $i$.th marked 
point in the fibre. Note again that while the vertical cotangent line bundle is rather a sheaf (the dualizing sheaf) than a true bundle since 
it becomes singular at the nodes in the fibres, the pull-backs under the canonical sections are still true line bundles 
as the marked points are different from the nodes and hence these sections avoid the singular loci. \\

While in Gromov-Witten theory the gravitational descendants were defined by integrating powers of the first Chern class 
of the tautological line bundle over the moduli space, which by Poincare duality corresponds to counting common zeroes of 
sections in this bundle, in symplectic field theory, more generally every holomorphic curves theory where curves with 
punctures and/or boundary are considered, we are faced with the problem that the moduli spaces generically have 
codimension-one boundary, so that the count of zeroes of sections in general depends on the chosen sections in the 
boundary. It follows that the integration of the first Chern class of the tautological line bundle over a single moduli 
space has to be replaced by a construction involving all moduli space at once. Note that this is similar to the choice of 
coherent abstract perturbations for the moduli spaces in symplectic field theory in order to achieve transversality for 
the Cauchy-Riemann operator. \\

Keeping the interpretation of descendants as common zero sets of sections in powers of the 
tautological line bundles, the first author defined in his paper \cite{F2} the notion of {\it coherent collections of sections} 
$(s)$ in the tautological line bundles over all moduli spaces, which just formalizes how the sections chosen for the 
lower-dimensional moduli spaces should affect the section chosen for a moduli spaces on its boundary. Based on this he then 
defined {\it descendants of moduli spaces} $\CM^j\subset\CM$, which were obtained inductively as zero sets of these coherent 
collections of sections $(s_j)$ in the tautological line bundles over the descendant moduli spaces $\CM^{j-1}\subset\CM$. \\

So far we have only considered the case with one additional marked point. On the other hand, as already outlined in \cite{F2}, 
the general case with $r$ additional marked points is just notationally more involved. Indeed, we can 
easily define for every moduli space $\CM_r=\CM_{g,r,A}(\Gamma^+,\Gamma^-)/\IR$ with $r$ additional marked points and every 
$r$-tuple of natural numbers $(j_1,\ldots,j_r)$ descendants $\CM^{(j_1,\ldots,j_r)}_r\subset\CM_r$ by setting
\begin{equation*} \CM^{(j_1,\ldots,j_r)}_r = \CM^{(j_1,0,\ldots,0)}_r\cap \ldots \cap \CM^{(0,\ldots,0,j_r)}_r, \end{equation*}
where the descendant moduli spaces $\CM^{(0,\ldots,0,j_i,0,\ldots,0)}_r\subset\CM_r$ are defined in the same way as the 
one-point descendant moduli spaces $\CM^{j_i}_1\subset\CM_1$ by looking at the $r$ tautological line bundles $\LL_{i,r}$ 
over the moduli space $\CM_r = \CM_r(\Gamma^+,\Gamma^-)/\IR$ separately. In other words, we inductively choose generic 
sections $s^j_{i,r}$ in the line bundles $\LL_{i,r}^{\otimes j}$ to define $\CM^{(0,\ldots,0,j,0,\ldots,0)}_r=
(s^j_{i,r})^{-1}(0)\subset\CM^{(0,\ldots,0,j-1,0,\ldots,0)}_r\subset\CM_r$.  \\

With this we can define the descendant Hamiltonian of SFT, which we will continue denoting by $\IH$, while the 
Hamiltonian defined in \cite{EGH} will from now on be called {\it primary}. In order to keep track of the descendants we 
will assign to every chosen differential form $\theta_\alpha$ now a sequence of formal variables $t^{\alpha,j}$ with grading 
\begin{equation*} |t^{\alpha,j}|=2(1-j) -\deg\theta_\alpha. \end{equation*} 
Then the descendant Hamiltonian $\IH\in\hbar^{-1}\WW$ of SFT is defined by 
\begin{equation*}
 \IH = \sum_{\Gamma^+,\Gamma^-,I} \int_{\CM^{(j_1,\ldots,j_r)}_{g,r,A}(\Gamma^+,\Gamma^-)/\IR}
 \ev_1^*\theta_{\alpha_1}\wedge\ldots\wedge\ev_r^*\theta_{\alpha_r}\; \hbar^{g-1}t^Ip^{\Gamma^+}q^{\Gamma^-},
\end{equation*}
where $p^{\Gamma^+}=p_{\gamma^+_1} \ldots p_{\gamma^+_{n^+}}$, $q^{\Gamma^-}=q_{\gamma^-_1} \ldots q_{\gamma^-_{n^-}}$ and 
$t^I=t^{\alpha_1,j_1} \ldots t^{\alpha_r,j_r}$.\\

Expanding 
\begin{equation*} \IH=\hbar^{-1}\sum_g \IH_g \hbar^g \end{equation*} 
we further get a rational Hamiltonian $\Ih=\IH_0\in\PP$, which counts only curves with genus zero. 

\vspace{0.5cm}

\subsection{Quantum Hamiltonian systems with symmetries}
We want to emphasize that the following statement is not yet a theorem in the strict mathematical sense as the analytical 
foundations of symplectic field theory, in particular, the neccessary transversality theorems for the Cauchy-Riemann 
operator, are not yet fully established. Since it can be expected that the polyfold project by Hofer and his 
collaborators sketched in \cite{HWZ} will provide the required transversality theorems, we follow other papers in the field in 
proving everything up to transversality and state it nevertheless as a theorem. 

\begin{theorem} Differentiating the Hamiltonian $\IH\in\hbar^{-1}\WW$ with respect to the formal variables $t_{\alpha,p}$ 
defines a sequence of quantum Hamiltonians
\begin{equation*} \IH_{\alpha,p}=\frac{\del\IH}{\del t^{\alpha,p}} \in H_*(\hbar^{-1}\WW,[\IH,\cdot]) \end{equation*} 
{\it in the full SFT homology algebra with differential $D=[\IH,\cdot]: \hbar^{-1}\WW\to\hbar^{-1}\WW$, which commute with respect to the 
bracket on $H_*(\hbar^{-1}\WW,[\IH,\cdot])$,} 
\begin{equation*} [\IH_{\alpha,p},\IH_{\beta,q}] = 0,\; (\alpha,p),(\beta,q)\in\{1,\ldots,N\}\times\IN. \end{equation*}
\end{theorem}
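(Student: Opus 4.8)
The plan is to reduce the statement to two algebraic facts about the (full, descendant) Hamiltonian $\IH\in\hbar^{-1}\WW$: first, that $\IH$ satisfies the master equation $[\IH,\IH]=0$, and second, that $\IH$ depends on the descendant variables $t^{\alpha,p}$ in a way compatible with the graded Jacobi identity on $\hbar^{-1}\WW$. The master equation $[\IH,\IH]=0$ is the algebraic incarnation of the codimension-one boundary structure of the compactified moduli spaces $\CM^{(j_1,\dots,j_r)}_{g,r,A}(\Gamma^+,\Gamma^-)$: the only boundary strata are two-level SFT buildings, so Stokes' theorem applied to $\ev_1^*\theta_{\alpha_1}\wedge\cdots\wedge\ev_r^*\theta_{\alpha_r}$ over the compactified moduli spaces, together with the fact that the coherent collections of sections $(s_j)$ defining the descendants are chosen to restrict correctly to the boundary, yields precisely $[\IH,\IH]=0$ in $\hbar^{-1}\WW$. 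This is exactly the argument given in \cite{EGH} for the primary Hamiltonian, and in \cite{F2} it is checked that the coherence condition on the sections is what allows the same argument to go through with descendants. I would invoke this as known from the cited work rather than reprove it.

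Granting $[\IH,\IH]=0$, the operator $D=[\IH,\cdot]\colon\hbar^{-1}\WW\to\hbar^{-1}\WW$ squares to zero: indeed $D^2 f=[\IH,[\IH,f]]=\tfrac12[[\IH,\IH],f]=0$ by the graded Jacobi identity (up to the standard sign bookkeeping, using that $|\IH|$ is even). Hence $(\hbar^{-1}\WW,D)$ is a differential graded algebra and $H_*(\hbar^{-1}\WW,D)$ inherits the bracket $[\cdot,\cdot]$ from $\WW$, since $D$ is a graded derivation of $\star$ and therefore of $[\cdot,\cdot]$; this is what makes the statement ``$\IH_{\alpha,p}\in H_*(\hbar^{-1}\WW,D)$'' meaningful. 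Next I would show that $\IH_{\alpha,p}=\partial\IH/\partial t^{\alpha,p}$ is a $D$-cocycle: differentiating $[\IH,\IH]=0$ with respect to $t^{\alpha,p}$ and using that $\partial/\partial t^{\alpha,p}$ commutes with the bracket (the $t$-variables are central and the bracket only involves $\partial/\partial p_\gamma$, $\partial/\partial q_\gamma$) gives $[\IH_{\alpha,p},\IH]+(-1)^{\ast}[\IH,\IH_{\alpha,p}]=0$, i.e. $[\IH,\IH_{\alpha,p}]=0$, which is $D\IH_{\alpha,p}=0$. So each $\IH_{\alpha,p}$ defines a class in $H_*(\hbar^{-1}\WW,D)$.

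For commutativity, I would again differentiate the master equation, now twice: applying $\partial^2/\partial t^{\alpha,p}\,\partial t^{\beta,q}$ to $[\IH,\IH]=0$ and using the Leibniz rule for the bracket with respect to the (central) $t$-variables yields, up to sign, $[\IH_{\alpha,p},\IH_{\beta,q}]+[\IH,\IH_{\alpha,p,\beta,q}]=0$, where $\IH_{\alpha,p,\beta,q}=\partial^2\IH/\partial t^{\alpha,p}\partial t^{\beta,q}$. The second term is $D(\pm\IH_{\alpha,p,\beta,q})$, hence exact, so $[\IH_{\alpha,p},\IH_{\beta,q}]=0$ in $H_*(\hbar^{-1}\WW,D)$. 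I expect the main obstacle to be purely bookkeeping: getting all the Koszul signs right in the repeated differentiation of $[\IH,\IH]=0$, since $\IH$, the $\IH_{\alpha,p}$, and the mixed derivatives carry gradings determined by $|t^{\alpha,p}|=2(1-p)-\deg\theta_\alpha$, and the bracket is graded. One should also note the parallel with the Gromov-Witten WDVV/TRR derivation and with \cite{FR}, where the same ``differentiate the master equation'' mechanism produces the string, dilaton and divisor equations; here it produces commutativity of the descendant flows. Everything rests on $[\IH,\IH]=0$, which in turn rests on the still-incomplete polyfold transversality package, so the theorem is stated, as elsewhere in the paper, modulo those foundations.
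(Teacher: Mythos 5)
Your proposal follows the paper's proof essentially verbatim: everything is reduced to the master equation $[\IH,\IH]=0$ (quoted from \cite{F2} and \cite{EGH} rather than reproved), a single differentiation in $t^{\alpha,p}$ with the graded Leibniz rule gives $[\IH,\IH_{\alpha,p}]=0$ so that each $\IH_{\alpha,p}$ is a $D$-cocycle, and a second differentiation gives $[\IH_{\alpha,p},\IH_{\beta,q}]+(-1)^{|t^{\alpha,p}|}[\IH,\del^2\IH/\del t^{\alpha,p}\del t^{\beta,q}]=0$, whence commutativity after passing to homology. One small correction to your parenthetical sign remark: the bookkeeping works because all summands of $\IH$ have \emph{odd} degree (not even, as you wrote) --- this parity is exactly what makes $D^2=\tfrac12[[\IH,\IH],\cdot]$ and the displayed double-differentiation identity come out correctly, and the paper states it explicitly.
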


Everything is an immediate consequence of the master equation $[\IH,\IH]=0$, which can be proven in the same 
way as in the case without descendants using the results in \cite{F2}. While the boundary equation $D\circ D=0$ is well-known 
to follow directly from the identity $[\IH,\IH]=0$, the fact that every $\IH_{\alpha,p}$, $(\alpha,p)\in
\{1,\ldots,N\}\times\IN$ defines an element in the homology $H_*(\hbar^{-1}\WW,[\IH,\cdot])$ follows from the identity
\begin{equation*} [\IH,\IH_{\alpha,p}] = 0,\end{equation*} 
which can be shown by differentiating the master equation with respect to the $t^{\alpha,p}$-variable and using the 
graded Leibniz rule,
\[ \frac{\del}{\del t^{\alpha,p}} [f,g] = 
   [\frac{\del f}{\del t^{\alpha,p}},g] + (-1)^{|t^{\alpha,p}||f|} [f,\frac{\del g}{\del t^{\alpha,p}}]. \]
On the other hand, in order to see that any two $\IH_{\alpha,p}$, 
$\IH_{\beta,q}$ commute {\it after passing to homology} it suffices to see that by differentiating twice (and using that all 
summands in $\IH$ have odd degree) we get the identity
\begin{equation*} 
 [\IH_{\alpha,p},\IH_{\beta,q}]+(-1)^{|t^{\alpha,p}|}[\IH,\frac{\del^2\IH}{\del t^{\alpha,p}\del t^{\beta,q}}] = 0. 
\end{equation*} 

By replacing the full Hamiltonian $\IH$ by the rational Hamiltonian $\Ih$ counting only curves with genus zero, we get a rational version of the above theorem as a corollary.\\

\begin{corollary} Differentiating the rational Hamiltonian $\Ih\in\PP$ with respect to the formal variables $t_{\alpha,p}$ 
defines a sequence of classical Hamiltonians 
\begin{equation*} \Ih_{\alpha,p}=\frac{\del\Ih}{\del t^{\alpha,p}} \in H_*(\PP,\{\Ih,\cdot\}) \end{equation*} 
{\it in the rationall SFT homology algebra with differential $d=\{\Ih,\cdot\}: \PP\to\PP$, which commute with respect to the 
bracket on $H_*(\PP,\{\Ih,\cdot\})$,} 
\begin{equation*} \{\Ih_{\alpha,p},\Ih_{\beta,q}\} = 0,\; (\alpha,p),(\beta,q)\in\{1,\ldots,N\}\times\IN. \end{equation*}
\end{corollary}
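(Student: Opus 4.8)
The plan is to deduce the rational (genus-zero) statement directly from the full statement of the Theorem by extracting the top-order term in $\hbar$. First I would recall that, by definition, $\IH = \hbar^{-1}\sum_g \IH_g\hbar^g$ and $\Ih = \IH_0\in\PP$, and that under this expansion the Weyl bracket $[\cdot,\cdot]$ on $\hbar^{-1}\WW$ reduces, in its leading $\hbar$-order, to the Poisson bracket $\{\cdot,\cdot\}$ on $\PP$: writing $f = \hbar^{-1}\sum f_g\hbar^g$ and $g = \hbar^{-1}\sum g_h\hbar^h$, one has $[f,g] = \hbar^{-1}\bigl(\{f_0,g_0\} + O(\hbar)\bigr)$, which is the classical limit manifest in the commutator relation $[p_\gamma,q_\gamma]=\kappa_\gamma\hbar$ versus $\{p_\gamma,q_\gamma\}=\kappa_\gamma$. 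Thus the master equation $[\IH,\IH]=0$ yields, in lowest order in $\hbar$, the \emph{rational master equation} $\{\Ih,\Ih\}=0$; this is the genus-zero analog, already used implicitly in \cite{F2} and \cite{FR}.

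Next I would carry out the same three formal manipulations as in the proof of the Theorem, but now inside $\PP$. From $\{\Ih,\Ih\}=0$ and the graded Leibniz rule for $\del/\del t^{\alpha,p}$ applied to the Poisson bracket,
\[
 \frac{\del}{\del t^{\alpha,p}}\{f,g\} = \bigl\{\tfrac{\del f}{\del t^{\alpha,p}},g\bigr\} + (-1)^{|t^{\alpha,p}||f|}\bigl\{f,\tfrac{\del g}{\del t^{\alpha,p}}\bigr\},
\]
one gets $\{\Ih,\Ih_{\alpha,p}\}=0$, so that $d\Ih_{\alpha,p}=\{\Ih,\Ih_{\alpha,p}\}=0$ and each $\Ih_{\alpha,p}$ is a cycle, hence defines a class in $H_*(\PP,\{\Ih,\cdot\})$. (One also recovers $d\circ d=0$ from $\{\Ih,\Ih\}=0$ together with the graded Jacobi identity for $\{\cdot,\cdot\}$.) Differentiating the rational master equation a second time, with respect to $t^{\beta,q}$, and using that every monomial appearing in $\Ih$ has odd total degree (so the mixed sign bookkeeping goes through exactly as in the quantum case), I obtain the identity
\[
 \{\Ih_{\alpha,p},\Ih_{\beta,q}\} + (-1)^{|t^{\alpha,p}|}\bigl\{\Ih,\tfrac{\del^2\Ih}{\del t^{\alpha,p}\del t^{\beta,q}}\bigr\} = 0.
\]
The second term is $d$ of the element $(-1)^{|t^{\alpha,p}|}\del^2\Ih/\del t^{\alpha,p}\del t^{\beta,q}\in\PP$, hence exact, so $\{\Ih_{\alpha,p},\Ih_{\beta,q}\}=0$ in $H_*(\PP,\{\Ih,\cdot\})$, which is the claimed commutativity.

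The main point to be careful about — the only place this is more than a transcription — is the passage from the Weyl bracket to the Poisson bracket in the top $\hbar$-order, i.e. that restricting the identities of the Theorem from $\hbar^{-1}\WW$ to their leading coefficients in $\PP$ is compatible with taking homology: one must check that the $\hbar\to 0$ reduction sends the differential $D=[\IH,\cdot]$ to $d=\{\Ih,\cdot\}$ on the leading term and that exactness upstairs descends to exactness downstairs. This is routine once one notes that $\Ih$ collects precisely the genus-zero curves and that, for genus-zero curves, each SFT building block contributes at the bottom power of $\hbar$, so no genus-zero information is lost in the reduction; everything else is the graded Leibniz/Jacobi formalism already invoked above. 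Alternatively, and perhaps more cleanly, one can simply repeat the proof of the Theorem verbatim with $(\WW,[\cdot,\cdot])$ replaced throughout by $(\PP,\{\cdot,\cdot\})$ and $\IH$ by $\Ih$, citing $\{\Ih,\Ih\}=0$ from \cite{F2} as the rational master equation; then there is no limiting argument at all and the corollary is immediate.
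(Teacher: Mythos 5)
Your proposal is correct, and its closing alternative --- repeating the proof of the Theorem verbatim with $(\WW,[\cdot,\cdot])$ replaced by $(\PP,\{\cdot,\cdot\})$ and $\IH$ by $\Ih$, starting from the rational master equation $\{\Ih,\Ih\}=0$ --- is exactly how the paper obtains the corollary. Your primary route via the leading-order-in-$\hbar$ reduction of the Weyl bracket to the Poisson bracket is a valid equivalent packaging of the same fact, but adds nothing beyond the direct substitution.
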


We now turn to the question of independence of these nice algebraic structures from the choices like contact form, 
cylindrical almost complex structure, abstract polyfold perturbations and, of course, the choice of the coherent 
collection of sections. This is the content of the following theorem, where we however again want to emphasize that 
the following statement is not yet a theorem in the strict mathematical sense as the analytical foundations of symplectic 
field theory, in particular, the neccessary transversality theorems for the Cauchy-Riemann operator, are not yet fully 
established.   

\begin{theorem} For different choices of contact form $\lambda^{\pm}$, cylindrical almost complex structure 
$\Ju^{\pm}$ , abstract polyfold perturbations and sequences of coherent collections of sections $(s^{\pm}_j)$ the 
resulting systems of commuting operators $\IH^-_{\alpha,p}$ on $H_*(\hbar^{-1}\WW^-,D^-)$ and $\IH^+_{\alpha,p}$ on 
$H_*(\hbar^{-1}\WW^+,D^+)$ are isomorphic, i.e. there exists an isomorphism of the Weyl algebras $H_*(\hbar^{-1}\WW^-,D^-)$ 
and $H_*(\hbar^{-1}\WW^+,D^+)$ which maps $\IH^-_{\alpha,p}\in H_*(\hbar^{-1}\WW^-,D^-)$ to 
$\IH^+_{\alpha,p}\in H_*(\hbar^{-1}\WW^+,D^+)$. 
\end{theorem}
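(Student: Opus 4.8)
The plan is to adapt the standard cobordism/continuation argument from symplectic field theory (as in \cite{EGH} for the primary case) to the descendant setting, the key new point being that the tautological line bundles and the chosen coherent collections of sections must be transported across the cobordism in a compatible way. First I would recall that any two choices $(\lambda^-,\Ju^-)$ and $(\lambda^+,\Ju^+)$ of stable Hamiltonian structure and cylindrical almost complex structure can be connected by a (non-cylindrical) almost complex structure $\bar J$ on the symplectization $\IR\times V$ interpolating between them near $\pm\infty$. Counting $\bar J$-holomorphic curves in this cobordism, with differential forms pulled back by the evaluation maps and integrated as before, defines a potential $\IF \in \hbar^{-1}\WW^+\otimes\WW^-$ (more precisely an element of the appropriate completed tensor algebra), and the usual SFT compactness-and-gluing analysis shows that $\IF$ satisfies the equation expressing that the associated map $\Phi = \,\text{``}e^{\IF}\text{''}$ (in the Weyl-algebra sense, exponentiating with respect to the $p^-,q^+$ variables) intertwines $D^-$ and $D^+$, i.e. is a chain map $(\hbar^{-1}\WW^-,D^-)\to(\hbar^{-1}\WW^+,D^+)$ inducing an isomorphism on homology.

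The descendant refinement proceeds exactly as in \cite{F2}: over each moduli space $\CM_r$ of curves in the cobordism one has the $r$ tautological line bundles $\LL_{i,r}$, and one chooses a coherent collection of sections $(s_j^{\mathrm{cob}})$ whose restriction to the boundary strata of $\CM_r$ is determined by the sections already chosen on the cylindrical ends — that is, by $(s_j^-)$ on the $\IR\times V$-level curves limiting to the negative end and by $(s_j^+)$ on those limiting to the positive end — together with the sections on the lower-dimensional cobordism moduli spaces. Such a coherent extension exists because the relevant extension problem is the same contractible-choices problem solved in \cite{F2}, only now over cobordism moduli spaces rather than symplectization moduli spaces. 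Cutting down $\CM_r$ by these sections yields descendant moduli spaces $\CM_r^{(j_1,\ldots,j_r)}$ in the cobordism, and integrating $\ev_1^*\theta_{\alpha_1}\wedge\cdots\wedge\ev_r^*\theta_{\alpha_r}$ over them with the variables $t^{\alpha,j}$ produces the descendant potential $\IF$. Differentiating the resulting intertwining relation with respect to $t^{\alpha,p}$, exactly as in the proof of Theorem~0.? (the commutation theorem) in the present excerpt, shows that the chain map $\Phi$ carries $\IH^-_{\alpha,p}$ to $\IH^+_{\alpha,p}$ modulo terms of the form $[\IH^+,\cdot]$ and $\Phi\circ[\IH^-,\cdot]$, hence that the induced isomorphism on homology matches the two families of commuting operators.

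The last step is to upgrade ``chain map inducing an isomorphism on homology'' to ``isomorphism of Weyl algebras'': one composes the cobordism from $+$ to $-$ with the reverse cobordism and uses a homotopy-of-cobordisms argument (gluing in a long neck, again standard SFT machinery) to see that the two composite maps are chain-homotopic to the respective identities, so $\Phi$ is invertible up to homotopy; since $\Phi$ is, by construction, an algebra homomorphism for the $\star$-product (counts of curves in a cobordism are multiplicative under disjoint union of the asymptotic orbit collections), the induced map on $H_*(\hbar^{-1}\WW^\pm,D^\pm)$ is an algebra isomorphism. Passing to genus zero gives the corresponding statement for the rational Hamiltonians $\Ih^\pm$ and the Poisson algebras $H_*(\PP^\pm,\{\Ih^\pm,\cdot\})$.

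I expect the main obstacle to be the coherence of the section choices \emph{across} the cobordism: one must check that the codimension-one boundary of a cobordism descendant moduli space decomposes into pieces on which the a priori given data ($(s_j^-)$, $(s_j^+)$, and lower-dimensional cobordism sections) actually determine a section, with no consistency clash at the corners where several such strata meet, and that the resulting potential $\IF$ depends on the auxiliary choices only up to the homotopies needed for the invertibility argument. This is the analogue, in the descendant setting, of the coherent-perturbation bookkeeping already carried out in \cite{F2} for a single symplectization, and as there I would organize it by induction on the dimension of the moduli spaces; modulo the still-unestablished polyfold transversality (which, following the convention of the paper, I treat as a black box), this bookkeeping is the only genuinely new analytic input.
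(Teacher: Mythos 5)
Your proposal is correct and follows essentially the same route as the paper: the paper's entire proof consists of invoking the standard SFT cobordism/continuation argument, with the single new ingredient being the \emph{coherently connecting} collections of sections in the tautological bundles over the cobordism moduli spaces introduced in \cite{F2}, which is precisely the point you identify and elaborate. The additional details you supply (the intertwining relation for the cobordism potential, the composition with the reverse cobordism, and the descent to the rational/Poisson case) are the standard machinery the paper leaves implicit.
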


For the proof observe that in \cite{F2} the first author introduced the notion of a collection of sections $(s_j)$ in the 
tautological line bundles over all moduli spaces of holomorphic curves in the cylindrical cobordism interpolating between the
auxiliary structures which are {\it coherently connecting} the two coherent collections of sections $(s^{\pm}_j)$. \\ 

We again get a rational version of the above theorem as a corollary.

\begin{corollary} For different choices of contact form $\lambda^{\pm}$, cylindrical almost complex structure 
$\Ju^{\pm}$ , abstract polyfold perturbations and sequences of coherent collections of sections $(s^{\pm}_j)$ the 
resulting systems of commuting functions $\Ih^-_{\alpha,p}$ on $H_*(\PP^-,d^-)$ and $\Ih^+_{\alpha,p}$ on 
$H_*(\PP^+,d^+)$ are isomorphic, i.e. there exists an isomorphism of the Poisson algebras $H_*(\PP^-,d^-)$ 
and $H_*(\PP^+,d^+)$ which maps $\Ih^-_{\alpha,p}\in H_*(\PP^-,d^-)$ to 
$\Ih^+_{\alpha,p}\in H_*(\PP^+,d^+)$. 
\end{corollary}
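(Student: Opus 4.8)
The plan is to deduce the corollary from the preceding theorem by passing to the genus-zero part, just as the first corollary of this section was deduced from the first theorem and as all the rational statements here are obtained from their full counterparts. Recall that the proof of the theorem rests on a cylindrical cobordism interpolating between the two sets of auxiliary data $(\lambda^-,\Ju^-,\ldots)$ together with $(s^-_j)$ and $(\lambda^+,\Ju^+,\ldots)$ together with $(s^+_j)$, equipped with a compatible almost complex structure, abstract polyfold perturbations, and a collection of sections $(s_j)$ in the tautological line bundles over all moduli spaces of curves in the cobordism which \emph{coherently connects} the two coherent collections $(s^\pm_j)$ in the sense of \cite{F2}. Counting these curves, with the descendant constraints imposed by the $(s_j)$ and with the chosen differential forms integrated over, assembles into a cobordism potential $\IF\in\hbar^{-1}\WW$ that also depends on the descendant variables $t^{\alpha,j}$, and the compatibility of all data at the two cylindrical ends is encoded in a composition identity expressing that conjugation by $e^{\IF}$ is a chain isomorphism intertwining $D^+$ with $D^-$. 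Differentiating that identity with respect to $t^{\alpha,p}$ and using the graded Leibniz rule for the bracket, exactly as in the proof of the first theorem of this section, shows that the induced isomorphism of Weyl homology algebras carries $\IH^-_{\alpha,p}$ to $\IH^+_{\alpha,p}$; this is the preceding theorem.

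For the rational statement I would expand $\IF=\hbar^{-1}\sum_g\IF_g\hbar^g$ and retain only $\If:=\IF_0$, the contribution of genus-zero curves, which lies in $\PP$. In the semiclassical limit $\tfrac1\hbar[\cdot,\cdot]\to\{\cdot,\cdot\}$, conjugation by $e^{\IF}$ degenerates to the time-one Hamiltonian flow $\phi$ of $\If$ with respect to the Poisson bracket on $\PP$ (with $\Ih^-$ and $\Ih^+$ written in the common $p,q$-variables at the two ends), and the composition identity degenerates to the classical statement that $\phi$ is a Poisson map intertwining $d^-=\{\Ih^-,\cdot\}$ with $d^+=\{\Ih^+,\cdot\}$, hence descends to a Poisson isomorphism $H_*(\PP^-,d^-)\to H_*(\PP^+,d^+)$. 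Differentiating this classical identity with respect to $t^{\alpha,p}$ and invoking the Leibniz rule for $\{\cdot,\cdot\}$, precisely as in the proof of the first corollary of this section, then shows that the Poisson isomorphism sends $\Ih^-_{\alpha,p}$ to $\Ih^+_{\alpha,p}$. That this map is a genuine isomorphism of Poisson algebras on homology (with inverse induced by the reversed cobordism, and with independence of the auxiliary choices inside the cobordism) is obtained by the standard SFT argument of composing the cobordism with its reverse and with a homotopy of cobordisms, the relevant coherence of sections again being handled by the technology of \cite{F2} applied one dimension up.

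The step I expect to be the real obstacle is not this formal bookkeeping but the analytic and combinatorial compatibility it rests on: one must verify that the coherent collection of sections over the cobordism moduli spaces restricts correctly to every boundary stratum of broken configurations — including strata containing trivial cylinders that carry marked points, where the tautological line bundles behave delicately — so that the descendant construction of \cite{F2} is genuinely compatible with SFT compactness and gluing in the cobordism setting, and so that this compatibility survives the genus-zero truncation and no higher-genus correction leaks into the Poisson (as opposed to Weyl) relation. Granting the transversality package promised by the polyfold program of \cite{HWZ}, everything above reduces to bookkeeping modelled on the non-descendant cobordism arguments of \cite{EGH} and the descendant constructions of \cite{F2}.
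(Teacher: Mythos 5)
Your proposal is correct and follows essentially the same route as the paper, which simply obtains the corollary as the rational (genus-zero) version of the preceding theorem, using the cobordism with a coherently connecting collection of sections in the sense of \cite{F2}. You supply considerably more detail on the mechanism (the cobordism potential, its semiclassical degeneration to a canonical transformation generated by $\If$, and the composition with the reversed cobordism) than the paper does, but this is exactly the standard \cite{EGH} argument the paper is implicitly invoking.
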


These invariance results actually mean that SFT attaches a Weyl (or Poisson) algebra and (quantum) Hamiltonian system therein (in a coordinate free way, i.e. up to algebra isomorphisms) to a stable Hamiltonian structure. Focusing on the rational case for language simplicity we want to point out the fact that \emph{the Poisson SFT homology algebra can be thought of as the space of functions on some abstract infinite-dimensional Poisson space.} The nature of such space can be in fact very exhotic, and the description via space of functions remains the only one with full meaning. \\

However a dynamical interpretation of the Poisson space where the dynamics of the SFT-Hamiltonian system takes place is possible. The kernel $\ker(\{\Ih,\cdot\})$ can be seen as the algebra of functions on the space $\mathcal{O}$ of orbits of the Hamiltonian $\IR$-action given by $\Ih$ (the flow lines of the Hamiltonian vector field $X_{\Ih}$ associated to $\Ih$). Even in a finite dimensional setting the space $\mathcal{O}$ can be very wild. Anyhow the image $\text{im}(\{\Ih,\cdot\})$ is an ideal of such algebra and hence identifies a sub-space of $\mathcal{O}$ given by all of those orbits $o\in \mathcal{O}$ at which, for any $f\in\PP$, $\{\Ih,f\}|_{o}=0$ (notice that, since $\{\Ih,\Ih\}=0$, $\{\Ih,f\}$ descends to a function on $\mathcal{O}$). But such orbits are simply the constant ones, where $X_{\Ih}$ vanishes. \\

Hence the Poisson SFT-homology algebra $H_*(\PP,\{\Ih,\cdot\})$ can regarded as the algebra of functions on $X_{\Ih}^{-1}(0)$, seen as a subspace of the space $\mathcal{O}$ of orbits of $\Ih$, endowed with a Poisson structure by singular, stationary reduction.

\vspace{0.5cm}

\subsection{Integrable hierarchies of Gromov-Witten theory and local SFT}

The above beautiful theory of Hamiltonian systems from Symplectic Field Theory completely contains the older one, coming from rational Gromov-Witten theory (see e.g. \cite{DZ} for a comprehensive treatment), and in some sense even clarifies its topological origin.\\

The special case of stable Hamiltonian structure capturing the full rational Gromov-Witten theory of a closed symplectic manifold $M$ consists in the trivial circle bundle $V=S^1 \times M$. It was noted in \cite{EGH} for all circle bundles over $M$ that the rational SFT-potential is expressed in terms of the rational Gromov-Witten potential of $M$. In the special case of the trivial bundle, the full dispersionless integrable system structure from the Gromov-Witten theory of $M$ is reproduced as the above hamiltonian system of SFT (which is hence integrable, in this case).\\

In particular, further structures than just the Hamiltonian nature are inherited by the SFT-system in the trivial bundle case (and partly in the circle bundle case). In \cite{R2} the second author explicitly described how bihamiltonian and tau-structure are expressed in SFT terms, giving a complete parallelism with Dubrovin's theory of Frobenius manifolds \cite{DZ}. It is a well known fact that one can reconstruct the full descendant genus $0$ Gromov-Witten potential from the primary one by first finding the one-descendant potential, corresponding to the tau-symmetric set of commuting hamiltonians for the relevant integrable system, and then using Witten's conjecture in genus $0$, in its generalized form by Dubrovin, which identifies the full descendant rational potential with the tau-function of a specific solution to such integrable system. This approach has a far reaching extension to higher genera, culminating in Dubrovin's reconstruction method for the full genus $g$ Gromov-Witten potential (\cite{DZ}) in case of semisimple quantum cohomology, while in genus $0$ it is equivalent to string equation, divisor equation, dilaton equation and topological recursion relations.\\

Topological recursion relations can be very naturally expressed in terms of SFT of a circle bundle, thanks to the extra $S^1$-symmetry which is typical of that situation. It is hence very natural to wonder whether topological recursion relations have a counterpart in a more general SFT context than the circle bundle case. While string, dilaton and divisor equations where extended by the authors to the general SFT case in \cite{FR}, topological recursion relations appear far more subtle and we'll see in the next sections how a natural interpretation requires non-$S^1$-equivariant versions of holomorphic curve counting in stable Hamiltonian structures. Since the full picture for a non-equivariant version of SFT is not completely understood yet, we will restrict to the cylindrical case here.\\

Although this is a first concrete step in the study of integrability of the hamiltonian systems of SFT, notice that topological recursion relations in the forms we study here might not be enough to answer the question of integrability: the Hamiltonian systems arising from SFT are, a priori, much more general than those associated with Gromov-Witten invariants, involving in particular more than just local functionals. \\

Here the locality of the Hamiltonians means that the Hamiltonian is given by integrating a function, the so-called Hamiltonian density, over the circle, which itself just reflects the fact that the multiplicities of the positive and negative orbits match. From the latter it follows that the locality condition can be naturally generalized to the case of general contact manifolds by restricting to the Poisson subalgebra $\PP_{\geo}\subset\PP$ of so-called \emph{geometric Hamiltonians}, which is generated by monomials $p^{\Gamma_+}q^{\Gamma_-}$ such that $[\Gamma_+]=[\Gamma_-]\in H_1(V)$. Apart from the fact that it is also easy to see from the definition of the Poisson bracket that $\{f,g\}\in \PP_{\geo}$ whenever $f,g\in\PP_{\geo}$, all Hamiltonians defined by counting holomorphic curves are automatically geometric, i.e., $\Ih\in\PP_{\geo}$ and hence also $\Ih_{\alpha,p}\in\PP_{\geo}$, so that we can only expect to be able to prove completeness of the set of commuting Hamiltonians when we restrict to the above Poisson subalgebra. \\  

While in this paper we still continue to work with the full Poisson algebra $\PP$ as all our results immediately restrict to the above Poisson subalgebra, the  topological recursion relations, even together with string, dilaton and divisor equations, might not yet be restrictive enough to grant complete control over the algebra of commuting Hamiltonians. Nevertheless they seem however to give an affirmative answer to the fundamental question of the reconstructability of the gravitational descendants from the primaries at genus zero.\\

As concrete example beyond the case of circle bundles discussed in \cite{EGH} we review the symplectic field theory of a closed geodesic discussed in \cite{F2}. For this recall that in \cite{F2} the first author has shown that every algebraic invariant of symplectic field theory has a natural analog defined by counting only orbit curves. In particular, in the same way as we define sequences of descendant Hamiltonians $\IH^1_{i,j}$ and $\Ih^1_{i,j}$ by counting general curves in the symplectization of a contact manifold, we can define sequences of descendant Hamiltonians $\IH^1_{\gamma,i,j}$ and $\Ih^1_{\gamma,i,j}$ by just counting branched covers of the orbit cylinder over $\gamma$ with signs (and weights), where the preservation of the contact area under splitting and gluing of curves proves that for every theorem from above we have a version for $\gamma$. \\

Let $\PP^0_{\gamma}$ be the graded Poisson subalgebra of the Poisson algebra $\PP^0$ obtained from the Poisson algebra $\PP$ by setting all $t$-variables to zero, which is generated only by those $p$- and $q$-variables $p_n=p_{\gamma^n}$, $q_n=q_{\gamma^n}$ corresponding to Reeb orbits which are multiple covers of the fixed orbit $\gamma$. In his paper \cite{F2} the first author computed the corresponding Poisson-commuting sequence in the special case where the contact manifold is the unit cotangent bundle $S^*Q$ of a ($m$-dimensional) Riemannian manifold $Q$, so that every closed Reeb orbit $\gamma$ on $V=S^*Q$ corresponds to a closed geodesic $\bar{\gamma}$ on $Q$ (and the string of differential forms just contains a one-form which integrates to one over the closed Reeb orbit). 

\begin{theorem}
The system of Poisson-commuting functions $\Ih^1_{\gamma,j}$, $j\in\IN$ on $\PP^0_{\gamma}$ is isomorphic to a system of Poisson-commuting functions $\Ig^1_{\bar{\gamma},j}$, $j\in\IN$ on $\PP^0_{\bar{\gamma}}=\PP^0_{\gamma}$, where for every $j\in\IN$ the descendant Hamiltonian $\Ig^1_{\bar{\gamma},j}$ given by 
\begin{equation*} 
 \Ig^1_{\bar{\gamma},j} \;=\; \sum \epsilon(\vec{n})\frac{q_{n_1}\cdot ... \cdot q_{n_{j+2}}}{(j+2)!} 
\end{equation*}
where the sum runs over all ordered monomials $q_{n_1}\cdot ... \cdot q_{n_{j+2}}$ with $n_1+...+n_{j+2} = 0$ \textbf{and which are of degree $2(m+j-3)$}. Further $\epsilon(\vec{n})\in\{-1,0,+1\}$ is fixed by a choice of coherent orientations in symplectic field theory and is zero if and only if one of the orbits $\gamma^{n_1},...,\gamma^{n_{j+2}}$ is bad in the sense of \cite{BEHWZ}.
\end{theorem}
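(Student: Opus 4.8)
The plan is to recall the computation of the first author in \cite{F2}, organized around the correspondence between closed Reeb orbits on $V=S^*Q$ and closed geodesics on $Q$. First I would fix the geometry: with the canonical contact form on $S^*Q$ the Reeb flow is the geodesic flow, so the $n$-fold cover $\gamma^n$ of the chosen orbit corresponds to the $n$-fold cover $\bar{\gamma}^n$ of the closed geodesic, and $\CZ(\gamma^n)$ is expressed, up to the standard dimension shift, through the Morse index of $\bar{\gamma}^n$ as a critical point of the energy functional. By construction $\Ih^1_{\gamma,j}$ counts, with signs and weights coming from coherent orientations and orbit multiplicities, the rigid elements of the descendant moduli spaces of genus-zero branched covers of the orbit cylinder $\IR\times\gamma$ carrying one extra marked point constrained by the $j$-th power of the tautological line bundle, with the chosen $1$-form pulled back to that marked point.

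Next I would carry out the reduction of these moduli spaces using the $S^1$-action rotating the cylinder. Composing a branched cover of $\IR\times\gamma$ with the projection to $\IR\times S^1$ turns it into a meromorphic function on a genus-zero curve, the negative and positive punctures recording the orders of its zeros and poles; Riemann--Hurwitz then forces the two total ramification degrees to agree, which under the convention from \cite{F2} that a positive puncture asymptotic to $\gamma^{n}$ contributes the variable $q_{-n}:=p_{\gamma^{n}}$ is precisely the balancing condition $n_1+\dots+n_{j+2}=0$. Localizing the curve count onto the fixed loci of the $S^1$-action, and using an auxiliary Morse function on the space of parametrizations of $\bar{\gamma}$, one finds that rigid configurations have exactly $j+2$ punctures and that their count is governed by $\psi$-class integrals over $\DM_{0,j+3}$: the $j$-th power of the tautological class at the extra marked point contributes $\int_{\DM_{0,j+3}}\psi_1^{\,j}=1$, the passage from unordered to ordered monomials in the $j+2$ punctures yields the universal factor $1/(j+2)!$, and the grading $|q_{n_1}\cdots q_{n_{j+2}}|=2(m+j-3)$ is forced by the degrees of $\IH$ and of the formal variable attached to the chosen $1$-form, equivalently by the vanishing of the Fredholm index of a rigid configuration after imposing the descendant constraint.

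The remaining points are bookkeeping. The coefficient $\epsilon(\vec{n})\in\{-1,0,+1\}$ is the sign with which a rigid configuration enters the coherently oriented moduli space; it vanishes exactly when one of the orbits $\gamma^{n_i}$ is bad, since bad orbits are not among the generators of $\PP$ and, correspondingly, the two orderings of a simple branch point lying over a bad multiple cover cancel. Poisson-commutativity of the $\Ih^1_{\gamma,j}$ is the specialization to the orbit $\gamma$ of the rational commutativity statement of Section~2 --- valid because the contact area is preserved under splitting and gluing --- and it transports to the $\Ig^1_{\bar{\gamma},j}$ through the Poisson-algebra isomorphism $\PP^0_{\gamma}\to\PP^0_{\bar{\gamma}}=\PP^0_{\gamma}$, which acts diagonally on the $p$- and $q$-generators and absorbs the explicit constants relating the two normalizations; alternatively one recognizes $\sum\epsilon(\vec{n})\,q_{n_1}\cdots q_{n_{j+2}}/(j+2)!$ as the graded analogue of the Hamiltonians $\int u^{j+2}/(j+2)!$ of the dispersionless Hopf hierarchy, which manifestly commute.

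I expect the main obstacle to be the localization step together with the precise evaluation of the descendant contributions over the branched-cover moduli spaces: one must show that the coherent collection of sections defining the descendants can be chosen compatibly with the fixed-point decomposition so that only the $(j+2)$-puncture stratum survives and contributes $\int\psi^{\,j}=1$, and one must control the induced coherent orientations accurately enough to determine $\epsilon(\vec{n})$ and its vanishing on bad orbits. As everywhere in this paper this rests on the not-yet-complete polyfold transversality package, so the argument is carried out up to transversality.
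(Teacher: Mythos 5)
Your sketch reproduces the combinatorial shell of the formula correctly: the balancing condition $n_1+\dots+n_{j+2}=0$ from Riemann--Hurwitz applied to the projection of a branched cover of the orbit cylinder to $\IR\times S^1$, the factor $1/(j+2)!$ from ordering the punctures, the degree selection from the index-zero requirement, and the derivation of Poisson-commutativity from the master equation specialised to $\gamma$ are all in line with the intended argument. Note, however, that the paper does not actually prove this theorem; it is quoted from \cite{F2}, and the text names exactly two ingredients of the proof given there: the interpretation of gravitational descendants on branched covers of orbit cylinders as \emph{branching conditions}, and the Cieliebak--Latschev computation of the SFT of $S^*Q$ via string topology of $Q$, i.e.\ via holomorphic curves in $T^*Q$.

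The genuine gap in your proposal sits precisely where you yourself flag the "main obstacle", and it is not a technicality that transversality machinery will absorb. The moduli spaces of branched covers of the orbit cylinder are never transversally cut out: they can be nonempty in strata whose Fredholm index is negative (these are exactly the monomials of non-maximal degree that the theorem discards) and the count on the index-zero strata is a virtual/obstruction-bundle count, not a naive enumeration. Your proposed mechanism for evaluating these contributions --- localization onto the fixed loci of the $S^1$-action rotating the cylinder, followed by $\int_{\DM_{0,j+3}}\psi_1^{\,j}=1$ --- is not available here: the branched covers are not $S^1$-fixed, there is no equivariant localization formula for these Floer-theoretic moduli spaces, and the $\psi$-integral over $\DM_{0,j+3}$ computes the circle case ($\gamma=V=S^1$) but by itself cannot see the Morse indices of $\bar{\gamma}^n$, which is exactly the data that decides which balanced monomials survive and with which sign $\epsilon(\vec{n})$. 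The step you are missing is the second ingredient the paper cites: transporting the count to $T^*Q$ and using the string-topology description of \cite{CL} to identify the contribution of each stratum of branched covers, which is what simultaneously yields the degree restriction to $2(m+j-3)$, the values $\epsilon(\vec{n})\in\{-1,0,+1\}$, and their vanishing exactly on bad orbits. Without some such computation of the obstruction contributions, the formula for $\Ig^1_{\bar{\gamma},j}$ is not established.
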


For $\gamma=V=S^1$ this recovers the well-known result that the corresponding classical Hamiltonian system with symmetries is given by the dispersionless KdV hierarchy, see \cite{E}. Forgetting about the appearing sign issues, it follows that the sequence $\Ig^1_{\bar{\gamma},j}$ is obtained from the sequence for the circle by removing all summands with the wrong, that is, not maximal degree, so that the system is completely determined by the KdV hierarchy and the Morse indices of the closed geodesic and its iterates. Apart from using the geometric interpretation of gravitational descendants for branched covers of orbit cylinders over a closed Reeb orbit in terms of branching conditions, the second main ingredient for the proof is the idea in \cite{CL} to compute the symplectic field theory of $V=S^*Q$ from the string topology of the underlying Riemannian manifold $Q$ by studying holomorphic curves in the cotangent bundle $T^*Q$. 

\vspace{0.5cm}

\subsection{String, dilaton and divisor equations}
While it is well-known that in Gromov-Witten theory the topological meaning of gravitational descendants leads to new differential 
equations for the Gromov-Witten potential, it is natural to ask how these rich algebraic structures carry over from Gromov-Witten theory to symplectic field theory. As a first step, the authors have shown in the paper \cite{FR} how the well-known string, dilaton and divisor equations generalize from Gromov-Witten theory to symplectic field theory. \\

Here the main problem is to deal with the fact that the SFT Hamiltonian 
itself is not an invariant for the contact manifold. More precisely it depends not only on choices like contact form, 
cylindrical almost complex structure and coherent abstract perturbations but also on the chosen differential forms 
$\theta_i$ and coherent collections of sections $(s_j)$ used to define gravitational descendants. The main application 
of these equations we have in mind is the computation of the sequence of commuting quantum Hamiltonians 
$\IH_{\alpha,p}=\frac{\del\IH}{\del t^{\alpha,p}}$ on SFT homology $H_*(\hbar^{-1}\WW, [\IH,\cdot])$ introduced in the last section. \\

As customary in Gromov-Witten theory we will assume that the chosen string of differential forms on $V$ contains a 
two-form $\theta_2$. Since by adding a marked point we increase the dimension of the moduli space by two, the integration 
of a two-form over it leaves the dimension unchanged and we can expect, as in Gromov-Witten theory, to compute the 
contributions to SFT Hamiltonian involving integration of $\theta_2$ in terms of contributions without integration, where 
the result should just depend on the homology class $A\in H_2(V)$ which can be assigned to the holomorphic curves in the 
corresponding connected component of the moduli space. \\

It turns out that we obtain the same equations as 
in Gromov-Witten theory (up to contributions of constant curves), but these however only hold after passing to SFT homology. 

\begin{theorem}
 For any choice of differential forms and coherent sections the following \emph{string, dilaton and divisor equations} hold \emph{after} passing to SFT-homology 
\begin{eqnarray*}
\frac{\del}{\del t^{0,0}}\IH &=& \int_V t\wedge t  + \sum_{k}t^{\alpha,k+1}\frac{\del}{\del t^{\alpha,k}}\IH 
 \;\in\; H_*(\hbar^{-1}\WW,[\IH,\cdot]), \\
\frac{\del}{\del t^{0,1}}\IH &=& \ID_{\mathrm{Euler}}\IH  \;\in\, H_*(\hbar^{-1}\WW,[\IH,\cdot]), \\
\left(\frac{\del}{\del t^{2,0}} -z_0\frac{\del}{\del z_0}\right)\IH &=& \int_V t\wedge t\wedge \theta_2 + \sum_{k} t^{\alpha,k+1} c_{2\alpha}^\beta\frac{\del\IH}{\del t^{\beta, k}} \;\in\; H_*(\hbar^{-1}\WW,[\IH,\cdot]),
\end{eqnarray*}
with the first-order differential operator 
$$\ID_{\mathrm{Euler}} := -2\hbar\frac{\del}{\del\hbar}-\sum_\gamma p_\gamma\frac{\del}{\del p_\gamma}
-\sum_\gamma q_\gamma\frac{\del}{\del q_\gamma}-\sum_{\alpha,p}t^{\alpha,p}\frac{\del}{\del t^{\alpha,p}}.$$
\end{theorem}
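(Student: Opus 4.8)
The plan is to adapt to SFT the classical derivation of the string, dilaton and divisor equations in Gromov--Witten theory, proving the three identities in parallel. For each moduli space $\CM_{g,r+1,A}(\Gamma^+,\Gamma^-)/\IR$ I would use the map $\pi$ forgetting the last marked point and stabilizing the domain, which lands in $\CM_{g,r,A}(\Gamma^+,\Gamma^-)/\IR$. The three left-hand sides correspond to inserting an extra marked point carrying, respectively: the distinguished $0$-form $\theta_0=1$ with descendant order $0$ (string), $\theta_0=1$ with descendant order $1$ (dilaton), and the two-form $\theta_2$ with descendant order $0$ (divisor). The engine is the comparison between the tautological line bundles $\LL_{i,r+1}$ on $\CM_{r+1}$ and their pullbacks $\pi^*\LL_{i,r}$: as in GW one has $\LL_{i,r+1}\cong\pi^*\LL_{i,r}\otimes\mathcal{O}(D_i)$, where $D_i$ is the locus on which the $i$-th and the $(r+1)$-st marked point lie on a common unstable bubble component. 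I would implement this at the chain level by choosing the coherent collections of sections $(s_j)$ on the larger moduli spaces compatibly with $\pi$, so that $\CM^{(j_1,\dots,j_r,0)}_{r+1}$ fibres over $\CM^{(j_1,\dots,j_r)}_r$ with fibre the stabilized domain curve, while the collision divisors $D_i$ produce the descendant-order shifts $j_i\mapsto j_i-1$ on the remaining points.

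With this in place the three identities follow from fibre integration along $\pi$. For the string equation the cut descendant moduli space has $0$-dimensional fibre, so one recovers $\IH$ with one descendant order raised, i.e. $\sum_k t^{\alpha,k+1}\frac{\del}{\del t^{\alpha,k}}\IH$, together with the exceptional contribution of curves whose domain is unstable, which reproduces $\int_V t\wedge t$. For the dilaton equation the fibrewise integral of $c_1(\LL_{r+1,r+1})$ equals (up to the sign fixed by the SFT orientation conventions) the Euler characteristic of the punctured and marked domain, which on a monomial $\hbar^{g-1}t^Ip^{\Gamma^+}q^{\Gamma^-}$ is exactly the action of $\ID_{\mathrm{Euler}}$. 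For the divisor equation $\int_{\mathrm{fibre}}\ev_{r+1}^*\theta_2$ is the pairing $\langle A,[\theta_2]\rangle$, which after the normalization relating $\theta_2$ to the basis class $A_0$ is recorded by $z_0\frac{\del}{\del z_0}$, the collision divisors contribute $\sum_k t^{\alpha,k+1}c_{2\alpha}^{\beta}\frac{\del}{\del t^{\beta,k}}\IH$ with $c_{2\alpha}^\beta$ the structure constants of $[\theta_2]\cup[\theta_\alpha]=c_{2\alpha}^\beta[\theta_\beta]$, and the unstable-domain curves give $\int_V t\wedge t\wedge\theta_2$.

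The main obstacle, and the reason the equations hold only \emph{after} passing to SFT homology, is the genuine codimension-one boundary of the SFT compactifications: under $\pi$, and already when forcing the coherent sections to be $\pi$-compatible, the broken curve configurations in $\del\CM_{r+1}$ contribute extra terms that do not cancel between the two sides at the chain level. The key point to establish is that these leftover boundary terms organize --- in the same way as in the proof of the master equation $[\IH,\IH]=0$, via the gluing/splitting structure of the descendant moduli spaces in \cite{F2} --- into $[\IH,G]$ for an auxiliary element $G\in\hbar^{-1}\WW$ assembled from the descendant counts on the pieces of the broken buildings, hence vanish in $H_*(\hbar^{-1}\WW,[\IH,\cdot])$. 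This is where the bulk of the work lies.

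Two subsidiary points I expect to be routine but which must be verified: the behaviour of the descendant construction when the forgotten marked point sits on a component that destabilizes to a trivial orbit cylinder, handled by the conventions for $\LL_i$ over domains with unstable components; and the precise identification of the constant / low-energy strata responsible for the $\int_V t\wedge t$ and $\int_V t\wedge t\wedge\theta_2$ terms, in direct analogy with the role of $\CM_{0,3}$ in Gromov--Witten theory.
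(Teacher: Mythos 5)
Your outline follows essentially the same route as the paper (which here only summarizes the proof from \cite{FR}): one chooses special non-generic, $\pi$-compatible coherent collections of sections so that the chain-level comparison formula $s_{i,r}^{-1}(0)=\pi_r^{-1}(s_{i,r-1}^{-1}(0))+D^{\mathrm{const}}_{i,r}$ holds, pushes forward along the forgetful map, and reads off the string, dilaton and divisor terms exactly as you describe, with the codimension-one broken configurations assembling into a commutator $[\IH,G]$ that dies on SFT homology. Your identifications of the three fibre integrals (trivial fibre, Euler characteristic via $\ID_{\mathrm{Euler}}$, pairing $\langle A,[\theta_2]\rangle$ via $z_0\frac{\del}{\del z_0}$) and of the exceptional constant strata are all consistent with the intended argument.

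There is, however, one ingredient you omit which the statement explicitly requires and which the paper flags as a separate step: the theorem asserts the equations \emph{for any} choice of differential forms and coherent sections, whereas your argument establishes them only for the particular $\pi$-compatible collections for which the comparison formula holds on the nose. The paper's strategy is two-stage: first prove the identities for these special non-generic choices, and then prove that the equations are \emph{covariant} under a change of auxiliary data (replacing $\theta_\alpha$ within its cohomology class and replacing one coherent collection of sections by another). This second step is not subsumed by your analysis of $\del\CM_{r+1}$; it is a separate $[\IH,\cdot]$-exactness argument, carried out by interpolating between the two collections with a coherently connecting family of sections as in \cite{F2}, and it is needed because the abstract invariance theorem only provides an isomorphism of homology algebras, not the identity map, so it does not by itself transport the inhomogeneous identities above from one choice of sections to another. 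With that step added, your plan matches the proof in \cite{FR}.
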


In order to prove the desired equations we will start with special non-generic choices of coherent collections of 
sections in the tautological bundles $\LL_{i,r}$ over all moduli spaces $\CM_r=\CM_{g,r,A}(\Gamma^+,\Gamma^-)/\IR$ and then prove that the resulting equations are covariant with respect to a change of auxiliary data. 

\vspace{0.5cm}

\subsection{Cylindrical contact homology}
While the punctured curves in symplectic field theory may have arbitrary genus and arbitrary numbers of positive and negative punctures, it  is shown in \cite{EGH} that there exist algebraic invariants counting only special types of curves. While in rational symplectic field theory one counts punctured curves with genus zero, contact homology is defined by further restricting to punctured spheres with only one positive puncture. Further restricting to spheres with both just one negative and one positive puncture, i.e. cylinders, the resulting algebraic invariant  is called cylindrical contact homology. \\

Note however that contact homology and cylindrical contact homology are not always defined. In order to 
prove the well-definedness of (cylindrical) contact homology it however suffices to show that there are no punctured holomorphic curves where all punctures are negative (or 
all punctures are positive). To be more precise, for the well-definedness of cylindrical contact homology it actually suffices to assume that there are no holomorphic planes and that there are either no holomorphic cylinders with two positive or no holomorphic cylinders with two negative ends. \\

While the existence of holomorphic curves without positive punctures can be excluded for all contact 
manifolds using the maximum principle, which shows that contact homology is well-defined for all contact manifolds, it can be seen from homological reasons 
that for mapping tori $M_{\phi}$ there cannot exist holomorphic curves in $\IR\times M_{\phi}$ carrying just one type of punctures, which shows that in this case 
both contact homology and cylindrical contact homology are defined. \\ 

Similarly to what happens in Floer homology, the chain space for cylindrical homology $C$ is defined be the vector space generated by the formal variables $q_{\gamma}$ with coefficients which are formal power series in the $t^{\alpha,j}$-variables and Laurent series in the $z_n$-variables. Counting only holomorphic cylinders defines  a differential $\del: C_*\to C_*$ by 
\[ \del q_{\gamma^+} = \sum_{\gamma^-} \frac{\del^2 \Ih}{\del p_{\gamma^+}\del q_{\gamma^-}}|_{p=q=0} \cdot q_{\gamma^-}\] 
with $\del\circ\del =0$ when there do not exist any holomorphic planes, so that one can define the cylindrical homology of the closed stable Hamiltonian manifold as the homology of the chain complex $(C,\del)$. The sequence of commuting Hamiltonians $\Ih_{\alpha,p}$ in rational symplectic field theory gets now replaced by linear maps 
\[\del_{\alpha,p} = \frac{\del}{\del t^{\alpha,p}}\circ \del: C_*\to C_*,\; 
\del_{\alpha,p}q_{\gamma^+}= \sum_{\gamma^-} \frac{\del^3 \Ih}{\del t^{\alpha,p}\del p_{\gamma^+}\del q_{\gamma^-}}|_{p=q=0} \cdot q_{\gamma^-},\]
which by the same arguments descend to maps on homology, $\del_{\alpha,p}: H_*(C,\del)\to H_*(C,\del)$, and commute on homology, $[\del_{\alpha,p},\del_{\beta,q}]_-=0$, with respect to the graded commutator $[f,g]_-=f\circ g - (-1)^{\deg(f)\deg(g)} g\circ f$.\\

While we have already shown how the well-known string, dilaton and divisor equations translate from Gromov-Witten theory to SFT, in this paper we want to proceed with our project of understanding how the rich algebraic structures from Gromov-Witten theory carry over to symplectic field theory.  As the next step we want to show how classical genus-zero topological recursion generalizes to symplectic field theory. As we will outline in a forthcoming paper, it follows from the computation of the SFT of a Reeb orbit with descendants outlined above that the genus-zero topological recursion requires a non-equivariant version of SFT, which is generated by parametrized instead of unparametrized Reeb orbits. \\

The definition of this non-equivariant version of SFT is currently a very active field of research and related to the work of Bourgeois and Oancea in \cite{BO}, where a Gysin-type spectral sequence relating linearized contact homology (a slight generalization of cylindrical contact homology depending on a symplectic filling) and symplectic homology of this filling is established by viewing the one as the (non-)equivariant version of the other. On the other hand, since the topological recursion relations are already interesting in the case of cylindrical contact homology and the non-equivariant version of it is already understood, in this paper we will study the algebraic structure of gravitational descendants just for this special case first. 

\vspace{0.5cm}

\section{Topological recursion in Floer homology from Gromov-Witten theory}
It is well-known that there is a very close relation between Gromov-Witten theory and symplectic Floer theory. Apart from the fact that the Floer (co)homology groups are isomorphic to quantum cohomology groups of the underlying symplectic manifold, which was used by Floer to prove the famous Arnold conjecture about closed orbits of the Hamiltonian vector field, it was outlined by Piunikhin-Salamon-Schwarz in \cite{PSS} how the higher structures carry over from one side to the other.  As a motivation for the topological recursion relations in (non-equivariant) cylindrical homology, which we will prove in the next section, we show in this section how the topological recursion relations in Gromov-Witten theory carry over to symplectic Floer homology using a Morse-Bott correspondence.

\vspace{0.5cm}

\subsection{Topological recursion in Gromov-Witten theory}
As already mentioned in the last section it is customary in Gromov-Witten theory to introduce $r$ tautological line 
bundles $\LL_1,\ldots,\LL_r$ over each moduli space $\CM_r=\CM_{g,r,A}(X)$ of closed $J$-holomorphic curves $u:(S_g,j)\to(X,J)$, where the fibre of $\LL_i$ over a curve $(u,\Si)\in\CM_r$ is again given 
by the cotangent line to the underlying, possibly unstable nodal Riemann surface (without ghost components) at the 
$i$.th marked point and which again formally can be defined as the pull-back of the vertical cotangent line 
bundle of $\pi: \CM_{r+1}\to\CM_r$ under the canonical section $\sigma_i: \CM_r\to\CM_{r+1}$ mapping to the $i$.th marked 
point in the fibre. \\

Assuming we have chosen a basis $A_0,\ldots,A_N$ of $H_2(X)$, we assign to every $A_i$ a formal 
variable $z_i$ with grading $|z_i|=- 2 c_1(A_i)$. In order to include higher-dimensional moduli spaces we further assume that a string of closed (homogeneous) differential forms $\Theta=(\theta_1,\ldots,\theta_N)$ on $X$ is chosen and assign to 
every $\theta_{\alpha}\in\Omega^*(X)$ a sequence of formal variables $t^{\alpha,j}$ 
with grading
\begin{equation*} |t^{\alpha,j}|=2 - 2j - \deg\theta_{\alpha}. \end{equation*}  
Finally, let $\hbar$ be another formal variable of degree $|\hbar|=2(m-3)$ with $\dim X=2m$. \\

With this we can define the descendant potential of Gromov-Witten theory, which we will denote by $\IF$,
\begin{equation*}
 \IF = \sum_{I} \int_{\CM^{(j_1,\ldots,j_r)}_{g,r,A}(X)}
 \ev_1^*\theta_{\alpha_1}\wedge\ldots\wedge\ev_r^*\theta_{\alpha_r}\; \hbar^{g-1}t^I z^d,
\end{equation*}
where $t^{I}=t^{\alpha_1,j_1} \ldots t^{\alpha_r,j_r}$ and $z^d = z_0^{d_0} \cdot \ldots \cdot z_N^{d_N}$. 
Expanding 
\begin{equation*} \IF=\hbar^{-1}\sum_g \IF_g \hbar^g \end{equation*} 
we further get a rational potential $\If=\IF_0\in\PP$, which counts only curves with genus zero. \\

Topological recursion relations are differential equations for the descendant potential which are proven using the geometric meaning of gravitational descendants, as for the string, dilaton and divisor equations. They follow from the fact that by making special non-generic choices for the sections in the tautological line bundles, the resulting zero divisors $\CM_{0,r}^{(j_1,\ldots,j_r)}(X)$ localize on the singular strata in the compactified moduli space $\CM_r(X)$. \\ 

We start with the following definition. A $r$-labelled tree is a triple $(T,E,\Lambda)$, where $(T,E)$ is a tree with the set of 
vertices $T$ and the edge relation $E \subset T\times T$. The set $\Lambda = (\Lambda_{\alpha})$ is a decomposition of the index 
set $I=\{1,\ldots,r\}=\bigcup \Lambda_{\alpha}$. We write $\alpha E\beta$ if $(\alpha,\beta)\in E$. A nodal holomorphic curve of genus zero modelled over $T=(T,E,\Lambda)$ is a tuple $(\underline{u}=(u_{\alpha}),\uz = 
((z_{\alpha\beta})_{\alpha E\beta}, (z_k))$ of holomorphic maps $u_{\alpha}: (\CP,i)\to (X,J)$ and special points $z_{\alpha\beta}, z_k \in \CP$ such that $u_{\alpha}(z_{\alpha\beta})=u_{\beta}(z_{\beta\alpha})$ and for each $\alpha\in T$ the special points 
in $Z_{\alpha} = \{z_{\alpha\beta}:\alpha E\beta\}\cup\{z_k: k\in\Lambda_{\alpha}\}$ are pairwise distinct. A nodal holomorphic curve $(\underline{u},\uz)$ is called stable if every constant map $u_{\alpha}$ the underlying sphere carries at 
least three special points. Denote by $\IM_T=\IM_T(X)$ the space of all nodal holomorphic curves (of genus zero) modelled 
over the tree $T=(T,E,\Lambda)$. Taking the union of all moduli spaces of stable nodal holomorphic curves modelled over $r$-labelled trees, we obtain the compactified space, $\CM_r = \coprod_T \IM_T$, which, equipped with the Gromov topology, provides the compactification of the moduli space $\IM_r$ of holomorphic spheres with $r$ marked points. \\
 
Fix $1\leq i\leq r$ and choose $1\leq j,k\leq r$ such that $i,j,k$ are pairwise different. While the string, dilaton and divisor equations are proven (see also \cite{FR}) by studying the behavior of the tautological line bundle under the natural map $\CM_{0,r}(X)\to\CM_{0,r-1}(X)$, the topological recursion relations follow by studying the behavior of the tautological line bundle under the natural map $\CM_{0,r}(X)\to\CM_{0,3}=\{\textrm{point}\}$, where the map and all marked points except $i,j,k$ are forgotten. Defining the divisor $\CM_{0,r}^{i,(j,k)}(X)$ as the union of moduli spaces of nodal holomorphic curves
\[\CM_{0,r}^{i,(j,k)}(X) = \bigcup_{T_0}\CM_{T_0}(X) = \bigcup_{T\leq T_0} \IM_T(X),\] 
over all labelled trees $T_0=(T_0,E_0,\Lambda_0)$ with $T_0=\{1,2\}$, $1E_02$, $i\in\Lambda_{0,1}$, $j,k\in\Lambda_{0,2}$ or $T=(T,E,\Lambda)$ with $i\in\Lambda_{\alpha}\Rightarrow j,k\not\in \Lambda_{\alpha}$, respectively, the following theorem is a standard result in Gromov-Witten theory. \\

\begin{theorem}  By studying the behavior of the tautological line bundle under the map $\CM_{0,r}(X)\to\CM_{0,3} =\{\textrm{point}\}$, where the map and all marked points except $i,j,k$ are forgotten, one can construct a special non-generic section in the tautological line bundle $\LL_{i,r}$ over $\CM_{0,r}(X)$ such that the zero divisor $\CM_{0,r}^{(0,\ldots,0,1,0,\ldots,0)}(X)$ agrees with the above divisor $\CM_{0,r}^{i,(j,k)}(X)$ of holomorphic spheres with one node, where the $i$.th marked point lies on one component and the $j$.th and the $k$.th fixed marked points lie on the other component. \end{theorem}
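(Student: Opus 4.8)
The plan is to construct the required section explicitly, as the image of the canonical trivialisation of the tautological line bundle over $\CM_{0,3}=\{\textrm{point}\}$ under the bundle map induced by the stabilisation morphism, and then to read off its zero locus from the combinatorics of stabilisation. Throughout $r\geq 3$, and $i,j,k$ are the fixed distinct indices of the statement.

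First I would set up the forgetful morphism $\mathrm{ft}\colon\CM_{0,r}(X)\to\CM_{0,\{i,j,k\}}=\CM_{0,3}=\{\textrm{point}\}$ that forgets the map, forgets all marked points except $z_i,z_j,z_k$, and stabilises; exactly as in the definition of $\LL_{i,r}$ the domain is first replaced by its underlying nodal Riemann surface without ghost components, which is what makes $\mathrm{ft}$ meaningful on the unstable-domain strata. Over the one-point base the line $\LL_i^{\CM_{0,3}}=T^*_{z_i}\CP$ is canonically trivial: the three distinct points $z_i,z_j,z_k\in\CP$ determine a unique coordinate $w$ with $w(z_j)=0$, $w(z_k)=\infty$, $w(z_i)=1$, and $dw|_{z_i}$ is a canonical generator, so $\mathrm{ft}^*\LL_i^{\CM_{0,3}}$ carries a canonical nowhere-zero section $\sigma_0$. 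Associated with $\mathrm{ft}$ there is, over each point of $\CM_{0,r}(X)$, the domain contraction $c\colon\Sigma\to\Sigma^{\mathrm{st}}$; pull-back of cotangent vectors at the $i$-th marked point gives a fibrewise linear map $T^*_{c(z_i)}\Sigma^{\mathrm{st}}\to T^*_{z_i}\Sigma$, and since $c$ varies holomorphically in families this assembles into a morphism of line bundles $\Phi\colon\mathrm{ft}^*\LL_i^{\CM_{0,3}}\to\LL_{i,r}$. I then take $s:=\Phi(\sigma_0)$ to be the asserted special section.

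Next I would compute its zero set. The contraction $c$ restricts to an isomorphism on the unique component of $\Sigma$ that survives stabilisation and collapses every other component, so the fibrewise map $T^*_{c(z_i)}\Sigma^{\mathrm{st}}\to T^*_{z_i}\Sigma$ is an isomorphism exactly when $z_i$ lies on that surviving component and is zero otherwise; hence $s\neq 0$ precisely where no node of $\Sigma$ separates $z_i$ from the pair $\{z_j,z_k\}$, and $s=0$ on the complementary locus. By the description of degenerations, this complementary locus is exactly the union, over all $r$-labelled trees $T_0=\{1,2\}$ with $1E_02$, $i\in\Lambda_{0,1}$, $j,k\in\Lambda_{0,2}$, of the closed strata $\CM_{T_0}(X)=\bigcup_{T\leq T_0}\IM_T(X)$, i.e. $\CM_{0,r}^{i,(j,k)}(X)$. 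To pass from an equality of sets to an equality of divisors I would carry out the standard local computation at a generic point of a top stratum $\Sigma=\Sigma_1\cup_q\Sigma_2$ of $\CM_{0,r}^{i,(j,k)}(X)$ with $z_i\in\Sigma_1$ and $z_j,z_k\in\Sigma_2$: if $\rho$ denotes the smoothing parameter of the node $q$ (a local coordinate cutting out the stratum) and $\zeta$ a local coordinate near $z_i$, then expanding the normalised coordinate $w$ of the glued curve near $z_i$ gives $s=\rho\cdot(\text{unit})\, d\zeta|_{z_i}$, so $s$ vanishes to order exactly one along the boundary and $(s)^{-1}(0)=\CM_{0,r}^{i,(j,k)}(X)$ as divisors.

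The real work, and the main obstacle, is the analytic and combinatorial bookkeeping behind $\mathrm{ft}$ and $c$: one must check that the stabilisation morphism and the domain contraction are genuinely defined and vary holomorphically across every boundary stratum of $\CM_{0,r}(X)=\coprod_T\IM_T$, including those on which the domain is unstable and the ``underlying surface without ghost components'' convention is in force, so that $\Phi$ is indeed a morphism of line bundles and $s$ extends holomorphically over all of $\CM_{0,r}(X)$; once that is in place, the order-one vanishing at the boundary is the short computation indicated above. (In the symplectic field theory applications one additionally wants this $s$, or a $C^0$-small perturbation with the same zero divisor, to be realised as one term of a coherent collection of sections in the sense of \cite{F2}; that is where the genuine content of the later sections lies, but for the present statement about a single moduli space it is not needed.)
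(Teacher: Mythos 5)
Your proposal is correct, and it is the standard argument: the paper itself offers no proof of this statement (it is quoted as ``a standard result in Gromov-Witten theory''), but your mechanism --- pulling back the canonical trivialisation of $\LL_i$ over $\CM_{0,3}$ through the stabilisation/contraction morphism, reading off the zero set from which component survives stabilisation, and checking order-one vanishing in the node-smoothing parameter --- is precisely the mechanism the paper later adapts in Section 4.3, where sections are pulled back from the Deligne--Mumford space via $\sigma$ and degree-one zeros are added along the bubbling divisors $D_i$ (formulas (\ref{comparison})--(\ref{loc1})). The only point worth flagging is that you should state the multiplicity-one computation once for each irreducible component $D_{(I|J)}$ of $\CM_{0,r}^{i,(j,k)}(X)$ (the computation is identical for each), and that, as you already note, the holomorphicity of the contraction in families is exactly the bookkeeping that the coherence formalism of \cite{F2} is designed to package.
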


On the other hand, translating this into a differential equation for the descendant potential $\If$ we get
\begin{corollary} The descendant potential $\If$ of rational Gromov-Witten theory satiesfies the following topological recursion relations.
\[ \frac{\del^3 \If}{\del t^{\alpha,i}\del t^{\beta,j}\del t^{\gamma,k}} = \frac{\del^2 \If}{\del t^{\alpha,i-1}\del t^{\mu,0}} \,\eta^{\mu\nu} 
\frac{\del^3 \If}{\del t^{\nu,0}\del t^{\beta,j}\del t^{\gamma,k}}, \]
where $\eta$ is the metric given by Poincar\'e pairing.
\end{corollary}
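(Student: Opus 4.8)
The plan is to translate the geometric statement of the preceding theorem into the claimed PDE by the standard Gromov-Witten bookkeeping, carefully keeping track of the extra descendant indices and the $z$-variables. First I would fix $1\le i\le r$ and auxiliary marked points $j,k$ as in the theorem, and recall that the tautological class $\psi_i=c_1(\LL_{i,r})$ is Poincar\'e dual to the zero divisor $\CM_{0,r}^{(0,\dots,0,1,0,\dots,0)}(X)$, which by the theorem equals the boundary divisor $\CM_{0,r}^{i,(j,k)}(X)$ where the curve breaks into two components, the $i$-th marked point on one component, the $j$-th and $k$-th on the other. Integrating $\ev_1^*\theta_{\alpha_1}\wedge\cdots\wedge\ev_r^*\theta_{\alpha_r}$ against $\psi_i^{j_i}$ over $\CM_{0,r}(X)$, with one marked point carrying descendant order $j_i$ and the others carrying arbitrary orders, and using $\psi_i^{j_i}=\psi_i^{j_i-1}\cdot[\CM_{0,r}^{i,(j,k)}(X)]$ on the divisor (this is exactly the inductive structure of the coherent sections $s^{j}_{i,r}$ recalled in Section~2), one reduces a descendant of order $j_i$ to a descendant of order $j_i-1$ on the component carrying $i$, times a genus-zero curve count on the component carrying $j,k$.

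The second step is to evaluate the contribution of the node. On the divisor $\CM_{0,r}^{i,(j,k)}(X)$, the node is a pair of points $z_{\bullet 1},z_{\bullet 2}$ identified under the evaluation maps, $u_1(z_{\bullet 1})=u_2(z_{\bullet 2})$; inserting the Poincar\'e-dual decomposition $\sum_{\mu,\nu}\theta_\mu\,\eta^{\mu\nu}\,\theta_\nu$ of the diagonal class in $H^*(X)$ at the node (here $\eta^{\mu\nu}$ is the inverse of the Poincar\'e pairing matrix $\eta_{\mu\nu}=\int_X\theta_\mu\wedge\theta_\nu$, assuming the chosen forms span the cohomology) splits the integral over the divisor into a product of an integral over the moduli space of the first component, carrying the marked point $i$ with descendant order $j_i-1$ and an extra marked point carrying $\theta_\mu$ with order $0$, and an integral over the moduli space of the second component, carrying $j,k$ with their orders and an extra marked point carrying $\theta_\nu$ with order $0$. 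The homology class $A$ and the degree vector $d$ split additively, $A=A'+A''$, $z^d=z^{d'}z^{d''}$, so after summing over all splittings the two factors assemble precisely into second and third partial derivatives of $\If$. Differentiating $\If$ three times with respect to $t^{\alpha,i},t^{\beta,j},t^{\gamma,k}$ extracts the coefficient of the monomial with marked points $i,j,k$ carrying $(\theta_\alpha,i),(\theta_\beta,j),(\theta_\gamma,k)$ and arbitrary insertions at the remaining points; the above factorization then yields exactly
\[ \frac{\del^3 \If}{\del t^{\alpha,i}\del t^{\beta,j}\del t^{\gamma,k}} = \frac{\del^2 \If}{\del t^{\alpha,i-1}\del t^{\mu,0}}\,\eta^{\mu\nu}\,\frac{\del^3 \If}{\del t^{\nu,0}\del t^{\beta,j}\del t^{\gamma,k}}, \]
with summation over $\mu,\nu$ understood. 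One must check that the sign conventions from the coherent orientations are compatible with placing the diagonal insertion, but since everything in $\If$ is of even total degree in the relevant variables this does not produce extra signs.

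The main obstacle I expect is not the combinatorics but making precise that the \emph{division by the node factor} — i.e.\ that the restriction of $\CM_r^{(0,\dots,1,\dots,0)}(X)$ to the divisor really decomposes as a fiber product over $X$ of the two lower-dimensional descendant moduli spaces, compatibly with the coherent choice of sections — is valid; this is where one genuinely uses that the coherent collections of sections $(s^j_{i,r})$ were set up so that their restriction to a boundary stratum is the product of the sections on the factors (the defining property recalled in Section~2.2). Equivalently, in the algebraic-geometry language this is the pullback formula $\psi_i|_{D}=\psi_i^{(1)}$ for the boundary divisor $D$ where $i$ is separated from $j,k$ together with $\psi_i^{(2)}$ not appearing, which is classical; here the only real work is to phrase it in the section-theoretic framework used throughout the paper. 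Once that compatibility is in hand, the identification of the two factors with derivatives of $\If$ and the bookkeeping of $z$-variables is routine, and the corollary follows.
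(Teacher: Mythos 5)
Your proposal is correct and follows essentially the same route the paper takes: the preceding theorem localizes the zero divisor of the (coherent, non-generic) section of $\LL_{i,r}$ on the boundary stratum $\CM_{0,r}^{i,(j,k)}(X)$, and the corollary is then obtained exactly as you describe, by inserting the Poincar\'e-dual decomposition $\sum_{\mu,\nu}\theta_\mu\,\eta^{\mu\nu}\,\theta_\nu$ of the diagonal at the node, splitting $A$ and the $z$-variables additively, and reassembling the two factors into derivatives of $\If$. Your closing remark correctly isolates the only genuinely non-routine point, namely that $\psi_i$ restricted to the separating divisor is the $\psi$-class of the component carrying $i$, phrased in the section-theoretic framework via the product structure of the coherent sections on boundary strata.
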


Before we show how these topological recursion relations translate from Gromov-Witten theory to symplectic Floer homology, we want to introduce a slightly weaker version of the above relations. Note that for the above formula one needs to choose two auxiliary $t$-variables. It follows that the recursion is not symmetric with respect to permuting of marked points. On the other hand, it will turn out that in symplectic Floer homology, more generally, non-equivariant cylindrical contact homology, we need to treat all additional marked points in a symmetric way to obtain coherence for the chosen special collections of sections. Because of this fact, we emphasize that the above topological recursion relations immediately lead to the following averaged version. 
\begin{corollary} The descendant potential $\If$ of rational Gromov-Witten theory satiesfies the following averaged version of topological recursion relations,
\[ N(N-1) \frac{\del\If}{\del t^{\alpha,i}} = \frac{\del^2 \If}{\del t^{\alpha,i-1}\del t^{\mu,0}} \,\eta^{\mu\nu} 
N(N-1)\frac{\del\If}{\del t^{\nu,0}}, \]
where $N:=\displaystyle{\sum_{\beta,j}}t^{\beta,j}\frac{\del}{\del t^{\beta,j}}$ is the differential operator which counts the number of marked points.
\end{corollary}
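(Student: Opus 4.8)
The statement is a purely formal consequence of the topological recursion relations of the preceding corollary, obtained by summing them against the auxiliary variables. The plan is: take the identity
\[ \frac{\del^3 \If}{\del t^{\alpha,i}\del t^{\beta,j}\del t^{\gamma,k}} = \frac{\del^2 \If}{\del t^{\alpha,i-1}\del t^{\mu,0}} \,\eta^{\mu\nu} \frac{\del^3 \If}{\del t^{\nu,0}\del t^{\beta,j}\del t^{\gamma,k}}, \]
multiply both sides by $t^{\beta,j}t^{\gamma,k}$, and sum over all indices $(\beta,j)$ and $(\gamma,k)$; the two resulting double sums are then to be recognized as $N(N-1)$ applied to a single first derivative of $\If$.

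The first ingredient is the tautology $\sum_{\beta,j} t^{\beta,j}\,\del G/\del t^{\beta,j} = N\,G$ for any formal series $G$, together with the observation that $N$ measures total $t$-degree: on the $t$-homogeneous part $\If_d$ of $\If$ it acts as multiplication by $d$, and this remains true with the graded sign conventions since each summand $t^{\beta,j}\del/\del t^{\beta,j}$ is the number operator of the variable $t^{\beta,j}$. Since $\del\If/\del t^{\alpha,i}$ lowers $t$-degree by one and $\del^2\If/\del t^{\alpha,i}\del t^{\beta,j}$ by two, summing the triple derivative first against $t^{\gamma,k}$ and then against $t^{\beta,j}$ picks up the factors $d-2$ and $d-1$ on the degree-$d$ part, which combine into
\[ \sum_{\beta,j}\sum_{\gamma,k} t^{\beta,j}t^{\gamma,k}\,\frac{\del^3\If}{\del t^{\alpha,i}\,\del t^{\beta,j}\,\del t^{\gamma,k}} \;=\; N(N-1)\,\frac{\del\If}{\del t^{\alpha,i}}. \]
Applying this with $(\alpha,i)$ replaced by $(\nu,0)$ gives the corresponding statement for the last factor in the recursion.

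Now I would carry out the contraction. The left-hand side of the recursion becomes $N(N-1)\,\del\If/\del t^{\alpha,i}$. On the right-hand side the factor $\eta^{\mu\nu}\,\del^2\If/\del t^{\alpha,i-1}\del t^{\mu,0}$ involves none of the summation variables $t^{\beta,j},t^{\gamma,k}$, so it comes out of both sums, and what remains is $N(N-1)\,\del\If/\del t^{\nu,0}$; this is exactly the asserted averaged recursion. There is no real analytic or combinatorial obstacle here — everything takes place at the level of formal power series and the content is entirely carried by the topological recursion relations already established. The only point that genuinely needs checking is the graded sign bookkeeping: that $N$ really counts $t$-degree and that pulling $\eta^{\mu\nu}\,\del^2\If/\del t^{\alpha,i-1}\del t^{\mu,0}$ past the summed-out variables produces no sign, both of which are immediate once the parities of the $t$-variables are fixed.
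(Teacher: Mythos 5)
Your argument is correct and is essentially the paper's own: the paper proves the corollary by the single observation that $[\del/\del t^{\beta_1,j_1},t^{\beta_2,j_2}]=\delta_{(\beta_1,j_1),(\beta_2,j_2)}$ gives the normal-ordered identity $N(N-1)=\sum t^{\beta_1,j_1}t^{\beta_2,j_2}\,\del_{\beta_1,j_1}\del_{\beta_2,j_2}$, and then contracts the pointwise recursion exactly as you do. Your derivation of that identity via Euler degree-counting on $t$-homogeneous components is just a repackaging of the same commutator computation, so there is nothing substantively different to flag.
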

For the proof observe that the commutator of $\displaystyle{\frac{\del}{\del t^{\beta_1,j_1}}}$ and $t^{\beta_2,j_2}$ is one when $(\beta_1,j_1)=(\beta_2,j_2)$ and vanishes in all other cases, so that  $$N(N-1)=\sum_{\beta_1,j_1}\sum_{\beta_2,j_2}t^{\beta_1,j_1}t^{\beta_2,j_2}\frac{\del}{\del t^{\beta_1,j_1}}\frac{\del}{\del t^{\beta_2,j_2}}.$$

\vspace{0.5cm}

\subsection{From Gromov-Witten theory to Floer homology}
Recall that the goal of this section is to translate the above topological recursion relations from Gromov-Witten theory to symplectic Floer theory. In order to do so, we recall in this subsection the well-known relation between Gromov-Witten theory and symplectic Floer theory. \\

First recall that the Floer cohomology $HF^*=HF^*(H)$ of a time-dependent Hamiltonian $H: S^1\times X\to \IR$ is defined as the homology of the chain complex $(CF^*,\del)$, where $CF^*$ is the vector space freely generated by the formal variables $q_{\gamma}$ assigned to all one-periodic orbits of $H$ with coefficients which are Laurent series in the variables $z_n$. On the other hand, the differential $\del: CF^*\to CF^*$ is given by counting elements in the moduli spaces $\CM(\gamma^+,\gamma^-)$ of cylinders $u:\IR\times S^1\to X$ satisfying the perturbed Cauchy-Riemann equation $\CR_{J,H}(u)=\del_s u+J_t(u)(\del_t u - X^H_t(u)) =0$ with a one-periodic family of almost complex structures $J_t$ and where $X^H_t$ denotes the symplectic gradient of $H_t$, and which converge to the one-periodic orbits $\gamma^{\pm}$ as $s\to\pm\infty$. \\

In the same way as the group of Moebius transforms acts on the solution space of Gromov-Witten theory and the moduli space is defined only after dividing out this obvious symmetries,  $\IR$ acts on the above space of Floer cylinders by translations in the domain, so that the moduli space is again defined after dividing out this natural action. On the other hand, since the Hamiltonian (and the almost complex structure) depends on the $S^1$-coordinate, it will become important that we do not divide out the action of the circle. \\

In order to prove the Arnold conjecture about the number of one-periodic orbits of $H$ one shows that the Floer cohomology groups are isomorphic to the quantum cohomology groups $QH^*(X)$ of the underlying symplectic manifold, which are defined as the vector space freely generated by formal variables $t^{\alpha}=t^{\alpha,0}$, assigned as before to a chosen basis of the singular cohomology of $X$, with coefficients which are Laurent series in the $z_n$-variables. Note that as vector spaces the only difference to the usual cohomology groups $H^*(X)$ lies in the different choice of coefficients. \\
 
One way to prove the above isomorphism is by studying the behavior of the moduli spaces of Floer cylinders as the Hamiltonian $H$ converges to zero. Since in the limit $H=0$ every point on $X$ corresponds to a closed orbit, i.e. the orbits are no longer isolated but appear in finite-dimensional manifolds, we arrive at a Morse-Bott version of Floer cohomology in the sense of Bourgeois, see \cite{B}. \\

It follows from the removable singularity theorem for (unperturbed) holomorphic curves that in the limit the moduli spaces of Floer trajectories $\CM(\gamma^+,\gamma^-)$ are replaced by the moduli spaces of holomorphic spheres $\CM_{0,2}(X)$ with two marked points from Gromov-Witten theory, where at each of the two points cohomology classes $\alpha^+, \alpha^-\in H^*(X)$ from the target manifold $X$ are pulled-back using the natural evaluation map. It follows that in the limit $H=0$ the chain space is given by the vector space freely generated by formal variables $t^{\alpha}$ with coefficients which are Laurent series in the $z_n$-variables.\\

While holomorphic spheres with two marked points contribute to the Gromov-Witten potential of $X$, it is very important to observe that in the Morse-Bott limit $H=0$ we  no longer divide out all symmetries $\IR\times S^1$ of the target, but only the $\IR$-shift as in Floer homology. It follows that the moduli space of (non-trivial) holomorphic spheres in the Morse-Bott limit always carries a non-trivial $S^1$-symmetry. In particular, it follows that differential for $H=0$ is indeed zero, so that the quantum cohomology groups agree with the underlying chain groups, which proves that the Floer cohomology groups are indeed isomorphic to the quantum cohomology groups, $HF^*(H)\cong QH^*(X)$.  \\

Next we want to compare the natural operations on Floer and quantum cohomology. First note that there is a natural product structure on quantum cohomology, the so-called quantum cup product, given by counting holomorphic spheres with three marked points,
\[ QH^*(X)\otimes QH^*(X)\to QH^*(X),\; (t^{\alpha},t^{\beta}) \mapsto \frac{\del^3 \If}{\del t^{\alpha} \del t^{\beta} \del t^{\mu}} \eta^{\mu\nu} t^{\nu},\]
which we can also view as an action of $QH^*(X)$ on itself. Keeping the position of the first two marked points still fixed, note that we assume that also the position of the third marked point is fixed to determine unique coordinates on $\CP$.  \\

On the other hand, in \cite{PSS} it was already shown that one can define the corresponding action of $QH^*(X)$ on the Floer cohomology groups $HF^*(H)$ by counting Floer cylinders $u:\IR\times S^1\to X$, $\CR_{J,H}(u)=0$ with an additional marked point with fixed position $(0,0)\in\RS$, as in the description of the moduli spaces of the quantum product. By the same arguments as used for the differential, note that when we pass to the Morse-Bott limit $H=0$ it is clear that this just gives us back the quantum cup product defined above. \\

Note that on the quantum cohomology side we can either assume that also the third marked point is fixed or it varies and we divide out the symmetry group $\RS$ of the two-punctured sphere afterwards. On the other hand, since in the Floer case we now only divide by $\IR$ and not by $\RS$ on the domain, it follows that in the second case with varying position the third marked point must still be constrained to lie on some ray $\IR\times\{t_0\} \subset \RS$, where without loss of generality we can assume that $t_0=0$. Keeping the picture of points with varying positions of marked points as in Gromov-Witten theory and SFT we hence have the following 

\begin{proposition} With respect to the above Morse-Bott correspondence, counting holomorphic spheres with three marked points in Gromov-Witten theory corresponds in symplectic Floer theory to counting Floer cylinders with one additional marked point constrained to $\IR\times\{0\}\subset \RS$. \end{proposition}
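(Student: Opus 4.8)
The plan is to establish the Morse--Bott correspondence precisely at the level of the relevant moduli spaces, matching both the domain geometry and the constraint on the marked point. First I would recall that, exactly as in the proof of $HF^*(H)\cong QH^*(X)$ sketched above, we degenerate the Hamiltonian $H\to 0$ and invoke the removable singularity theorem: in the limit, the three-marked-point Floer cylinders used to define the $QH^*$-action on $HF^*(H)$ converge to nodal configurations whose principal component is an honest $J$-holomorphic sphere in $X$ carrying the two asymptotic marked points (from the $s\to\pm\infty$ ends) together with the third, auxiliary marked point. The key bookkeeping point is that in this degeneration we have divided the domain only by the $\IR$-translation, not by the full $\RS$, so the limiting sphere still carries a residual $S^1$-action; this is what forces the extra marked point, which had fixed position $(0,0)\in\RS$ before the limit, to survive in the limit only as a point constrained to the ray $\IR\times\{0\}\subset\RS$ once we reinstate the varying-position viewpoint customary in Gromov--Witten theory and SFT.

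The second step is the converse matching: starting from the Gromov--Witten side, a holomorphic sphere $\CM_{0,3}(X)$ with three marked points, two of which carry pulled-back classes $\alpha^+,\alpha^-$ and the third being the insertion point of the quantum product, can be placed in standard coordinates on $\CP$ precisely because we are allowed to fix all three marked points (equivalently, quotient by the full $\RS$). Under the Morse--Bott identification the first two become the asymptotic ends of a Floer cylinder, and I would check that the remaining degree of freedom — the position of the third point modulo the symmetries that are \emph{not} quotiented out on the Floer side — is exactly one real parameter, namely translation along $\IR$, so the Floer-side marked point is pinned to a single $S^1$-value, which we normalize to $t_0=0$. This is a direct dimension count: the Floer cylinder moduli space $\CM(\gamma^+,\gamma^-)/\IR$ with one free interior marked point has two more real dimensions than $\CM(\gamma^+,\gamma^-)/\IR$, while constraining that point to $\IR\times\{0\}$ removes exactly one, matching the codimension of the quantum-product locus inside $\CM_{0,2}(X)$ obtained by fixing the third point.

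The main obstacle I expect is not the index/dimension arithmetic but the transversality and gluing analysis underlying the Morse--Bott degeneration: one must argue that the count of perturbed holomorphic cylinders with the extra constrained marked point is genuinely \emph{stable} as $H\to 0$, i.e.\ that no curves escape, bubble off in an uncounted way, or fail to be regular, so that the signed count on the Floer side literally equals the Gromov--Witten count of the corresponding spheres. Since we are following the convention of the rest of the paper and of \cite{PSS}, \cite{F2}, \cite{FR} of working up to the transversality issues that the polyfold project is expected to resolve, I would at this point invoke the Morse--Bott machinery of Bourgeois \cite{B} — which already handles exactly this kind of breaking of solutions into Floer pieces and Morse/holomorphic-sphere pieces — to conclude that the only limiting configurations are the claimed holomorphic spheres, and hence that the two counts agree. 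The verification of sign conventions via coherent orientations is routine and can be relegated to a remark, as can the observation that the normalization $t_0=0$ entails no loss of generality since any ray $\IR\times\{t_0\}$ is taken to $\IR\times\{0\}$ by the $S^1$-rotation of the target.
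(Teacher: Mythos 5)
Your proposal is correct and follows essentially the same route as the paper: the Morse--Bott degeneration $H\to 0$ identifies Floer cylinders with holomorphic spheres, and the mismatch between dividing by the full symmetry group $\RS$ on the Gromov--Witten side versus only the $\IR$-translation on the Floer side is exactly what forces the additional marked point onto the ray $\IR\times\{0\}$. The dimension count and the appeal to Bourgeois' Morse--Bott machinery are elaborations of what the paper leaves informal, not a different argument.
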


More generally, adding an arbitrary number of additional marked points to include the full Gromov-Witten potential, it follows that we indeed have the following 

\begin{proposition}  With respect to the above Morse-Bott correspondence, counting holomorphic spheres with three or more marked points in Gromov-Witten theory corresponds on the Floer side to counting Floer cylinders with additional marked points, where only the first marked point is constrained to $\IR\times\{0\}\subset \RS$.  \end{proposition}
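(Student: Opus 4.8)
The plan is to reduce the general statement with $r$ additional marked points to the case of three marked points treated in the preceding proposition, and then to organize the bookkeeping so that the asymmetry between the single constrained marked point and the remaining free ones is consistent with the Morse-Bott degeneration. First I would recall the set-up on the Gromov-Witten side: a holomorphic sphere with $r \geq 3$ marked points is viewed, after fixing the first three points $z_1, z_2, z_3$ to $0, 1, \infty$ on $\CP$, as a configuration where the remaining $r-3$ points are free and vary over $\CP \setminus \{0,1,\infty\}$; integrating the pulled-back forms $\ev_i^*\theta_{\alpha_i}$ over all marked points gives the contribution to $\IF$. On the Floer side, the preceding proposition already identifies the count of holomorphic spheres with three marked points with the count of Floer cylinders carrying one marked point constrained to $\IR \times \{0\} \subset \RS$, the constraint reflecting exactly the loss of the $S^1$-symmetry that is present in the Morse-Bott limit but absent in honest Floer theory.

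The key step is then to add the extra $r-3$ marked points back in and track what happens to them under the degeneration $H \to 0$. Here I would argue as in the proof of the analogous statement for the differential (the $r=2$ case, giving $HF^* \cong QH^*$) and for the quantum product (the $r=3$ case): in the Morse-Bott limit a Floer cylinder with $k$ free marked points converges, after removing the singularities at the two ends, to a holomorphic sphere with $2 + k$ marked points, where the two "boundary" marked points carry the asymptotic cohomology classes and the $k$ interior ones carry the forms $\theta_{\alpha}$. Conversely, a holomorphic sphere with $r$ marked points, $r-1$ of which are free after we use up the $\RS$-symmetry by fixing one marked point to $\IR \times \{0\}$, is produced in the limit. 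The point to emphasize is that on the Floer side we quotient only by $\IR$, not by $\RS$, so exactly one marked point must be pinned to the ray $\IR \times \{0\}$ to rigidify the remaining circle of symmetry, while all other marked points genuinely vary over $\RS$ — matching the $r-1$ free points on the sphere side once the three-point normalization is accounted for. I would then spell out the correspondence of moduli spaces, checking that Fredholm indices (hence dimensions) match and that the evaluation maps intertwine under the gluing/degeneration.

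The main obstacle I expect is the compatibility of orientations and the careful treatment of the boundary/bubbling strata of the compactified moduli spaces: when additional marked points are present, the degenerating family can also develop sphere bubbles carrying subsets of the marked points, and one must check that these do not contribute extra terms beyond those already present in the Gromov-Witten count — i.e. that the Morse-Bott correspondence is an isomorphism of chain-level structures and not merely of leading-order counts. This is where one invokes the Morse-Bott machinery of Bourgeois (\cite{B}) and the transversality discussion underlying \cite{PSS}; as elsewhere in this paper, the analytical details of transversality are taken on faith pending the polyfold foundations. A secondary, more bookkeeping-type obstacle is simply notational: keeping straight which marked points are "fixed" (the original three, or equivalently on the Floer side the single pinned one plus the two asymptotic ends) and which are "free", so that the statement "only the first marked point is constrained to $\IR \times \{0\}$" comes out with the right count of constraints. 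Once the three-point case of the preceding proposition is in hand, adding marked points is formally a routine iteration of the same degeneration argument, so I would present this proof as a short inductive extension rather than a separate construction.
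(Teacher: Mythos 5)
Your proposal follows essentially the same route as the paper: the paper's entire argument for this proposition is the observation (already made for the three-point case) that on the Floer side one divides only by the $\IR$-translation and not by the full $\RS$-symmetry of the twice-punctured sphere, so exactly one additional marked point must be pinned to the ray $\IR\times\{0\}$ to rigidify the residual $S^1$, while all remaining marked points vary freely; the extension to arbitrarily many marked points is then stated as immediate, just as you present it. One small bookkeeping caution: a sphere with $r$ marked points corresponds to a cylinder with $r-2$ additional marked points (two become the punctures), of which one is constrained and $r-3$ are free, so your phrase ``$r-1$ free points on the sphere side'' should be adjusted accordingly, though this does not affect the substance of the argument.
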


\vspace{0.5cm}

\subsection{Topological recursion in Floer homology}
With the above translation scheme from Gromov-Witten theory to symplectic Floer theory in hand, we now will transfer the topological recursion relations from Gromov-Witten theory, outlined in the first subsection, to symplectic Floer theory. \\

But before we can give a precise algebraic statement, the propositions of the last subsection suggest that we first need to enrich the algebraic formalism as follows. As in Gromov-Witten theory and SFT we now enrich the Floer complex by requiring that the underlying chain space $CF^*$ is again the vector space freely generated by the formal variables $q_{\gamma}$ but with coefficients which are now not only Laurent series in the $z_n$-variables, but also formal power series in the $t^{\alpha,j}$-variables. \\

Denote by $\CM_r(\gamma^+,\gamma^-)$ the moduli space of Floer trajectories with $r$ marked points. Introducing $r$ tautological line bundles $\LL_{i,r}$ on $\CM_r(\gamma^+,\gamma^-)$ and coherent collections of sections as in cylindrical contact homology (and without distinguishing between constrained and unconstrained points), we again denote by $\CM_r^{(j_1,\ldots,j_r)}(\gamma^+,\gamma^-)$ the corresponding zero divisors. \\

As in cylindrical contact homology we can then enrich the differential $\del: CF^*\to CF^*$ by pulling back differential forms from the target $X$ and introducing descendants, 
\[\del(q_{\gamma^+}) = \sum \int_{\CM^{(j_1,\ldots,j_r)}_r(\gamma^+,\gamma^-)} \ev_1^*\theta_{\alpha_1}\wedge \ldots \wedge \ev_r^*\theta_{\alpha_r} \; t^I  q_{\gamma^-} z^d\]
with $t^I=t^{\alpha_1,j_1} \ldots t^{\alpha_r,j_r}$. As in cylindrical contact homology we then define 
\[ \del_{(\alpha,i)} := \frac{\del}{\del t^{\alpha,i}}\circ \del.\]

As we have seen in the last subsection we further need to include cylinders with one constrained (to $\IR\times\{0\}$) marked point. In order to distinguish these new linear maps from the linear maps $\del_{\alpha,p}$ obtained by counting holomorphic cylinders with one \emph{un}constrained marked point, we denote them by $\del_{\check{\alpha},p}: CF^*\to CF^*$. In the same as for $\del_{\alpha,p}$  it can be shown that $\del_{\check{\alpha},p}$ descends to a linear map on homology, and commutes on homology, $[\del_{\check{\alpha},p},\del_{\check{\beta},q}]_-=0$, with respect to the graded commutator $[f,g]_-=f\circ g - (-1)^{\deg(f)\deg(g)} g\circ f$.\\

With this we can formulate our proposition about topological recursion in symplectic Floer theory as follows. In contrast to Gromov-Witten theory, we now obtain three different equations, depending on whether we remember both punctures, one puncture and one additional marked point or two marked points. On the other hand, inside their class we want to treat the additional marked point in a symmetric way as in the averaged version of topological recursion relations in Gromov-Witten theory stated in subsection 3.1. Note that this is needed for coherence as we will show in the proof of the corresponding relations for non-equivariant cylindrical contact homology in subsection 4.3.

\begin{proposition}\label{TRR-Floer} With respect to the above Morse-Bott correspondence, the topological recursion relations from Gromov-Witten theory have the following translation to symplectic Floer theory: For three \emph{different} non-generic special choices of coherent sections we have 
\begin{itemize}
\item[(2,0):] $$ \del_{(\check{\alpha},i)} = \frac{\del^2 \If}{\del t^{\alpha,i-1}\del t^{\mu}} \eta^{\mu\nu} \del_{\check{\nu}}$$
\item[(1,1):] $$ N\,\del_{(\check{\alpha},i)} = \frac{\del^2 \If}{\del t^{\alpha,i-1}\del t^{\mu}} \eta^{\mu\nu} N\, \del_{\check{\nu}} + \frac{1}{2}[\del_{(\check{\alpha},i-1)}, \check{N}\,\del]_+ $$
\item[(0,2):] $$N(N-1)\,\del_{(\check{\alpha},i)} = \frac{\del^2 \If}{\del t^{\alpha,i-1}\del t^{\mu}} \eta^{\mu\nu} N(N-1)\, \del_{\check{\nu}} +[\del_{(\check{\alpha},i-1)},\check{N}(N-1)\,\del]_+$$
\end{itemize}
\vspace{0.5cm}
where $N:=\displaystyle{\sum_{\beta,j}}t^{\beta,j}\frac{\del}{\del t^{\beta,j}}$, $\check{N}:=\displaystyle{\sum_{\beta,j}} t^{\beta,j}\frac{\del}{\del \check{t}^{\beta,j}}$ and $[f,g]_+=f\circ g +(-1)^{\deg(f)deg(g)} g\circ f$ denotes a graded anti-commutator with respect to the operator composition. Notice that, $\check{N}$ and $\del$ having odd degree, in the above formulas the anti-commutator always corresponds to a sum.
\end{proposition}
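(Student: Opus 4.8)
The plan is to transfer, via the Morse-Bott correspondence of the previous subsection, the three localization statements encoded by the averaged topological recursion relations in Gromov-Witten theory. On the Gromov-Witten side we have a special non-generic section in $\LL_{i,r}$ whose zero divisor $\CM_{0,r}^{(0,\ldots,1,\ldots,0)}(X)$ localizes on the strata where the $i$.th marked point sits on a different component than two distinguished points. For the Floer translation, Propositions 3.7 and 3.8 tell us that a Gromov-Witten sphere with $\geq 3$ marked points becomes a Floer cylinder with additional marked points of which the first is constrained to $\IR\times\{0\}\subset\RS$; the distinguished points $j,k$ become the two punctures, one constrained marked point, or two constrained marked points respectively, giving the three cases $(2,0)$, $(1,1)$, $(0,2)$. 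The first step is therefore to run the Gromov-Witten construction of the special section in the Morse-Bott limit $H=0$, noting that the two forgotten-marked-point positions $j,k$ are now replaced geometrically by punctures or constrained points, and to turn on $H$ again so that these become the familiar objects of Floer theory. This produces, for each of the three cases, a special coherent collection of sections in the $\LL_{i,r}$ over the $\CM_r(\gamma^+,\gamma^-)$ whose associated descendant divisor is the codimension-one boundary stratum consisting of a broken configuration: a genuine Floer cylinder glued to a Gromov-Witten sphere carrying the $i$.th (descendant-lowered) marked point and a distinguished point mediating the node, exactly as in the Gromov-Witten recursion.

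Next I would read off the algebraic identity that such a localization imposes on $\del_{(\check\alpha,i)}$. Since $\del_{(\check\alpha,i)} = \frac{\del}{\del t^{\alpha,i}}\circ\del$ counts Floer cylinders with one constrained marked point carrying a descendant of order $i$ (plus arbitrarily many unconstrained points absorbed into the coefficients), the localized divisor decomposes $\del_{(\check\alpha,i)}$ as a composition of a Gromov-Witten factor --- which, since the sphere bubble lives in the Morse-Bott limit and carries the descendant of order $i-1$, two insertions from $\Theta$, and the node, is precisely $\frac{\del^2\If}{\del t^{\alpha,i-1}\del t^{\mu}}\eta^{\mu\nu}$ --- with the Floer factor, which is $\del_{\check\nu}$ (one constrained marked point, no descendant) in case $(2,0)$. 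The subtlety that forces the extra anti-commutator terms in cases $(1,1)$ and $(0,2)$ is bookkeeping of \emph{which} cylinder of the broken configuration inherits the constrained marked point(s): when the distinguished point $j$ (or $j$ and $k$) is itself a constrained marked point rather than a puncture, it can lie on either the Floer piece or the sphere piece, and summing over the node position on the two components of the broken cylinder produces the two orderings $\del_{(\check\alpha,i-1)}\circ(\check N\,\del)$ and $(\check N\,\del)\circ\del_{(\check\alpha,i-1)}$. Because $\check N$ and $\del$ are odd, these combine into the graded anti-commutator $\frac{1}{2}[\del_{(\check\alpha,i-1)},\check N\,\del]_+$ in case $(1,1)$ and into $[\del_{(\check\alpha,i-1)},\check N(N-1)\,\del]_+$ in case $(0,2)$; the combinatorial factors $N$, $N(N-1)$ are the same averaging factors that appear in Corollary 3.5, counting the number of ways to designate one or two marked points among all of them as the distinguished constrained points.

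The main obstacle I expect is \emph{coherence}: the special sections constructed on the lower-dimensional moduli spaces must restrict correctly to the sections on the boundary of the higher-dimensional ones, and --- crucially --- they must be built so as to treat the unconstrained additional marked points symmetrically, since in the Floer (and non-equivariant cylindrical) setting one cannot use two ad hoc auxiliary $t$-variables as in the non-averaged Gromov-Witten recursion. So the real work is to check that the Morse-Bott degeneration of the Gromov-Witten special section, together with the symmetrization built into the $N$-operators, yields an honestly coherent collection over the whole tower of Floer moduli spaces, and that this coherence survives the passage to homology so that the operator identities hold on $HF^*$. Once coherence is in place, passing to homology and deriving the commutation of the $\del_{(\check\alpha,i)}$ from it is routine, by the same argument that gave $[\del_{\alpha,p},\del_{\beta,q}]_-=0$ earlier; and I would finally note, as the section promises, that the proof here is really a template for the non-equivariant cylindrical case of subsection 4.3, so it is worth phrasing the construction of the sections in a way that only uses the $S^1$-action on the domain and not any feature special to $X$ being closed symplectic.
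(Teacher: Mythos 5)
Your proposal follows essentially the same route as the paper: translate the Gromov--Witten localization of the $\psi$-class divisor through the Morse--Bott correspondence, split into the three cases according to whether the two reference points $j,k$ are punctures or additional (constrained) marked points, and read off the broken configurations --- node-attached sphere giving the $\frac{\del^2\If}{\del t^{\alpha,i-1}\del t^{\mu}}\eta^{\mu\nu}$ factor versus puncture-attached cylinder giving the graded anti-commutator terms --- with the $N$, $N(N-1)$ factors coming from the symmetrized averaging. Your remarks on coherence and on deferring the rigorous construction of the symmetric sections to the non-equivariant cylindrical case match the paper's own remark that the honest proof is carried out in subsection 4.3.
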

\begin{proof}
In order to translate the localization result of Gromov-Witten theory to symplectic Floer theory, we first replace the holomorphic sphere with three or more marked points by a Floer cylinder with one marked point constrained to $\IR\times\{0\}$ and possibly other unconstrained additional marked points, where we assume that the constrained marked point agrees with the $i$.th marked point carrying the descendant. In order to obtain the three different equations we have to decide whether the $j$.th and $k$.th marked point agree with the positive or negative puncture or some other additional marked point and then use the localization theorem from the first subsection, which states that the zero divisor localizes on nodal spheres with two smooth components, where the $i$.th marked point lies on one component and the $j$.th and the $k$.th marked point lie on the other component. \\

In order to obtain equation (2,0) we remember both punctures ($j=+, k=-$). While for each holomorphic curve in the corresponding divisor $\CM^{i,(+,-)}_r(\gamma^+,\gamma^-)\subset \CM_r(\gamma^+,\gamma^-)$ the component with the $j$.th and the $k$.th marked point is a Floer cylinder, the other component with the $i$.th marked point is a sphere, since both components are still connected by a node (and not a puncture) by the compactness theorem in Floer theory. \\

On the other hand, in order to obtain equation (1,1), we remember one of the two punctures, $j=+$ (or $j=-$) and another marked point. While for each holomorphic curve in $\CM^{i,(+,k)}_r(\gamma^+,\gamma^-)\subset\CM_r(\gamma^+,\gamma^-)$ the component carrying the $j$.th and the $k$.th marked point still needs to be a Floer cylinder and not a holomorphic sphere as the $j$.th marked point is a puncture, the other component carrying the $i$.th marked point can either be a holomorphic sphere or Floer cylinder, depending on whether both components are connected by a node or a puncture. Note that in the second case this connecting puncture is neccessarily the negative puncture for the Floer cylinder with the $i$.th marked point and the positive puncture for the Floer cylinder with the $k$.th marked point. On the other hand, both Floer cylinder carry a special marked point, namely the $i$.th or the $k$.th marked point, respectively, which by the above Morse-Bott correspondence are constrained to $\IR\times\{0\}$. \\

Finally, in order to establish equation (0,2), we remember none of the two punctures. Since only Floer cylinders and holomorphic spheres appear in the compactification, it follows that for the above equation we must just sum over all choices for both components being either a cylinder or a sphere and again use the above Morse-Bott correspondence.
\end{proof} 

\begin{remark} Note that in order to make the above proof precise, one needs to rigorously establish an isomorphism between Gromov-Witten theory and symplectic Floer theory \emph{beyond} the isomorphism between quantum cohomology and Floer cohomology groups together with the action of the quantum cohomology on them proven in \cite{PSS} (involving the full 'infinity structures'). On the other hand, while we expect that the Morse-Bott picture from above should lead to such an isomorphism in an obvious way, we are satisfied with the level of rigor (and call our result 'proposition' instead of 'theorem'), not only since it should just serve as a motivation for our topological recursion result in non-equivariant cylindrical contact homology, but also since our rigorous proof for that case will in turn directly lead to a (rigorous) proof of this proposition. \end{remark}

\vspace{0.5cm}
 
\section{Topological recursion in non-equivariant cylindrical homology}

Motivated by the topological recursion result in symplectic Floer homology discussed in the last section, we want to prove the corresponding topological recursion result for cylindrical contact homology. Recall that for the well-definedness of cylindrical contact homology we will assume that there are no holomorphic planes and that there are either no holomorphic cylinders with two positive or no holomorphic cylinders with two negative ends. Recall further that this is satisfied in the contact case when there are no holomorphic planes or when the stable Hamiltonian manifold is a symplectic mapping torus. Since, in contrast to Floer homology, the closed orbits in cylindrical contact homology are not parametrized by $S^1$, it turns out that we need to work with a non-$S^1$-equivariant version of cylindrical contact homology, where we follow the ideas in Bourgeois-Oancea's paper \cite{BO}. Note that in the contact case our results indeed immediately generalize from cylindrical contact homology to linearized contact homology which depends on a symplectic filling and is defined for any fillable contact manifold, e.g. since it is still isomorphic to the positive symplectic homology which only counts true cylinders.

\vspace{0.5cm}

\subsection{Non-equivariant cylindrical homology}

Recall that in the last section about topological recursion in Gromov-Witten theory and symplectic Floer theory we assumed that the underlying symplectic manifold $X$ is closed, i.e. compact and without boundary. On the other hand, it is well-known that one can also define a version of Floer homology for compact manifolds $X$ with contact boundary $V=\del X$, which is called symplectic homology. Note that here and in what follows we make no distinction between $X$ and its completion $X\cup\IR^+\times \del X$. It is defined, see also \cite{BO} for details, as a direct limit of Floer homology groups $SH_*(X)=\overrightarrow{\lim}\; HF_*(H)$ over an ordered set of admissible time-dependent Hamiltonians $H: S^1\times X\to\IR$. \\

On the other hand, it follows from the definition of the set of admissible Hamiltonians that, in the case when the Hamiltonian is time-independent, the one-periodic orbits of $H$ are critical points of $H$ in the interior of $X$ or correspond to closed orbits of the Reeb vector field on the contact boundary $V=\del X$, where both sets of closed orbits can be distinguished by the symplectic action. While it is easily seen that the chain subcomplex generated by the critical points computes the singular cohomology of $X$, $H^*(X)\cong H_{\dim X-*}(X,\del X)$, it follows using the action filtration that one can define $SH^+_*(X)$, the positive part of symplectic homology, or positive symplectic homology in short, as the homology of the corresponding quotient complex.  \\

While for time-independent Hamiltonians the generators of positive symplectic homology correspond to closed orbits of the Reeb vector field on $V=\del X$, note that in the definition of (positive) symplectic homology, in the same way as in Floer homology, it is explicitly assumed that the underlying Hamiltonians $H: S^1\times X\to \IR$ are $S^1$-dependent. In particular, it follows that, in contrast to the cylindrical homology $HC_*(V)$, the positive symplectic homology $SH^+_*(X)$ is generated by parametrized instead of unparametrized orbits. \\
  
In their paper \cite{BO} the authors have proven that positive symplectic homology and cylindrical contact homology are related via a Gysin-type sequence. To be more precise, in their paper they use linearized contact homology instead of cylindrical contact homology, since the first one is defined for all contact manifolds with a strong symplectic filling, and agrees with cylindrical contact homology as long as there no holomorphic planes in the filling. For the proof Bourgeois and Oancea have defined a non-equivariant version of linearized contact homology, which is generated by parametrized instead of unparametrized orbits, and shown that it is isomorphic to positive symplectic homology. \\

The definition of this non-equivariant version of cylindrical contact homology is motivated by the following Morse-Bott approach to positive symplectic homology starting from an admissible set of time-\emph{in}dependent Hamiltonians $H^0: X\to\IR$. Recall that in this case the set of closed parametrized orbits of the Hamiltonian vector field is given by the set of closed parametrized Reeb orbits, which in turn is obtained from the usual set of set of closed (unparametrized) Reeb orbits by assigning to every closed unparametrized orbit the corresponding $S^1$-family of closed parametrized orbits. While in this case the closed Hamiltonian orbits are no longer nondegenerate, in particular, no longer isolated as required in the usual definition of symplectic (Floer) homology,  but still come in $S^1$-families, the authors of \cite{BO} resolved the problem by applying a Morse-Bott approach as in Bourgeois' thesis \cite{B}.\\

Indeed, in order to obtain from a time-independent Hamiltonian $H^0: X\to\IR$ a family of time-dependent Hamiltonians $H^{\epsilon}: S^1\times X\to\IR$ with only nondegenerate closed orbits, one can choose for each closed (unparametrized) orbit $\gamma$ of the Hamiltonian $H^0$ a self-indexing Morse function $f_{\gamma}: \gamma \cong S^1\to\IR$, i.e. with one maximum and one minimum. Further choosing for each closed orbit a tubular neigborhood $(r,t)\in (-1,+1)\times S^1 \hookrightarrow X$ such that $\{0\}\times S^1$ corresponds to $\gamma$, and a smooth cut-off function $\varphi: (-1,+1)\to\IR$, so that we can extend each chosen Morse function $f_{\gamma}: S^1\to\IR$ to a function on $X$ by $\tilde{f}_{\gamma}(r,t) = \varphi(r)\cdot f_{\gamma}(t)$ and zero-extension, we can define the family of perturbed time-dependent Hamiltonians $H^{\epsilon}: S^1\times X\to\IR$ by $H^{\epsilon}(t,\cdot) = H^0+\epsilon\cdot \sum_{\gamma} \tilde{f}_{\gamma}(\cdot,t)$.  \\

It can be shown that the closed (parametrized) orbits of each Hamiltonian $H^{\epsilon}$ are in one-to-one correspondence with the critical points of the Morse functions $f_{\gamma}$. Since all the Morse functions $f_{\gamma}: S^1\to\IR$ were chosen to have precisely one maximum $\hat{\gamma}$ and one minimum $\check{\gamma}$, it follows that each closed unparametrized orbit of the time-independent Hamiltonian $H^0:X\to\IR$ gives rise to \emph{two} closed parametrized orbits $\hat{\gamma}_{\epsilon}$ and $\check{\gamma}_{\epsilon}$ for each time-dependent Hamiltonian $H^{\epsilon}: S^1\times X\to\IR$. \\

Similarly to the approach in Bourgeois' thesis \cite{B}, in \cite{BO} the authors define a Morse-Bott complex for positive symplectic homology $SH^+_*(X)$ based on admissible time-independent Hamiltonians $H^0$ whose generators are the critical points $\hat{\gamma}$ and $\check{\gamma}$ on all closed unparametrized orbits $\gamma$ of $H^0$ and whose differential counts 'cascades' consisting of solutions to the Floer equation for $H^0$ connected by gradient trajectories of the Morse functions $f_{\gamma}$. On the other hand, it is clear that this immediately generalizes to cylindrical contact homology in the sense that one can define a non-equivariant version of cylindrical contact homology by choosing self-indexing Morse functions $f_{\gamma}$ for all closed (unparametrized) Reeb orbits $\gamma$. \\

Indeed, with this we can define non-equivariant cylindrical contact homology as the homology of the Morse-Bott complex whose generators are the critical points $\hat{\gamma}$ and $\check{\gamma}$ on all closed Reeb orbits $\gamma$ and whose differential counts 'cascades' consisting of holomorphic cylinders in $\IR\times V$ connected by gradient trajectories of the Morse functions $f_{\gamma}$. In other words, introducing two formal variables $\hat{q}_{\gamma}$ and $\check{q}_{\gamma}$ for each closed Reeb orbit $\gamma$ (corresponding to the two critical points $\hat{\gamma}$ and $\check{\gamma}$), the chain space for non-equivariant cylindrical contact homology $HC^{\textrm{non-}S^1}_*(V)$ is the vector space generated by the formal variables $\hat{q}_{\gamma}$ and $\check{q}_{\gamma}$ with coefficients which are formal power series in the $t^{\alpha,j}$-variables and Laurent series in the $z_n$-variables. Note that the chain space naturally splits, $$C^{\textrm{non-}S^1}_*=\hat{C}_*\oplus\check{C}_*,$$ where $\hat{C}_*$, $\check{C}_*$ is generated by the formal variables $\hat{q}_{\gamma}$, $\check{q}_{\gamma}$, respectively. Defining the differential $\del: \hat{C}_*\oplus\check{C}_*\to\hat{C}_*\oplus\check{C}_*$ as described above by counting Morse-Bott cascades, we furthermore 
define as in Floer homology, \[ \del_{(\alpha,i)} := \frac{\del}{\del t^{\alpha,i}}\circ \del: \hat{C}_*\oplus\check{C}_*\to\hat{C}_*\oplus\check{C}_*.\]
Furthermore as for Floer homology we further need to include cylinders with one constrained (to $\IR\times\{0\}$) marked point. In order to distinguish these new linear maps from the linear maps $\del_{\alpha,p}$ obtained by counting holomorphic cylinders with one \emph{un}constrained marked point, we again denote them by $\del_{\check{\alpha},p}: CF^*\to CF^*$. In the same way as for $\del_{\alpha,p}$,  it can be shown that $\del_{\check{\alpha},p}$ descends to a linear map on non-equivariant cylindrical homology.

\vspace{0.5cm}

\subsection{Topological recursion in non-equivariant cylindrical homology}

With the reasonable assumption in mind that the topological recursion relations in Floer homology also hold true in (positive) symplectic homology and hence also carry over to non-equivariant cylindrical contact homology, we now formulate our main theorem. Since we assumed that there are no holomorphic planes in $\IR\times V$ and hence the usual Gromov compactness result holds we define the Gromov-Witten potential $\If$ of a stable Hamiltonian manifold as the part of the rational SFT Hamiltonian $\Ih$ of $V$ counting holomorphic spheres without punctures, $\If = \Ih|_{p=0=q}$. Note that in the contact case this agrees with the Gromov-Witten potential of a point due to the maximum principle and is determined by the Gromov-Witten potential of the symplectic fibre in the case when the stable Hamiltonian manifold is a symplectic mapping torus as every holomorphic map $\CP\to\IR\times M_{\phi}\to \IR\times S^1\cong \IC^*$ is constant by Liouville's theorem.

\begin{theorem}\label{TRR-noneqCH} For three \emph{different} non-generic special choices of coherent sections the following three \emph{topological recursion relations} hold in \emph{non-equivariant} cylindrical contact homology
\begin{itemize}
\item[(2,0):] $$ \del_{(\check{\alpha},i)} = \frac{\del^2 \If}{\del t^{\alpha,i-1}\del t^{\mu}} \eta^{\mu\nu} \del_{\check{\nu}}$$
\item[(1,1):] $$ N\,\del_{(\check{\alpha},i)} = \frac{\del^2 \If}{\del t^{\alpha,i-1}\del t^{\mu}} \eta^{\mu\nu} N\, \del_{\check{\nu}} + \frac{1}{2}[\del_{(\check{\alpha},i-1)}, \check{N}\,\del]_+ $$
\item[(0,2):] $$N(N-1)\,\del_{(\check{\alpha},i)} = \frac{\del^2 \If}{\del t^{\alpha,i-1}\del t^{\mu}} \eta^{\mu\nu} N(N-1)\, \del_{\check{\nu}} +[\del_{(\check{\alpha},i-1)},\check{N}(N-1)\,\del]_+$$
\end{itemize}
\vspace{0.5cm}
where $N:=\displaystyle{\sum_{\beta,j}}t^{\beta,j}\frac{\del}{\del t^{\beta,j}}$, $\check{N}:=\displaystyle{\sum_{\beta,j}} t^{\beta,j}\frac{\del}{\del \check{t}^{\beta,j}}$ and $[f,g]_+=f\circ g +(-1)^{\deg(f)deg(g)} g\circ f$ denotes a graded anti-commutator with respect to the operator composition. Notice that, $\check{N}$ and $\del$ having odd degree, in the above formulas the anti-commutator always corresponds to a sum.
\end{theorem}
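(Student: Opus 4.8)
The plan is to carry out, now rigorously in the Morse-Bott cylindrical setting, the same localization argument for the tautological line bundles that underlies the topological recursion relations in Gromov-Witten theory (the localization Theorem of Subsection 3.1 and its two Corollaries) and that was used to motivate Proposition~\ref{TRR-Floer}; a byproduct of this rigorous argument will in turn be a proof of that proposition.

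First I would fix admissible time-independent data and work with the moduli spaces $\CM_r(\gamma^+,\gamma^-)$ of holomorphic cylinders in $\IR\times V$ with $r$ additional marked points, together with their SFT compactifications. Because we have assumed there are no holomorphic planes, the compactness theorem of \cite{BEHWZ} forces the only codimension-one degenerations to be (i) two-level (more generally multi-level) buildings whose levels are again holomorphic cylinders in $\IR\times V$ glued along a breaking Reeb orbit, and (ii) bubbling off of closed holomorphic spheres $\CP\to\IR\times V$ attached by a node. In the non-equivariant model of \cite{BO} the generators are the pairs $\hat q_\gamma,\check q_\gamma$ and the differential counts cascades, so I would lift the tautological line bundles $\LL_{i,r}$ and the notion of coherent collections of sections $(s_j)$ of \cite{F2} to this cascade compactification, checking that coherence is compatible both with the SFT breaking and with the gradient-flow gluing of cascades. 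Then, exactly as in Gromov-Witten theory, I would build a special non-generic section of $\LL_{i,r}$ --- supported near the descendant marked point --- by pulling back the vanishing construction under the forgetful map $\CM_r(\gamma^+,\gamma^-)\to\CM_{0,3}=\{\mathrm{pt}\}$ that remembers only three marked points $i,j,k$; its zero divisor then localizes on the boundary strata in which the $i$-th marked point (which, by the Morse-Bott correspondence of Section 3, is the one constrained to $\IR\times\{0\}$) is separated from the $j$-th and $k$-th. Since coherence forces a symmetric treatment of all additional marked points, I would average over all admissible choices of the pair $j,k$; this averaging is precisely what replaces $\frac{\del}{\del t^{\alpha,i}}$ by the operators weighted by $N$, $N(N-1)$ and $\check N$ that appear in the statement, just as in the averaged Corollary of Subsection 3.1.

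The three cases then read off according to whether $\{j,k\}$ consists of the two punctures, of one puncture and one additional marked point, or of two additional marked points. In case $(2,0)$ the component carrying $j,k$ must be a genuine cylinder (it contains both punctures) and the component carrying $i$ is a holomorphic sphere attached to it by a node; summing over a homology basis via the Poincar\'e pairing $\eta^{\mu\nu}$, the sphere contributes the descendant two-point genus-zero invariant $\frac{\del^2\If}{\del t^{\alpha,i-1}\del t^\mu}$ --- the descendant index dropping from $i$ to $i-1$ by the standard comparison of $\psi$-classes on the boundary divisor --- and the cylinder contributes $\del_{\check\nu}$. In cases $(1,1)$ and $(0,2)$ the component carrying the $i$-th marked point may instead be another cylinder joined to the rest by a breaking Reeb orbit rather than by a node; such a two-level building contributes $\del_{(\check\alpha,i-1)}$ composed with $\check N\,\del$ (respectively $\check N(N-1)\,\del$), and because the level carrying the descendant may lie above or below the other level this contribution is symmetrized into the graded anticommutator of the statement, which --- $\check N$ and $\del$ both having odd degree --- is the sum displayed there, the combinatorial factor $\tfrac12$ in $(1,1)$ coming from the averaging. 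Finally I would verify that all of these identities of linear maps on $\hat C_*\oplus\check C_*$ descend to non-equivariant cylindrical homology, as already noted for the operators $\del_{\alpha,p}$.

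I expect the main difficulty to be twofold. First, the rigorous construction of the special coherent collections of sections in the non-equivariant cylindrical setting: unlike in Gromov-Witten theory the compactified moduli spaces have codimension-one boundary of genuinely mixed type --- SFT breakings, Morse-Bott gradient cascades, and sphere bubbles --- and the special sections must be chosen so that the localization of their zero loci is simultaneously compatible with all of these and with the inductively fixed (coherent) choices on lower-dimensional strata. Second, the combinatorial bookkeeping: one must check that the constrained/unconstrained distinction for the auxiliary marked point ($\del_{\check\nu}$ versus $\del_\nu$) is transported correctly through a breaking, that the averaging over $\{j,k\}$ produces exactly the operators $N$, $N(N-1)$, $\check N$ with the precise numerical coefficients, and that the two ways of placing the descendant level in a broken building assemble into the stated graded anticommutators rather than some other combination.
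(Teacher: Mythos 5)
Your overall strategy --- localizing the zero divisor of a special, averaged coherent (multi)section of $\LL_{i,r}$ on the boundary strata of $\CM_r(\gamma^+,\gamma^-)$ and sorting the contributions by whether the remembered pair $\{j,k\}$ consists of two punctures, one puncture and one marked point, or two marked points --- is indeed the strategy of the paper, and you correctly flag the two main difficulties. But you leave both unresolved, and they are exactly where the content of the proof lies. First, the forgetful map ``$\CM_r(\gamma^+,\gamma^-)\to\CM_{0,3}$ remembering $i,j,k$'' that you want to pull back along does not exist on the space of maps when $j$ or $k$ is a puncture: one cannot forget a puncture of a holomorphic cylinder. The paper's construction therefore factors through the projection $\sigma$ to the Deligne--Mumford space $\CM_{0,n}$, $n=r+|\Gamma^+|+|\Gamma^-|$, which forgets the map and the asymptotic markers first (this uses the standing assumption that there are no holomorphic disks bounded by a Reeb orbit, so that every level of a broken configuration remains stable after forgetting the map, and it introduces the correction divisor $D_i$ of formula (\ref{loc1})). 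The coherent collections are then built on $\CM_{0,n}$ itself, where coherence must be imposed on every real-codimension-two nodal divisor; making the averaged multisection with zero locus $\frac{(n-3)!}{(n-1)!}\sum\frac{k!}{(k-2)!}D_{(I|J)}$ simultaneously generic and coherent is a genuine perturbation problem (the intersection-theoretic analysis of Example \ref{DMcoherence}), not something that follows ``exactly as in Gromov--Witten theory''. Moreover the three relations do not come from one averaged section: one needs three separate constructions on $\CM_{0,P+1}$, $\CM_{0,P+2}$, $\CM_{0,P+3}$ (with $P=|\Gamma^+|+|\Gamma^-|$), averaged also over the $r-1$, resp.\ $\tfrac{(r-1)(r-2)}{2}$, forgetful projections, and then suitable linear combinations of the resulting zero-locus formulas.

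Second, and more importantly, you import the marked point constrained to $\IR\times\{0\}$ from the Morse--Bott heuristic of Section 3 instead of deriving it, which hides a codimension mismatch that your sketch does not resolve. The zero locus of a section of $\LL_{i,r}$ has codimension two, whereas a two-level stratum of $\CM_{0,r,A}(\Gamma^+,\Gamma^-)$ has codimension one; so the limit of the perturbed zero loci cannot simply \emph{be} the two-level stratum, it must be a codimension-one subset of it. The missing dimension is the angular gluing parameter (the decoration identifying the asymptotic markers at the breaking orbit), and the extra condition cutting the locus out is precisely that this decoration be the one determined by the positions of the remembered marked points on the two levels --- equivalently, that on each level the remembered point lie on $\IR\times\{0\}$ once the $S^1$-coordinate is normalized. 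This is the mechanism that forces $\del_{\check{\nu}}$ and $\del_{(\check{\alpha},i-1)}$, rather than $\del_{\nu}$ and $\del_{(\alpha,i-1)}$, into the formulas, and it is the reason the theorem lives in the non-equivariant theory at all; without it, your case analysis in $(1,1)$ and $(0,2)$ does not determine which version of the operator appears on a broken level, which is the very point you list as ``bookkeeping to be checked''.
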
 

\vspace{0.5cm}

\subsection{Proof of the Main Theorem}
In this section we estabilish the psi-class localization result needed to prove Theorem \ref{TRR-noneqCH}, but also Proposition \ref{TRR-Floer}. We will show how coherent sections of tautological bundles on the moduli space of SFT-curves can be chosen such that their zero locus localizes on nodal configurations and boundary strata (multi-level curves).\\

Such localization will be the analogue, in presence of coherence conditions, of the usual result in Gromov-Witten theory describing the divisor $\psi_{i,r}=c_1(\LL_{i,r})$ on $\CM_{0,r,A}(X)$ as the locus of nodal curves where the $i$-th marked point lies on a different component with respect to a pair of other reference marked points. We will divide the discussion in three parts, corresponding to the three kind of topological recursion relations $(2,0)$, $(1,1)$ and $(0,2)$.\\

For the moment we stay general and consider the full SFT moduli space of curves with any number of punctures and marked points in a general manifold with stable Hamiltonian structure. In particular we will describe a special class of coherent collections of sections $s_{i,r}$ for the tautological bundles $\LL_{i,r}$ on the moduli spaces $\CM_{0,r,A}(\Gamma^+,\Gamma^-)$. We will then consider a sequence inside such class converging to a (no longer coherent) collection of sections whose zeros will completely be contained in the boundary strata (both nodal and multi-floor curves) of $\CM_{0,r,A}(\Gamma^+,\Gamma^-)$.\\

In \cite{FR} we already explained how to choose a (non-generic) coherent collection of sections $s_{i,r}$ in such a way that, considering the projection $\pi_r:\CM_{g,r,A}(\Gamma^+,\Gamma^-)\to\CM_{g,r-1,A}(\Gamma^+,\Gamma^-)$ consisting in forgetting the $r$-th marked point, the following \emph{comparison formula} holds for their zero sets:
\begin{equation}\label{comparison}
s_{i,r}^{-1}(0) = \pi_{r}^{-1}(s_{i,r-1}^{-1}(0))+D^\mathrm{const}_{i,r},
\end{equation}
The sum in the right hand side means union with the submanifold $D^\mathrm{const}_{i,r}$ of nodal curves with a constant sphere bubble carrying the $i$-th and $r$-th marked points, transversally intersecting 
$\pi_{r}^{-1}(s_{i,r-1}^{-1}(0))$.\\

We wish to stress the fact that such choice is possible because any codimension $1$ boundary of the moduli space $\CM_{g,r,A}(\Gamma^+,\Gamma^-)$ decomposes into a product of moduli spaces where the factor containing the $i$-th marked point carries the same well defined projection map $\pi_{r}$. This is because codimension $1$ boundary strata are always formed by non-constant maps, which remain stable after forgetting a marked point.\\

In fact coherence also requires that our choice of coherent collection of sections is symmetric with respect to permutations of the marked points (other than the $i$-th, carrying the descendant). We can reiterate this procedure until we forget all the marked points but the $i$-th, getting easily
\begin{equation}\label{sdd}
s_{i,r}^{-1}(0)=(\pi_{1}^*\circ\ldots\circ\hat{\pi}_i^*\circ\ldots\circ\pi_{r}^* \, s_{i,1})^{-1}(0) + \sum_{\substack{I\sqcup J=\{1,\ldots,r\}\\ \{i\}\subsetneq I \subseteq \{1,\ldots,r\}}} D^\mathrm{const}_{(I|J)}
\end{equation}
where $D^\mathrm{const}_{(I|J)}$ is the submanifold of nodal curves with a constant sphere bubble carrying the marked points labeled by indices in $I$. Such choice of coherent sections is indeed symmetric with respect to permutation of the marked points.\\

However, forgetting all of the marked points is not what we want to do in general, so we may take another approach, that does not specify whether the points we are forgetting are marked points or punctures. Forgetting punctures only makes sense after forgetting the map too.\\

Indeed, consider the projection $\sigma:\CM_{g,r,A}(\Gamma^+,\Gamma^-)\to \CM_{g,r+|\Gamma^+|+|\Gamma^-|}$ to the Deligne-Mumford moduli space of stable curves consisting in forgetting the holomorphic map and asymptotic markers, consequently stabilizing the curve, and considering punctures just as marked points. For simplicity, denote $n=r+|\Gamma^+|+|\Gamma^-|$. The tautological bundle $\LL_{i,r}$ on $\CM_{g,r,A}(\Gamma^+,\Gamma^-)$ coincides, by definition, with the pull-back along $\sigma$ of the tautological bundle $\LL_{i,n}$ on $\CM_{g,n}$ away from the boundary stratum $D_i\subset \CM_{g,r,A}(\Gamma^+,\Gamma^-)$ of nodal curves with a (possibly non-constant) bubble carrying the $i$-th marked point alone and the boundary stratum $D'_i\subset \CM_{g,r,A}(\Gamma^+,\Gamma^-)$ of multi-level curves with a level consisting in a holomorphic disk bounded by a Reeb orbit and carrying the $i$-th marked point.\\

At this point we are going to make the following assumption, which will hold throughout the paper.\\

{\bf Assumption:} In $V\times \IR$ there is no holomorphic disk bounded by a Reeb orbit. This implies, in particular, $D'_i=\emptyset$.\\

We choose now a coherent collection of sections $\tilde{s}_{i,n}$ on the Deligne-Mumford moduli space of stable curves $\CM_{g,n}$. The definition of such coherent collection is the same as for the space of maps, but this time we impose coherence on each real-codimension $2$ divisor of nodal curves (as opposed to the case of maps, where we only imposed coherence at codimension $1$ boundary strata). Such a coherent collection pulls back to a coherent collection of sections on $\CM_{g,r,A}(\Gamma^+,\Gamma^-)$ away from the already considered boundary stratum $D_i$ (the only one still present after the above assumption). We then scale such sections to zero smoothly as they reach $D_i$ using coherent cut-off functions (as we did for the comparison formula in \cite{FR}) getting this way a coherent collection of sections on the full $\CM_{g,r,A}(\Gamma^+,\Gamma^-)$. Precisely as in Gromov-Witten theory (see e.g. \cite{G}) we can see easily that the zero we added at $D_i$ has degree $1$.\\

Once more, suche construction is possible because any codimension $1$ boundary of the moduli space $\CM_{g,r,A}(\Gamma^+,\Gamma^-)$ decomposes into a product of moduli spaces where the factor containing the $i$-th marked point carries the same well defined projection map $\sigma$. This is because codimension $1$ boundary strata are always formed by multi-level curves, each level carrying at least two punctures (by the above assumption) and the $i$-th marked point, hence remaining stable after forgetting the map.\\

We then get, on $\CM_{g,r,A}(\Gamma^+,\Gamma^-)$,
\begin{equation}\label{loc1}
s_{i,r}^{-1}(0)=(\sigma^* \, \tilde{s}_{i,n})^{-1}(0) + D_i
\end{equation}

This construction of a coherent collection of sections for $\CM_{g,r,A}(\Gamma^+,\Gamma^-)$ moves the problem of explicitly describing their zero locus to the more tractable space of curves $\CM_{g,n}$. Notice first of all that, since $\CM_{g,n}$ has no (codimension $1$) boundary, any generic choice (coherent or not) of sections for the tautological bundles will give rise to a zero loci with the same homology class. However, when we pull-back such section via $\sigma$, we want to remember more than just the homology class of its zero (as required by coherence), so we need to make some specific choice.\\

Let us now restrict ourselves to genus $0$. In Gromov-Witten theory, where there is no need for coherence conditions, we are used to select two marked points besides the one carrying the psi-class and succesively forget all the other ones until we drop on $\CM_{0,3}=\mathrm{pt}$, where the tautological bundle $\LL_{i,3}$ is trivial. This approach is not possible in our SFT context where we require coherence on $\CM_{g,n}$. Indeed, if we select two punctures labeled by $j$ and $k$, we automatically lose the required symmetry with respect to permutation of the marked points. To overcome this problem we need to use coherent collections of multisections (whose image is a branched manifold) in order to average over all the possible choices of a pair of punctures.\\

Let us choose an averaged (over all the possible ways of choosing $2$ marked points out of $n$) multi-section for $\LL_{i,n}$ on $\CM_{0,n}$ such that its zero locus has the (averaged) form
\begin{equation}\label{average}
s_{i,n}^{-1}(0)=\frac{(n-3)!}{(n-1)!}\sum_{\substack{2\leq k\leq n-2\\ I\sqcup J=\{1,\ldots,n\}\\i\in I,|J|=k}} \frac{k!}{(k-2)!} D_{(I|J)}
\end{equation}

It is easy to see that such a (holomorphic, non-generic) section can be perturbed to a generic smooth section which is also coherent (this is in fact a statement about all of the sections $s_{i,j}$ together, for $3\leq j\leq n$). The zero locus of such (multi)section will form a (branched) codimension $2$ locus in the tubular neighborhood of the unperturbed zero locus $s_{i,j}^{-1}(0)$, transversally intersecting such locus. For notational and visualization simplicity we will analyze the case of $\CM_{0,5}$ in the example below, leaving the general case as an exercise for the reader. Once such coherent collection of sections $\tilde{s}_{i,j}$ is constructed we consider a sequence of sections $\tilde{s}^{(k)}_{i,j}$ with $\tilde{s}^{0}_{i,j}=\tilde{s}_{i,j}$ and converging back to the old non-generic $s_{i,j}$ as $k\to\infty$. This limit construction determines a codimension $2$ locus in the moduli space $\CM_{0,r}(\Gamma^+,\Gamma^-)$ completely contained in the boundary strata formed my nodal and multilevel curves, corresponding to the first summand in the right hand side of equation (\ref{loc1}). The explicit form of the boundary components involved by such locus is described by formula (\ref{average}), where the divisor $D_{(I|J)}$ refers to the source nodal Riemann surfaces for the multilevel curves in the target $V\times \IR$.

\begin{example}\label{DMcoherence}
This example, and the understanding of the general phenomenon it describes, emerged in a discussion with Dimitri Zvonkine. Consider the moduli space $\CM_{0,5}$, whose boundary divisors, formed by nodal curves, we denote $D_{ij}$, $1\leq i < j \leq 5$, where $D_{ij}$ is the space of nodal curves with a bubble carrying the $i$-th and $j$-th points alone. The intersection structure of such divisors is represented by the following picture.
\begin{center}
\includegraphics[width=9cm]{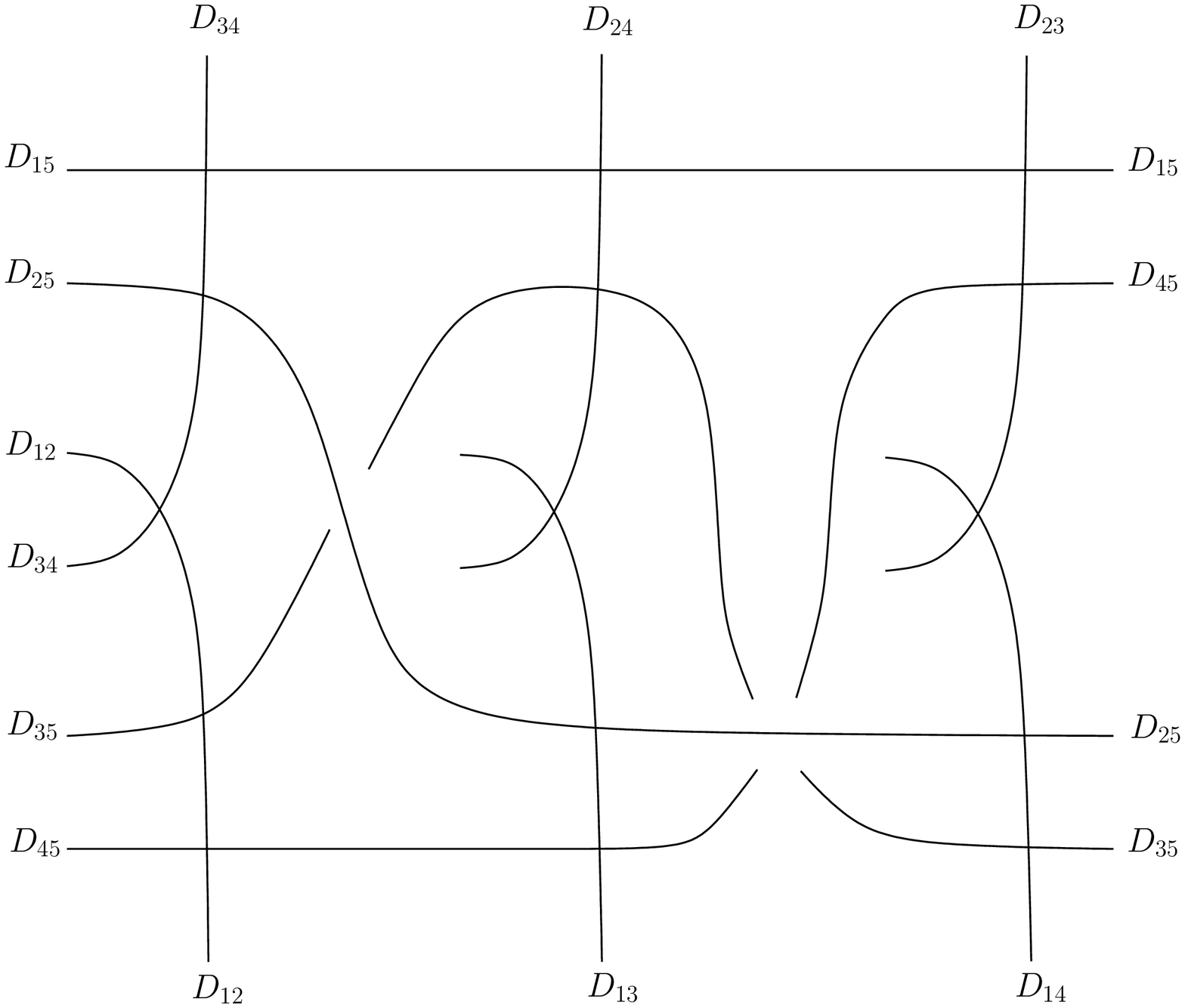}
\end{center}
When two different nodal divisors intersect, they do it with indersection index $+1$. The self-intersection index of any of them is, on the other hand, $-1$. Each of the $D_{ij}$ being a copy of $\mathbb{P}^1$ (representing a copy of the moduli space $\CM_{0,4}$ appearing at the boundary of $\CM_{0,5}$), this means that the normal bundle $N_{D_{ij}}$ of such $D_{ij}$ has Chern class $c_1(N_{D_{ij}})=-1$, hence the tubular neighborhood of $D_{ij}$ is a copy of $\tilde{\mathbb{C}}^2$, i.e. $\mathbb{C}^2$ blown up at $0$, $D_{ij}$ itself being the exceptional divisor. An intersecting $D_{kl}$ can then be seen as a line through the origin of $\tilde{\mathbb{C}}^2$.\\

Consider now the tautological line bundle $\LL_{1,5}$ on $\CM_{0,5}$. Using formula (\ref{average}), the corresponding averaged psi-class, i.e. the (dual to the) zero locus of a averaged multi-section of $\LL_{1,5}$, has the form
$$s_{1,5}^{-1}(0)=\frac{1}{2}(D_{12}+D_{13}+D_{14}+D_{15})+\frac{1}{6}(D_{23}+D_{24}+D_{25}+D_{34}+D_{35}+D_{45})$$
We now want to perturb such multi-section $s$ to a coherent multi-section $\tilde{s}$ by a small perturbation int he neighborhood of the nodal divisors. In fact it will be sufficient to describe how the zero locus is perturbed.\\

First notice that, once the line bundle $\LL_{1,5}$ is chosen, the nodal divisors $D_{ij}$ are split into two different sets, namely the ones for which $i=1$ and the ones with $i\neq 1$ (appearing in the first and second summand in the above averaged formula). The perturbation will be symmetric with respect to permutations inside these two subsets separately. Looking at the picture above for visual help, let us start by perturbing $\frac{1}{6}D_{34}$ away from itself in such a way that it still intersects $D_{34}$ at $D_{34}\cap D_{15}$ with index $-\frac{1}{6}$, at $D_{34}\cap D_{25}$ with index $+\frac{1}{6}$ and at $D_{34}\cap D_{12}$ with index $-\frac{1}{6}$ (notice that the total self-intersection index is $-\frac{1}{6}$, as it should for $\frac{1}{6}D_{34}$). The analogous choice is to be made for each of the divisors $D_{ij}$ with $i \neq 1$. Then we perturb $\frac{1}{2}D_{15}$ away from itself in such a way that it still intersects $D_{15}$ at $D_{15}\cap D_{34}$, $D_{15}\cap D_{24}$ and $D_{15}\cap D_{23}$ always with intersection index $-\frac{1}{6}$ (summing to a total self-intesection index of $-\frac{1}{2}$, as it should be for $\frac{1}{2}D_{15}$). This is in fact a multisection of the normal bundle $N_{D_{15}}$ formed by superimposing three sections of weight $\frac{1}{6}$ each, having a zero (of index $-\frac{1}{6}$) at  $D_{15}\cap D_{34}$, $D_{15}\cap D_{24}$ and $D_{15}\cap D_{23}$ respectively. Notice that such perturbation of $\frac{1}{2}D_{15}$ still intersects $D_{34}$ in a punctured neighborhood of $D_{34}\cap D_{15}$ with total intersection index $\frac{2}{6}$ and once more precisely at $D_{34}\cap D_{15}$ with intersection index $\frac{1}{6}$. The analogous choice is to be made for all of the divisors $D_{1j}$. See the lef side of next picture for some intuition.

\begin{center}
\includegraphics[width=10cm]{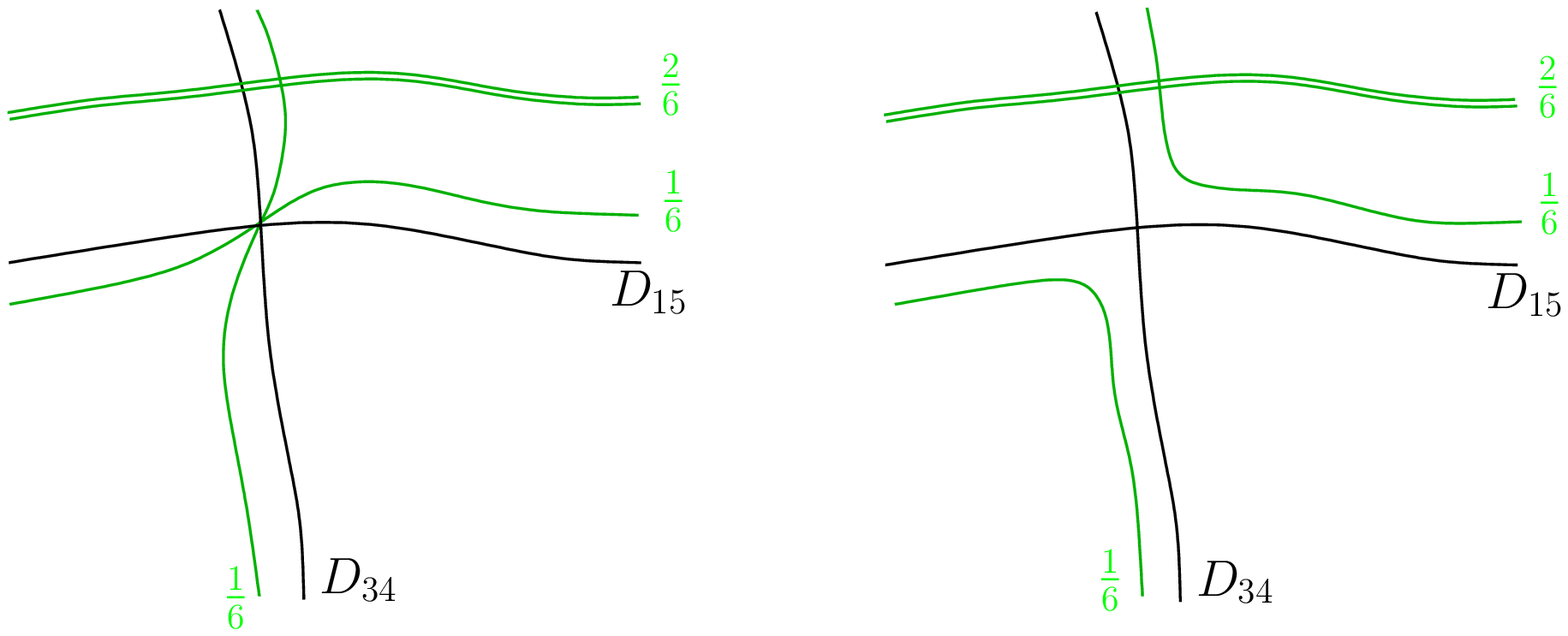}
\end{center}

At the point $D_{34}\cap D_{15}$ we will combine the two perturbed divisors to annihilate their intersection there (their intersection indices being $-\frac{1}{6}$ and $+\frac{1}{6}$). This gives rise to a hyperbolic (and smooth, as we want to get generic sections) behaviour of the zero locus that will now avoid $D_{15}$ completely (right side of the above picture). In a tubular neighborhood of $D_{34}$ the situation is described by the following picture, representing such neighborhood as $\tilde{\mathbb{C}}^2$.

\begin{center}
\includegraphics[width=10cm]{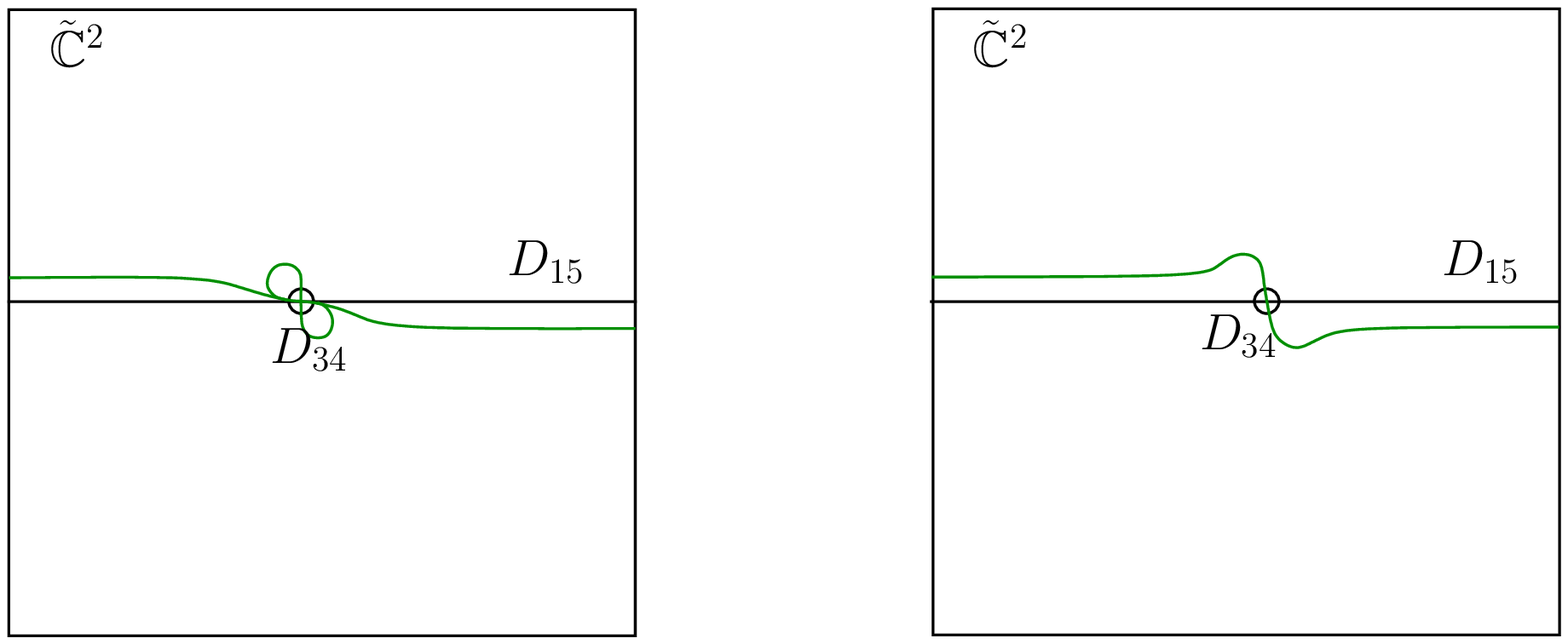}
\end{center}

The circle at the origin is the exceptional divisor $D_{34}$ and points on it are identified along diameters. Notice that, in the second picture, we see the hyperbolic behaviour and the fact that the green zero locus does avoid $D_{15}$.\\

We are now ready to prove that such averaged perturbed section is coherent with the corresponfing section on $\CM_{0,4}$, whose zero locus, using once more formula (\ref{average}), will be
$$s_{1,4}^{-1}(0)=\frac{1}{3}(D_{12}+D_{13}+D_{14}).$$
Indeed, the perturbed zero locus does not intersect at all $D_{1k}$, coherently with the fact that $\LL_{1,5}$ pulls back to the trivial bundle at such divisors, while at each of the $D_{ij}$ with $i\neq 1$ the situation is the same as for $D_{34}$: two of the three branches of the multisection of $N_{D_{15}}$, each with weight $\frac{1}{6}$, intersect it close to $D_{34}\cap D_{15}$ (total index $\frac{2}{6}$), and similarly for $N_{D_{12}}$, while, close to $D_{34}\cap D_{25}$, both the perturbation of $\frac{1}{6}D_{34}$ and the perturbation to $\frac{1}{6}D_{25}$ intersect $D_{34}$ (total index $\frac{2}{6}$). This is coherent with the above averaged formula for $s_{1,4}^{-1}(0)$.\\

With the very same approach (only the combinatorics being more complicated) we can trat the general case of $\CM_{0,n}$, constructing the analogous perturbation to the averaged formula (\ref{average}) for the psi-class on the Deligne-Mumford space of curves.
\end{example}

In order to prove Theorem \ref{TRR-noneqCH} we will make three different choices of special coherent collections of sections. Indeed, for equation $(2,0)$, the idea is not remembering any marked point, but only averaging with respect to the possible choices of two punctures. In this case we can choose an averaged coherent collection of multisections on the Deligne-Mumford moduli space of curves with $|\Gamma^+|+|\Gamma^-|+1$ marked points (using the perturbation technique of example \ref{DMcoherence}, where we are keeping all the puctures and the $i$-th marked point, carrying the psi-class), and then use equation (\ref{sdd}) to go to the space of maps $\CM_{0,r,A}(\Gamma^+,\Gamma^-)$. This coherent collection is evidently symmetric, with respect to permutations of marked points and punctures separately. Its zero locus, in the moduli space $\CM_{0,r,A}(\Gamma^+,\Gamma^-)$, has the form
$$\frac{P(P-1)}{2} s_{i,r,\Gamma^+,\Gamma^-}^{-1}(0) = \sum_{\substack{i\in I,\ I\sqcup J=\{1,\ldots,r\}\\ \Gamma^+_1 \sqcap \Gamma^+_2 = \Gamma^+ \\ \Gamma^-_1 \sqcap \Gamma^-_2 = \Gamma^- \\ P_2=|\Gamma^+_2|+|\Gamma^-_2|}} \frac{P_2(P_2-1)}{2}\  D_{(I,\Gamma^+_1,\Gamma^-_1|J,\Gamma^+_2,\Gamma^-_2)}$$
where $P=|\Gamma^+|+|\Gamma^-|$ and $D_{(I,\Gamma^+_1,\Gamma^-_1|J,\Gamma^+_2,\Gamma^-_2)}$ refers to a codimension $2$ locus in the tubular neighborhood of two-components curves joint at a node or puncture (hence nodal or two-level) where marked points and puctures split on each component as indicated by the subscript. The combinatorial factor on the left-hand side accounts for the possible ways of choosing two punctures to be remembered out of $P$, while the one on the right-hand side accounts for the number of ways a term of the form $D_{(I,\Gamma^+_1,\Gamma^-_1|J,\Gamma^+_2,\Gamma^-_2)}$ appears in the described averaging construction.\\

As a second possible choice we will start from an averaged coherent collection of multisections on the Deligne-Mumford moduli space of curves with $|\Gamma^+|+|\Gamma^-|+2$ marked points (like in example \ref{DMcoherence} where, again, we keep all the punctures, the $i$-th marked point carrying the psi-class, but also another marked point). There are exactly $r-1$ different forgetful projections from the space $\CM_{0,r,A}(\Gamma^+,\Gamma^-)$ to such $\CM_{0,|\Gamma^+|+|\Gamma^-|+2}$, corresponding to the numbering of the extra remembered marked point (excluding the $i$-th, which is also always remebered). In order to obtain a coherent collection on $\CM_{0,r,A}(\Gamma^+,\Gamma^-)$ which is also symmetric with respect to permutations of the marked points we need to use the pull-back construction of equation (\ref{sdd}), but also average among these different forgetful projections. After some easy combinatorics, its zero locus has the form
\begin{equation*}
\begin{split}
&(r-1)\frac{P(P+1)}{2} s_{i,r,\Gamma^+,\Gamma^-}^{-1}(0) =\\
&\sum_{\substack{i\in I,\ I\sqcup J=\{1,\ldots,r\}\\ |I|=r_1,\ |J|=r_2\\ \Gamma^+_1 \sqcap \Gamma^+_2 = \Gamma^+ \\ \Gamma^-_1 \sqcap \Gamma^-_2 = \Gamma^- \\ P_2=|\Gamma^+_2|+|\Gamma^-_2|}} \left[r_2 \frac{P_2(P_2+1)}{2}+(r_1-1)\frac{P_2(P_2-1)}{2}\right]\  D_{(I,\Gamma^+_1,\Gamma^-_1|J,\Gamma^+_2,\Gamma^-_2)}
\end{split}
\end{equation*}

Finally, as a last choice, we start from the moduli space of curves with $|\Gamma^+|+|\Gamma^-|+3$ marked points, so that we are keeping, after forgetting the map, two extra marked points (besides the $i$-th). This time there will be exactly $\frac{(r-1)(r-2)}{2}$ forgetful maps from $\CM_{0,r,A}(\Gamma^+,\Gamma^-)$ to $\CM_{0,|\Gamma^+|+|\Gamma^-|+3}$, corresponding to the numbering of the two extra remembered marked points. The pull-back construction of equation (\ref{sdd}) needs then to be averaged among these possible choices in order to be symmetric with respect to permutations of marked points. This time the averaging combinatorics gives

\begin{equation*}
\begin{split}
&\frac{(r-1)(r-2)}{2}\frac{(P+2)(P+1)}{2} s_{i,r,\Gamma^+,\Gamma^-}^{-1}(0) =\\
&\sum_{\substack{i\in I,\ I\sqcup J=\{1,\ldots,r\}\\ |I|=r_1,\ |J|=r_2\\ \Gamma^+_1 \sqcap \Gamma^+_2 = \Gamma^+ \\ \Gamma^-_1 \sqcap \Gamma^-_2 = \Gamma^- \\ P_2=|\Gamma^+_2|+|\Gamma^-_2|}} \left[\frac{r_2(r_2-1)}{2} \frac{(P_2+2)(P_2+1)}{2}+ r_2(r_1-1)\frac{P_2(P_2+1)}{2}\right.\\
&\phantom{\sum_{\substack{i\in I,\ I\sqcup J=\{1,\ldots,r\}\\ |I|=r_1,\ |J|=r_2\\ \Gamma^+_1 \sqcap \Gamma^+_2 = \Gamma^+ \\ \Gamma^-_1 \sqcap \Gamma^-_2 = \Gamma^- \\ P_2=|\Gamma^+_2|+|\Gamma^-_2|}}}\left.+\frac{(r_1-1)(r_1-2)}{2}\frac{P_2(P_2-1)}{2}\right]\ D_{(I,\Gamma^+_1,\Gamma^-_1|J,\Gamma^+_2,\Gamma^-_2)}
\end{split}
\end{equation*}

Such three different choices of multisections can also be superimposed to form further multisections which are, of course, still coherent. This corresponds to some linear combination of the equations for their zero loci.  In particular, by taking respectively the first one, the second one minus the first one, and the third one minus twice the second one plus the first one, we get multisections whose wero loci have the form
$$\frac{P(P-1)}{2}\  s_{i,r,\Gamma^+,\Gamma^-}^{-1}(0) = \sum \frac{P_2(P_2-1)}{2}\  D_{(I,\Gamma^+_1,\Gamma^-_1|J,\Gamma^+_2,\Gamma^-_2)}$$
$$(r-1)P\ s_{i,r,\Gamma^+,\Gamma^-}^{-1}(0) = \sum r_2P_2\  D_{(I,\Gamma^+_1,\Gamma^-_1|J,\Gamma^+_2,\Gamma^-_2)}$$
$$\frac{r-1(r-2)}{2}\ s_{i,r,\Gamma^+,\Gamma^-}^{-1}(0) = \sum \frac{r_2(r_2-1)}{2}\  D_{(I,\Gamma^+_1,\Gamma^-_1|J,\Gamma^+_2,\Gamma^-_2)}$$
\\

To complete the proof of Theorem \ref{TRR-noneqCH} we just need to notice that the limit procedure taking the perturbed section $\tilde{s}_{i,n}$ of $\LL_{i,n}$ back to its original non-generic limit $s_{i,n}$ indeed amounts to selecting, in the space of maps relevant for cylindrical non-equivariant contact homology, via formula (\ref{loc1}), either nodal configurations (and this is obvious), or two-level ones, i.e. where the two smooth components are connected by a puncture instead of a node. While the two-level curves are of codimension one and not two in the space of maps (indeed this extra dimension remembers the information on the angular coordinate used for the gluing at the connecting puncture), we correct this error by fixing the decoration, i.e. the identification of the tangent planes at both points, \emph{a priori} as follows.\\

Since each of the two cylinders connected by the puncture carries one (or two in the case of (0,2)) of the remembered additional marked points, the position of these additional marked points can be used to fix unique $S^1$-coordinates and hence asymptotic markers on each of the two cylinders, which in turn defines a natural decoration by simply requiring that the two asymptotic markers are identified. Note that this turns the additional markers used for fixing the $S^1$-coordinates automatically into additional marked points constrained to $\IR\times \{0\}$. The following picture illustrate such phenomenon for the case relevant to equation $(1,1)$: the red puncture and marked point are those we are remembering, the black marked point is the one carrying the psi-class, whose power is specified by the index, the dashed line represents $\IR\times \{0\}$ and the green arrow indicates the matching condition between the two dashed lines (notice that, for simplicity, we are not explicitly drawing any other marked point). The $(0,2)$ case is completely similar, only involving averaging between the two possible choices of marked point to be constrained to $\IR\times \{0\}$.\\

\begin{center}
\includegraphics[width=10cm]{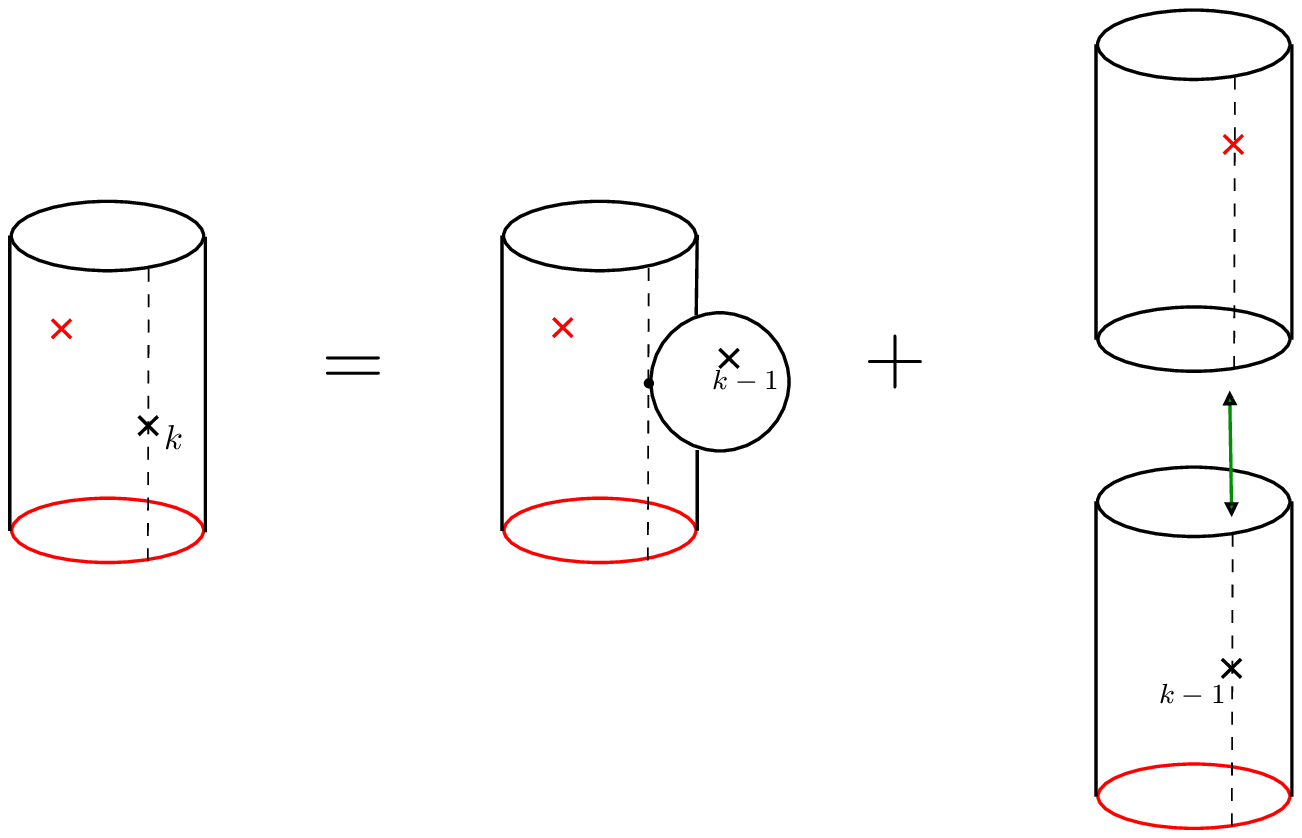}
\end{center}

We are only left with translating all this geometric picture and the three above equations for the zero loci back into our generating funtions language for the potential (recall the definition of the non equivariant cylindrical homology differential). There we see that the averaging combinatorial coefficients in formula (\ref{average}) are absorbed in the right way to give rise to the statement of Theorem \ref{TRR-noneqCH}.

\vspace{0.5cm}

\section{Applications}

In this final section we want to apply the topological recursion result for non-equivariant cylindrical homology to (equivariant) cylindrical homology. As an important result we show that, as in rational Gromov-Witten theory, all descendant invariants can be computed from primary invariants, i.e. those without descendants. Furthermore we will prove that the topological recursion relations imply that one can define an action of the quantum cohomology ring $QH^*(V)$ of the target manifold (defined using the Gromov-Witten potential $\If$ of $V$ introduced above) on the non-equivariant cylindrical homology $HC^{\textrm{non-}S^1}_*(V)$ by counting holomorphic cylinders with one constrained marked point. 

\vspace{0.5cm}

\subsection{Topological recursion in cylindrical homology}
Since the chain space for non-equivariant cylindrical homology splits, $C^{\textrm{non-}S^1}_*=\hat{C}_*\oplus\check{C}_*$, it follows that the linear maps on the chain space, obtained by differentiating the differential of non-equivariant cylindrical homology with respect to $t^{\alpha,p}$- or $\check{t}^{\alpha,p}$-variables, can be restricted to linear maps between $\hat{C}_*$ and $\check{C}_*$, respectively. On the other hand, since each of the spaces $\hat{C}_*$ and $\check{C}_*$ is just a copy of the chain space for (equivariant) cylindrical homology, with degree shifted by one for the second space, $\hat{C}_* = C_*$, $\check{C}_*=C_*[1]=C_{*+1}$, we can translate the linear maps from non-equivariant cylindrical homology to (equivariant) cylindrical homology as follows. \\
  
While the restricted linear maps $\del_{(\alpha,p)}: \hat{C}_*\to\hat{C}_*$ and $\del_{(\alpha,p)}: \check{C}_*\to\check{C}_*$ indeed agree with the linear maps $\del_{(\alpha,p)}: C_*\to C_*$ from cylindrical homology as defined in subsection 2.6, note that one can now introduce new linear maps $\del_{(\check{\alpha},p)}: C_*\to C_*$ on cylindrical homology by requiring that they agree with the linear maps $\del_{(\check{\alpha},p)}: \hat{C}_*\to \hat{C}_*$ (and hence $\del_{(\check{\alpha},p)}: \check{C}_*\to \check{C}_*$) from non-equivariant cylindrical homology.  \\

On the other hand, while the topological recursion relations we proved for the non-equivariant case are useful to compute the linear maps $\del_{(\check{\alpha},p)}$ on $HC^{\textrm{non-}S^1}_*$, the goal of topological recursion in cylindrical contact homology (as in rational SFT) is to compute the linear maps $\del_{(\alpha,p)}: HC_*\to HC_*$. In order to apply our results of the non-equivariant case to the equivariant case, we make use of the fact that (apart from the mentioned equivalence with $\del_{(\alpha,p)}: \hat{C}_*\to\hat{C}_*$ and $\del_{(\alpha,p)}: \check{C}_*\to\check{C}_*$) the linear map $\del_{(\alpha,p)}: C_*\to C_*$ also agrees with the restricted linear map $\del_{(\check{\alpha},p)}: \hat{C}_*\to\check{C}_*$. \\

In order to see this, observe that, while in the case of $\del_{(\alpha,p)}: \hat{C}_*\to\hat{C}_*$ (or $\del_{(\alpha,p)}: \check{C}_*\to\check{C}_*$) the free $S^1$-coordinate on the cylinder is fixed by the critical point on the negative (or positive) closed Reeb orbit, in the case of $\del_{\check{\alpha},p}: \hat{C}_*\to\check{C}_*$ the free $S^1$-coordinate on the cylinder is fixed by the additional marked point (and thereby turning it into a constrained marked point).  \\

With this we can prove the following corollary about topological recursion in (equivariant) cylindrical homology. 

\begin{corollary}\label{TRR-eqCH} For three \emph{different} non-generic special choices of coherent sections the following three \emph{topological recursion relations} hold in (equivariant) cylindrical contact homology
\begin{itemize}
\item[(2,0):] $$ \del_{(\alpha,i)} = \frac{\del^2 \If}{\del t^{\alpha,i-1}\del t^{\mu}} \eta^{\mu\nu} \del_{\nu}$$
\item[(1,1):] $$ N\,\del_{(\alpha,i)} = \frac{\del^2 \If}{\del t^{\alpha,i-1}\del t^{\mu}} \eta^{\mu\nu} N\, \del_{\nu} + \frac{1}{2}[\del_{(\alpha,i-1)}, \check{N}\,\del]_+ + \frac{1}{2}[\del_{(\check{\alpha},i-1)}, N\,\del]_+$$
\item[(0,2):] \begin{eqnarray*} N(N-1)\,\del_{(\alpha,i)} = \frac{\del^2 \If}{\del t^{\alpha,i-1}\del t^{\mu}} \eta^{\mu\nu} N(N-1)\, \del_{\nu} &+&[\del_{(\alpha,i-1)},\check{N}(N-1)\,\del]_+\\ &+&[\del_{(\check{\alpha},i-1)},N(N-1)\,\del]_+
\end{eqnarray*}
\end{itemize}
\end{corollary}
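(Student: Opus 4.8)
The plan is to deduce the equivariant relations from the non-equivariant ones of Theorem~\ref{TRR-noneqCH} by extracting, for each of the three identities, the component living in the off-diagonal block $\hat C_*\to\check C_*$ of the splitting $C^{\textrm{non-}S^1}_*=\hat C_*\oplus\check C_*$. The three observations recorded just before the corollary furnish the dictionary we need: under the identifications $\hat C_*=C_*$ and $\check C_*=C_*[1]$, the two diagonal blocks of $\del^{\textrm{non-}S^1}_{(\alpha,p)}$ are the equivariant $\del_{(\alpha,p)}$, the two diagonal blocks of $\del^{\textrm{non-}S^1}_{(\check\alpha,p)}$ are the new maps $\del_{(\check\alpha,p)}$, and the strictly-lower block $\hat C_*\to\check C_*$ of $\del^{\textrm{non-}S^1}_{(\check\alpha,p)}$ is again the equivariant $\del_{(\alpha,p)}$ (there the free $S^1$-coordinate on the cylinder is pinned by the constrained marked point rather than by a critical point on one of the two asymptotic orbits). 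Summing these over $p$ against $t^{\beta,j}$ gives the analogous dictionary for $\check N\,\del^{\textrm{non-}S^1}=\sum_{\beta,j}t^{\beta,j}\,\del^{\textrm{non-}S^1}_{(\check\beta,j)}$: its diagonal blocks are the equivariant $\check N\,\del$, while its strictly-lower block is the equivariant $N\,\del$; the same holds with an extra scalar factor $(N-1)$ inserted, since $N$ and $\check N$ act only on the $t$- and $\check t$-coefficients and hence commute with the block decomposition, as does the coefficient $\tfrac{\del^2\If}{\del t^{\alpha,i-1}\del t^\mu}\eta^{\mu\nu}$.

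The first step is to record the block-triangularity of the whole package: with respect to $\hat C_*\oplus\check C_*$ the non-equivariant differential together with all of the operators $\del^{\textrm{non-}S^1}_{(\alpha,p)}$ and $\del^{\textrm{non-}S^1}_{(\check\alpha,p)}$ has vanishing $\check C_*\to\hat C_*$ block, because the self-indexing Morse function $f_\gamma$ on each orbit circle has its negative gradient flow running from the maximum $\hat\gamma$ to the minimum $\check\gamma$, so the Morse--Bott cascades can only lower the Morse index. Next I would take each identity of Theorem~\ref{TRR-noneqCH}, regard it as an equality of operators on $\hat C_*\oplus\check C_*$, and project onto the $\hat C_*\to\check C_*$ block. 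For $(2,0)$ there is no commutator term and the projection is immediate: the left side becomes $\del_{(\alpha,i)}$ and the right side $\tfrac{\del^2\If}{\del t^{\alpha,i-1}\del t^\mu}\eta^{\mu\nu}\del_\nu$, which is exactly $(2,0)$ of Corollary~\ref{TRR-eqCH}. For $(1,1)$ and $(0,2)$ the only point requiring care is the anti-commutator $\tfrac12[\del^{\textrm{non-}S^1}_{(\check\alpha,i-1)},\check N\,\del^{\textrm{non-}S^1}]_+$ (respectively with $\check N(N-1)$ in place of $\check N$): writing $A=\del^{\textrm{non-}S^1}_{(\check\alpha,i-1)}$ and $B=\check N\,\del^{\textrm{non-}S^1}$, each block-lower-triangular with diagonal blocks $A_0=\del_{(\check\alpha,i-1)}$, $B_0=\check N\,\del$ and strictly-lower blocks $A_1=\del_{(\alpha,i-1)}$, $B_1=N\,\del$, triangularity forces the $\hat C_*\to\check C_*$ block of $[A,B]_+$ to equal $[A_1,B_0]_+ + [A_0,B_1]_+=[\del_{(\alpha,i-1)},\check N\,\del]_+ + [\del_{(\check\alpha,i-1)},N\,\del]_+$. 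Re-assembling the three projected summands then reproduces verbatim the $(1,1)$ and $(0,2)$ formulas of the corollary, with the factors $\tfrac12$ and the weights $N$ and $N-1$ carried along unchanged.

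Finally I would check that all the operators involved descend to (equivariant) cylindrical homology --- the block-extracted maps commute with the equivariant differential up to chain homotopy by the same argument used in the non-equivariant case --- and that block extraction is compatible with passing to homology, since the block-triangular non-equivariant differential makes one of $\hat C_*$, $\check C_*$ a subcomplex and the other the corresponding quotient complex; hence the projected identities descend to $HC_*$ as claimed. The main obstacle I expect is the bookkeeping of Koszul signs in the graded anti-commutators under the degree shift $\check C_*=C_*[1]$, together with rigorously justifying the triangularity and the identification $B_1=N\,\del$ --- equivalently, the geometric statement that passing from a diagonal block to the strictly-lower block amounts precisely to re-designating a constrained marked point as an unconstrained one; the remaining content is a routine, if notation-heavy, block computation.
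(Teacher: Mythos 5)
Your proposal is correct and follows essentially the same route as the paper: extract the $\hat{C}_*\to\check{C}_*$ block of the non-equivariant relations and use the dictionary identifying the diagonal blocks of $\del_{(\check\alpha,p)}$ with the new equivariant maps $\del_{(\check\alpha,p)}$, its off-diagonal block with $\del_{(\alpha,p)}$, and the off-diagonal block of $\check N\,\del$ with $N\,\del$; your block-decomposition of the anti-commutator is exactly the two-term expansion the paper writes out for the composition $\del_{(\check\alpha,i-1)}\circ\check N\,\del$ restricted to $\hat{C}_*\to\check{C}_*$. The extra remarks on triangularity and descent to homology are consistent with, though not spelled out in, the paper's argument.
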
 

\begin{proof}
While the relation (2,0) is immediately follows by identifying the linear map $\del_{(\alpha,p)}: C_*\to C_*$ with the restricted linear map $\del_{(\check{\alpha},p)}: \hat{C}_*\to\check{C}_*$, for the relations (1,1) and (0,2) it suffices to observe that 
\begin{eqnarray*} 
  (\del_{(\check{\alpha},i-1)}\circ\check{N}\del:\hat{C}_*\to\check{C}_*)  
 &=& (\del_{(\check{\alpha},i-1)}: \hat{C}_*\to\check{C}_*) \circ (\check{N}\del: \hat{C}_*\to\hat{C}_*) \\
 &+& (\del_{(\check{\alpha},i-1)}: \check{C}_*\to\check{C}_*) \circ (\check{N}\del: \hat{C}_*\to\check{C}_*) \\
 &=& (\del_{(\alpha,i-1)}: C_*\to C_*) \circ (\check{N}\del: C_*\to C_*) \\
 &+& (\del_{(\check{\alpha},i-1)}: C_*\to C_*) \circ (N\del: C_*\to C_*).
\end{eqnarray*}
\end{proof}

While it follows that the second and the third topological recursion relation involve the linear maps $\del_{\check{\alpha},p}: C_*\to C_*$ defined using non-equivariant contact homology and hence leave the frame of standard (equivariant) cylindrical homology, it is notable that the first topological recursion relation (2,0) indeed has the following important consequence. 

\begin{corollary} All linear maps $\del_{(\alpha,p)}: HC_*(V)\to HC_*(V)$ on cylindrical homology involving gravitational descendants can be computed from the linear maps $\del_{\alpha}: HC_*(V)\to HC_*(V)$  with no gravitational descendants and the primary rational Gromov-Witten potential of the underlying stable Hamiltonian manifold, i.e. again involving no gravitational descendants. \end{corollary}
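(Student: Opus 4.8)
The plan is to deduce the claim entirely from the topological recursion relation $(2,0)$ of Corollary~\ref{TRR-eqCH}, which (writing $t^{\mu}=t^{\mu,0}$ for the primary variables) reads
\[ \del_{(\alpha,p)} \;=\; \frac{\del^2 \If}{\del t^{\alpha,p-1}\del t^{\mu,0}}\,\eta^{\mu\nu}\,\del_{\nu}. \]
This already presents the descendant map $\del_{(\alpha,p)}$ on $HC_*(V)$ as the sum over $\nu$ of the \emph{descendant-free} map $\del_{\nu}=\del_{(\nu,0)}$ precomposed, on the coefficient ring of $C_*$, with multiplication by the formal power series $\frac{\del^2\If}{\del t^{\alpha,p-1}\del t^{\mu,0}}\,\eta^{\mu\nu}$; in particular no induction over the descendant index $p$ is needed on the operator side, a single application of $(2,0)$ doing the job. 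The only thing I would check carefully at this stage is that multiplication by that power series is well defined on $HC_*(V)$, i.e.\ descends to homology -- but this is already part of the assertion of Corollary~\ref{TRR-eqCH}, since $(2,0)$ is stated there on homology.

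It remains to argue that the coefficient functions on the right-hand side, namely the two-point descendant correlators $\frac{\del^2\If}{\del t^{\alpha,p-1}\del t^{\mu,0}}$ of the Gromov-Witten potential $\If$, can themselves be recovered from the \emph{primary} potential $\If|_{t^{\alpha,j}=0,\ j\ge 1}$. Here I would invoke the classical reconstruction of the genus-zero descendant Gromov-Witten potential from its primary part. By definition $\If=\Ih|_{p=q=0}$ is the genus-zero Gromov-Witten potential of a point in the contact case (so trivial) and of the symplectic fibre $M$ in the mapping-torus case, hence a bona fide rational descendant potential of a closed symplectic manifold; so the string and dilaton equations of \cite{FR} reduce for $\If$ to the classical ones, and together with the genus-zero topological recursion relations
\[ \frac{\del^3 \If}{\del t^{\alpha,i}\del t^{\beta,j}\del t^{\gamma,k}} \;=\; \frac{\del^2 \If}{\del t^{\alpha,i-1}\del t^{\mu,0}}\,\eta^{\mu\nu}\,\frac{\del^3 \If}{\del t^{\nu,0}\del t^{\beta,j}\del t^{\gamma,k}} \]
recorded in Section~3 they yield the full descendant potential. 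Concretely one runs an induction on total descendant degree: the topological recursion relations lower a descendant index on any correlator with at least three insertions, the string equation rewrites every $t^{0,0}$-derivative in terms of correlators of strictly smaller total descendant degree (disposing of the remaining two-point functions), and the dilaton equation removes the $t^{0,1}$-insertion, the base of the induction being the primary potential; this is Dubrovin's reconstruction of the descendant potential from the underlying Frobenius structure, for which I would cite \cite{DZ}.

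Putting the two steps together -- substituting the so-reconstructed series back into $(2,0)$ -- exhibits each $\del_{(\alpha,p)}: HC_*(V)\to HC_*(V)$ as a universal expression built solely from the descendant-free operators $\del_{\nu}$ and from the primary rational Gromov-Witten potential of the underlying stable Hamiltonian manifold, which is exactly the assertion. I expect the genuine difficulty to lie not in this corollary but in its input, the homology-level $(2,0)$ relation established via the $\psi$-class localization of the previous section; granting that, the elimination of descendants here is purely formal and parallels the rational Gromov-Witten case verbatim. The one remaining point deserving care is to confirm that the ambient SFT-defined potential $\If$ really does satisfy the classical string, dilaton and topological recursion relations (respectively, is vacuous for a point), so that the reconstruction theorem of \cite{DZ} applies as stated.
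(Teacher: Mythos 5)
Your proposal is correct and follows essentially the same route as the paper: a single application of the homology-level relation $(2,0)$ puts the descendant on the attached sphere, reducing $\del_{(\alpha,p)}$ to the descendant-free maps $\del_{\nu}$ together with descendant Gromov--Witten correlators of $\If$, which are then reconstructed from the primary potential by the classical genus-zero argument (topological recursion, string, dilaton, divisor). The only difference is cosmetic: you spell out the reconstruction induction and the well-definedness of $\If$ in more detail, where the paper simply cites the standard Gromov--Witten result generalized to stable Hamiltonian manifolds without holomorphic planes.
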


\begin{proof}
For the proof it suffices to observe that after applying the topological recursion relation (2,0) the marked point with the descendant sits on the attached sphere, so that the linear maps with descendants can indeed be computed from the linear maps without descendants and the rational Gromov-Witten potential of the target manifold with gravitational descendants. Together with the  standard result of rational Gromov-Witten theory (generalized in the obvious way from symplectic manifolds to stable Hamiltonian manifolds without holomorphic planes) that the full descendant potential can be computed from the primary potential involving no descendants using the above mentioned topological recursion relations together with the divisor (to add more marked points on non-constant spheres), string and dilaton (for the case of constant spheres) equations, it follows the remarkable result that also in cylindrical homology the descendant invariants are determined by the primary invariants, that is, if we additionally include the primary Gromov-Witten potential.
\end{proof}

\begin{remark} Note that the first topological relation actually descends to homology, i.e. it holds for $\del_{(\alpha,i)}$ and $\del_{\nu}$ \emph{viewed as linear maps on cylindrical homology} $HC_*(V)$. In particular, while on the chain level all topological recursion relations only hold true for (three different) \emph{special} choices of coherent sections, \emph{after} passing to homology the first relation (2,0) holds for \emph{all} coherent sections. \end{remark}

As we already remarked, it follows from the maximum principle that the Gromov-Witten potential of a contact manifold simply agrees with the Gromov-Witten potential of a point. Since in this case it follows from dimensional reasons that after setting all $t$-variables to zero we have $$\frac{\del^2 \If}{\del t^{\alpha,i-1}\del t^{\mu}}|_{t=0} = 0,\; i>0,$$ we have the following important vanishing result for contact manifolds. \\

For the rest of this subsection as well as the next one we will restrict ourselves to the case where all formal $t$-variables are set to zero. \\

Following the notation in \cite{F2} and \cite{FR} let us denote by $HC^0_*(V)=H(C^0_*,\del^0)$ the cylindrical homology without additional marked points and hence without $t$-variables, which is obtained from the big cylindrical homology complex $HC_*(V)=H(C_*,\del)$ by setting all $t$-variables to zero. In the same way let us introduce the corresponding linear map $\del^1_{(\alpha,p)}: HC^0_*(V)\to HC^0_*(V)$ obtained again by setting $t=0$ and which now counts holomorphic cylinders with just one additional marked point (and descendants).   

\begin{corollary} In the case when $V$ is a contact manifold, after setting all $t$-variables to zero, the corresponding descendant linear maps $\del^1_{(\alpha,p)}: HC^0_*(V)\to HC^0_*(V)$, $p>0$ are zero. \end{corollary}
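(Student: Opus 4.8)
The plan is to read the vanishing directly off the topological recursion relation $(2,0)$ of Corollary \ref{TRR-eqCH}, specialized to the locus where all formal variables $t^{\alpha,j}$ are set to zero. Recall that, for the relevant special coherent collection of sections, that corollary gives, as endomorphisms of the big cylindrical complex $C_*$,
\[
 \del_{(\alpha,p)} \;=\; \frac{\del^2\If}{\del t^{\alpha,p-1}\del t^{\mu}}\ \eta^{\mu\nu}\ \del_{\nu},\qquad p>0,
\]
where $\del_{\nu}=\del_{(\nu,0)}$ and the coefficient is a formal power series in the $t$- and $z$-variables; moreover, by the Remark following that corollary, this identity already descends to, and holds on, cylindrical homology for every choice of coherent sections.

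First I would record how the one-marked-point operators are the $t=0$ truncations of the big ones. Setting all $t$-variables to zero is a chain map $\mathrm{ev}_{t=0}\colon(C_*,\del)\to(C^0_*,\del^0)$, since $\del=\del^0+(\text{terms of positive $t$-degree})$. Expanding the defining formula $\del q_{\gamma^+}=\sum(\ldots)\,t^I q_{\gamma^-}z^d$ of the cylindrical differential, applying $\del/\del t^{\alpha,p}$, and then $\mathrm{ev}_{t=0}$, one sees that only the monomials with $t^I=t^{\alpha,p}$ survive; hence, as maps $C^0_*\to C^0_*$, $\del^1_{(\alpha,p)}=\mathrm{ev}_{t=0}\circ\del_{(\alpha,p)}\circ\iota$ with $\iota\colon C^0_*\hookrightarrow C_*$ the inclusion of $t$-independent chains, and likewise $\del^1_{\nu}=\mathrm{ev}_{t=0}\circ\del_{\nu}\circ\iota$. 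Applying $\mathrm{ev}_{t=0}(\,\cdot\,)\circ\iota$ to the displayed $(2,0)$ identity and using that $\mathrm{ev}_{t=0}$ is a ring homomorphism in the $t$-variables (so the scalar coefficient and the operator factor can be evaluated at $t=0$ separately) yields, for $p>0$,
\[
 \del^1_{(\alpha,p)} \;=\; \left.\frac{\del^2\If}{\del t^{\alpha,p-1}\del t^{\mu}}\right|_{t=0}\ \eta^{\mu\nu}\ \del^1_{\nu}
\]
as endomorphisms of $C^0_*$.

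Next I would feed in the two geometric inputs already isolated above: for a contact manifold the maximum principle makes every finite-energy holomorphic sphere in $\IR\times V$ constant, so $\If$ coincides with the genus-zero Gromov–Witten potential of a point; and for dimensional reasons the two-point genus-zero descendant part of that potential vanishes (the relevant Deligne–Mumford space $\CM_{0,2}$ is empty), i.e. $\frac{\del^2\If}{\del t^{\alpha,p-1}\del t^{\mu}}\big|_{t=0}=0$ for every $p>0$. Substituting this into the last display annihilates the right-hand side, so $\del^1_{(\alpha,p)}=0$ already on the chain level, and therefore as a map on $HC^0_*(V)$; since the induced map on homology is independent of the chosen coherent sections, the vanishing holds unconditionally, which is the claim.

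The only step that needs to be carried out with some care — and the one I expect to be the main (though modest) obstacle — is the first one: checking that the truncation $\mathrm{ev}_{t=0}$ genuinely intertwines the triple $(\del,\del_{(\alpha,p)},\del_{\nu})$ on $C_*$ with the triple $(\del^0,\del^1_{(\alpha,p)},\del^1_{\nu})$ on $C^0_*$, and hence that a chain-level identity over $C_*$ restricts to a chain-level identity over $C^0_*$ which descends to $HC^0_*(V)$. This is a bookkeeping consequence of the shape $\del=\del^0+(\text{positive $t$-degree})$ together with the very definition of $\del^1_{(\alpha,p)}$ as the count of cylinders carrying a single additional marked point, but it should be verified honestly; once it is in place, the corollary is a one-line substitution into Corollary \ref{TRR-eqCH}.
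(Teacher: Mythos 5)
Your proposal is correct and follows the paper's own route: the paper likewise deduces the vanishing by specializing the topological recursion relation $(2,0)$ of the preceding corollary to $t=0$ and using that, by the maximum principle, $\If$ is the Gromov--Witten potential of a point, whose two-point descendant derivative $\frac{\del^2\If}{\del t^{\alpha,p-1}\del t^{\mu}}\big|_{t=0}$ vanishes for $p>0$ by dimensional reasons. The only difference is that you spell out the bookkeeping of the truncation $\mathrm{ev}_{t=0}$, which the paper leaves implicit.
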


While this result shows that counting holomorphic cylinders with one additional marked point and gravitational descendants is not very interesting in the case of contact manifolds, it is clear from our ongoing work on topological recursion in full rational symplectic field theory that the arguments used above do \emph{not} apply to the sequence of commuting Hamiltonians $\Ih^1_{(\alpha,p)}$ of rational SFT, which in the Floer case lead to the integrable hierarchies of Gromov-Witten theory. More precisely, we expect that the corresponding recursive procedure involves primary invariants belonging to a non-equivariant version of rational SFT. 

\vspace{0.5cm}

\subsection{Action of quantum cohomology on non-equivariant cylindrical homology}
As we already mentioned in subsection 3.2, in \cite{PSS} Piunikhin-Salamon-Schwarz defined an action of the quantum cohomology ring of the underlying symplectic manifold on the Floer (co)homology groups by counting Floer cylinders with one additional marked point constrained to $\IR\times\{0\}\subset\RS$. Note that for this the authors also needed to show that the concatination of two maps on Floer cohomology corresponds to the ring multiplication in quantum cohomology. \\

While in \cite{PSS} this result was proven by establishing appropriate compactness and gluing theorems for all appearing moduli spaces, in this final subsection we want to show how our topological recursion relation (1,1) together with the relation (2,0) can be used to define a corresponding action of the quantum cohomology (defined using the Gromov-Witten potential introduced above) on the non-equivariant cylindrical contact homology of a stable Hamiltonian manifold after setting all $t$-variables to zero. \\

In the same way as for closed symplectic manifolds we define the quantum cohomology $QH^*(V)$ of the stable Hamiltonian manifold $V$ as the vector space freely generated by formal variables $t^{\alpha}=t^{\alpha,0}$, with coefficients which are Laurent series in the $z_n$-variables. Note that, as vector spaces, the only difference to the usual cohomology groups $H^*(V)$ again lies in the different choice of coefficients. On the other hand, while for general stable Hamiltonian manifolds the quantum product defined using the Gromov-Witten three-point invariants is different from the usual product structure of $H^*(V)$, note that for contact manifolds we have $QH^*(V)=H^*(V)$ (with the appropriate choice of coefficients) as in this case the Gromov-Witten potential of $V$ agrees with that of a point. Recalling that the linear maps $\del^1_{\check{\alpha}}=\del^1_{(\check{\alpha},0)}$ actually descend to maps on (non-equivariant) cylindrical homology $HC^{0,\textrm{non-}S^1}_*(V)$, we prove the following  

\begin{corollary} The map
\begin{eqnarray*} 
QH^*(V)\otimes HC^{0,\textrm{non-}S^1}_*(V)\to  HC^{0,\textrm{non-}S^1}_*(V), 
&& (t^{\alpha},\hat{q}_{\gamma}) \mapsto \del^1_{\check{\alpha}}(\hat{q}_{\gamma}),\\
&& (t^{\alpha},\check{q}_{\gamma}) \mapsto \del^1_{\check{\alpha}}(\check{q}_{\gamma}),\\
\end{eqnarray*} 
defines an action of the quantum cohomology ring $QH^*(V)$ on the non-equivariant cylindrical homology $HC^{0,\textrm{non-}S^1}_*(V)$ (after setting all $t=0$). \end{corollary}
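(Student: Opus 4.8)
The plan is to check that the prescription $t^{\alpha}\cdot x := \del^1_{\check{\alpha}}(x)$ satisfies the two axioms of a module action of the ring $QH^*(V)$: compatibility with the quantum cup product, i.e.
\[ \del^1_{\check{\alpha}}\circ\del^1_{\check{\beta}} \;\simeq\; \frac{\del^3\If}{\del t^{\alpha}\,\del t^{\beta}\,\del t^{\mu}}\Big|_{t=0}\,\eta^{\mu\nu}\,\del^1_{\check{\nu}} \qquad\text{on }HC^{0,\textrm{non-}S^1}_*(V), \]
together with the unit axiom $\del^1_{\check{0}}=\id$ on homology. As in \cite{PSS}, the first identity carries the real content, and it is here that the topological recursion relations enter. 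The idea is to feed relation $(2,0)$ of Theorem~\ref{TRR-noneqCH}, which reads $\del_{(\check{\alpha},i)}\simeq\tfrac{\del^2\If}{\del t^{\alpha,i-1}\del t^{\mu}}\eta^{\mu\nu}\del_{\check{\nu}}$ on homology, into relation $(1,1)$. Applying the derivation $N=\sum_{\beta,j}t^{\beta,j}\tfrac{\del}{\del t^{\beta,j}}$ to the right-hand side of $(2,0)$ by the Leibniz rule and substituting into the left-hand side of $(1,1)$, the common summand $\tfrac{\del^2\If}{\del t^{\alpha,i-1}\del t^{\mu}}\eta^{\mu\nu}N\del_{\check{\nu}}$ cancels, leaving the homology-level identity
\[ \Big(N\frac{\del^2\If}{\del t^{\alpha,i-1}\del t^{\mu}}\Big)\eta^{\mu\nu}\,\del_{\check{\nu}} \;\simeq\; \tfrac{1}{2}\big[\del_{(\check{\alpha},i-1)},\,\check{N}\,\del\big]_+ . \]

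Next I would specialise to $i=1$, so that $\del_{(\check{\alpha},0)}=\del_{\check{\alpha}}$, and isolate the part of this identity that is bilinear in the extra marked points by applying $\tfrac{\del}{\del t^{\beta,0}}\big|_{t=0}$ to both sides. On the left the only surviving term comes from differentiating the explicit factor $t^{\gamma,k}$ inside $N$; it produces $\tfrac{\del^3\If}{\del t^{\alpha}\del t^{\beta}\del t^{\mu}}\big|_{t=0}\,\eta^{\mu\nu}\,\del^1_{\check{\nu}}$, whose coefficient $c_{\alpha\beta}^{\nu}$ is exactly the structure constant of the quantum product of $QH^*(V)$ at $t=0$ (defined via $\If=\Ih|_{p=q=0}$). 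On the right, differentiating $\check{N}\,\del=\sum_{\gamma,k}t^{\gamma,k}\del_{(\check{\gamma},k)}$ again forces $\tfrac{\del}{\del t^{\beta,0}}$ to hit the explicit factor $t^{\gamma,k}$, after which all $t$-variables are set to zero, so that one is left with $\tfrac{1}{2}[\del^1_{\check{\alpha}},\del^1_{\check{\beta}}]_+$. Invoking the fact---proved just as for the operators $\del_{\check{\alpha},p}$ in subsection~4.1, in complete analogy with the Floer case---that the $\del^1_{\check{\alpha}}$ pairwise commute on homology, the graded anticommutator collapses to the composition and the desired compatibility $\del^1_{\check{\alpha}}\circ\del^1_{\check{\beta}}\simeq c_{\alpha\beta}^{\nu}\,\del^1_{\check{\nu}}$ follows. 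Note that although the chain-level form of $(1,1)$ requires its special coherent collection of sections, relation $(2,0)$ holds on homology for \emph{every} coherent choice, so both are available at the special choice adapted to $(1,1)$; and the final identity involves only the choice-independent homology operators $\del^1_{\check{\alpha}}$ (which carry no descendants) together with a Gromov--Witten number, so it is unambiguous.

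For the unit axiom I would argue as in \cite{PSS}: $\del^1_{\check{0}}$ counts holomorphic cylinders carrying a single marked point constrained to $\IR\times\{0\}$ at which the fundamental class $\theta_0=1$ is pulled back, so the marked point imposes no cohomological constraint, and a deformation making this marked point invisible should identify the relevant moduli spaces (in the Morse--Bott cascade model) with those computing the identity on $HC^{0,\textrm{non-}S^1}_*(V)$. I expect the genuine obstacles to be exactly these: first, this unit computation, which as in Floer theory is not formal and needs its own degeneration argument; and second, justifying rigorously that the formal specialisation $\tfrac{\del}{\del t^{\beta,0}}\big|_{t=0}$ of the two generating-function identities above really does compute the asserted operator composition and nothing else---in particular, tracking through the localisation result of subsection~4.3 that the power $\psi^1$ sitting on the constrained marked point on the left of the $i=1$ relation is correctly accounted for, and that no further boundary strata contribute. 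Once these two points are settled, the ring-compatibility, and hence the module-action property, is a formal consequence of the topological recursion relations $(2,0)$ and $(1,1)$.
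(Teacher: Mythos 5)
Your core algebraic argument is exactly the paper's: compare the $t^{\beta,0}$-derivative of relation $(2,0)$ with the specialisation of relation $(1,1)$, cancel the common summand $\frac{\del^2 \If}{\del t^{\alpha,i-1}\del t^{\mu}}\eta^{\mu\nu}\del^2_{\check{\nu},(\beta,j)}$, obtain
\[
\tfrac{1}{2}\bigl[\del^1_{(\check{\alpha},i-1)},\del^1_{(\check{\beta},j)}\bigr]_+ \;=\; \frac{\del^3 \If}{\del t^{\alpha,i-1}\del t^{\beta,j}\del t^{\mu}}\,\eta^{\mu\nu}\,\del^1_{\check{\nu}},
\]
and then use $[\del^1_{(\check{\alpha},i-1)},\del^1_{(\check{\beta},j)}]_-=0$ on homology to turn the anticommutator into the composition, finishing with $i=1$, $j=0$. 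That part is fine and matches the paper.

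The genuine gap is in your disposal of the coherence issue. You argue that since $(2,0)$ holds on homology for every coherent choice, both relations "are available at the special choice adapted to $(1,1)$," and that the final identity is unambiguous because it only involves the choice-independent one-point operators. The paper explicitly warns that this reasoning is insufficient: the two relations you are comparing, once differentiated in $t^{\beta,j}$, involve the two-point operators $\del^2_{(\check{\alpha},i),(\beta,j)}$ and $\del^2_{\check{\nu},(\beta,j)}$, which count cylinders with more than one additional marked point and do \emph{not} descend to homology in a choice-independent way; moreover a chain-level identity valid up to chain homotopy does not survive differentiation in $t$, since differentiating $\del H + H\del$ produces the terms $\del_{(\beta,j)}H + H\del_{(\beta,j)}$, which are not of chain-homotopy form. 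The missing ingredient is the paper's interpolation argument: one connects the two special coherent collections of sections by a homotopy of coherent sections that are themselves pulled back from the Deligne--Mumford space, where the singular strata have codimension at least two, so the homotopy can be chosen to avoid them; under the standing assumption that there are no holomorphic planes, the only degenerations occurring during the interpolation are splittings of cylinders carrying no additional marked points, whence the difference of the two chain-level expressions is exact and the comparison is legitimate on homology. You flag "justifying the formal specialisation" as an obstacle to be settled, but without this interpolation idea the step where you equate the two expressions for $\del^2_{(\check{\alpha},i),(\beta,j)}$ is not justified. (Your discussion of the unit axiom is reasonable but is not addressed in the paper and is not where the difficulty lies.)
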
 

\begin{proof}
It follows from our topological recursion relations (2,0) and (1,1) for non-equivariant cylindrical contact homology that, after setting all $t$-variables to zero, we indeed have the following two non-averaged topological recursion relations,
\begin{eqnarray*}
\del^2_{(\check{\alpha},i),(\beta,j)} &=& \frac{\del^2 \If}{\del t^{\alpha,i-1}\del t^{\mu}} \eta^{\mu\nu} \del^2_{\check{\nu},(\beta,j)} + \frac{\del^3 \If}{\del t^{\alpha,i-1}\del t^{\beta,j}\del t^{\mu}} \eta^{\mu\nu} \del^1_{\check{\nu}},\\
\del^2_{(\check{\alpha},i),(\beta,j)} &=& \frac{\del^2 \If}{\del t^{\alpha,i-1}\del t^{\mu}} \eta^{\mu\nu} \del^2_{\check{\nu},(\beta,j)} + \frac{1}{2}[\del^1_{(\check{\alpha},i-1)}, \del^1_{(\check{\beta},j)}]_+.
\end{eqnarray*}
While the first equation follows from differentiating the recursion relation (2,0) with respect to the formal variable $t^{\beta,j}$, the second equation follows from the recursion relation (1,1) by first setting all $t$-variables except $t^{\beta,j}$ to zero. \\

Ignoring invariance problems for the moment, the desired result follows by comparing both equations. Since the left side and the first summand on the right side of both equations agree, it follows that $$ \frac{1}{2}[\del^1_{(\check{\alpha},i-1)}, \del^1_{(\check{\beta},j)}]_+= \frac{\del^3 \If}{\del t^{\alpha,i-1}\del t^{\beta,j}\del t^{\mu}} \eta^{\mu\nu} \del^1_{\check{\nu}}.$$ On the other hand, since $[\del^1_{(\check{\alpha},i-1)}, \del^1_{(\check{\beta},j)}]_-=0$ on homology, and using the natural relation between the commutator and its corresponding anti-commutator, $\frac{1}{2}[\del^1_{(\check{\alpha},i-1)}, \del^1_{(\check{\beta},j)}]_+ + \frac{1}{2}[\del^1_{(\check{\alpha},i-1)}, \del^1_{(\check{\beta},j)}]_- = \del^1_{(\check{\alpha},i-1)}\circ\del^1_{(\check{\beta},j)}$,  it follows that after passing to homology we have $$\del^1_{(\check{\alpha},i-1)}\circ\del^1_{(\check{\beta},j)}= \frac{\del^3 \If}{\del t^{\alpha,i-1}\del t^{\beta,j}\del t^{\mu}} \eta^{\mu\nu} \del^1_{\check{\nu}},$$ so that the desired result follows after setting $i=1$ and $j=0$. \\

On the other hand, since the above recursion relations are true on the chain level only for special, in particular, two \emph{different} coherent collections of sections, the above reasoning leads to a true statement as the desired result is a result about invariants, i.e. it is independent of the chosen auxiliary data. While the above identity should hold after passing to homology, note that the invariance problem cannot be resolved as for the topological recursion relation (2,0) by simplying passing to homology, since the two equations with which we started involve linear maps counting holomorphic cylinders with more than one additional marked point. \\

In order to show that the desired composition rule still holds after passing to homology, we make use of the fact that we can choose nice coherent collections of sections interpolating between the two special coherent sections (in the sense of subsection 2.2) as follows. Since it follows from the proof of the main theorem in subsection 4.3 that our special coherent collections of sections are indeed pulled back from the moduli space of curves to the moduli space of maps (we can ignore the bubbles that we added afterwards here), we do not need to consider arbitrary interpolating coherent sections but only those which are again pull-backs of coherent sections on the underlying moduli space of curves. Since in the moduli space of curves (in contrast to the moduli space of maps) the strata of singular curves (maps) are of codimension at least two, we can further choose the homotopy such that it avoids all singular strata, so that all underlying curves in the homotopy are indeed smooth. Since we excluded holomorphic planes throughout the paper, it follows that the only singular maps that appear during the interpolation process are holomorphic maps where a cylinder without additional marked points splits off. But this implies that the difference between the two different special coherent collections of sections is indeed exact, so that the above equation indeed holds after passing to homology. 
\end{proof}

While in the same way we can give an alternative proof of the result of Piunikhin-Salamon-Schwarz by using our topological recursion relations (1,1) and (2,0) in symplectic Floer theory of subsection 3.3, in contrast note that in (equivariant) cylindrical homology, due to the differences in the topological recursion formulas in this case, neither $\del^1_{\alpha}$ nor $\del^1_{\check{\alpha}}$ defines an action of quantum cohomology on (equivariant) cylindrical homology. \\

Finally, using the isomorphism of Bourgeois-Oancea in \cite{BO}, we show how the latter result also establishes an action of the cohomology ring on the symplectic homology, and thereby generalizes the result of \cite{PSS} in the obvious way from closed manifolds to compact manifolds $X$ with contact boundary $\del X=V$. For the proof we assume that there not only no holomorphic planes in $(\IR\times) V$, but also no holomorphic planes in the filling $X$. Furthermore we assume that all $t$-variables are set to zero without explicitly mentioning it again.

\begin{corollary} Using the isomorphism between non-equivariant cylindrical homology and positive symplectic homology in \cite{BO}, our result defines an action of the cohomology ring $H^*(X)$ on (full) symplectic homology $SH^0_*(X)$ (at $t=0$).  \end{corollary}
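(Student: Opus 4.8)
The plan is to transport the action constructed in the previous corollary across the Bourgeois--Oancea isomorphism and then extend it over the tautological exact sequence relating positive and full symplectic homology. First I would invoke the isomorphism of \cite{BO} identifying the non-equivariant cylindrical homology $HC^{0,\textrm{non-}S^1}_*(V)$ with the positive symplectic homology $SH^{+,0}_*(X)$; since this isomorphism is induced by a chain map compatible with the Morse--Bott description of the generators, the action $QH^*(V)\otimes HC^{0,\textrm{non-}S^1}_*(V)\to HC^{0,\textrm{non-}S^1}_*(V)$, $t^\alpha\otimes x\mapsto\del^1_{\check\alpha}(x)$, carries over verbatim to an action of $QH^*(V)=H^*(V)$ (the equality holding because $V=\del X$ is contact) on $SH^{+,0}_*(X)$. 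Precomposing with the restriction homomorphism $H^*(X)\to H^*(\del X)=H^*(V)$ then yields an action of $H^*(X)$ on $SH^{+,0}_*(X)$; geometrically this is the count of Floer cylinders in the completion of $X$ carrying one additional marked point constrained to $\IR\times\{0\}$ at which a closed form representing a class pulled back from $X$ is evaluated, whose composition rule is, by the same topological recursion relations $(2,0)$ and $(1,1)$ used in the proof of the previous corollary (now read in the symplectic Floer theory of the filling $X$), the cup product of $H^*(X)$.

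Second, I would promote this to an action on the full symplectic homology $SH^0_*(X)$ by working with the short exact sequence of chain complexes $0\to C^{\mathrm{const}}\to C^{SH}\to C^{SH^+}\to 0$, where $C^{\mathrm{const}}$ is generated by the interior critical points and, as recalled in Section~2, computes $H^{\dim X-*}(X)\cong H^*(X)$. On $C^{\mathrm{const}}$ the count of Floer cylinders with one constrained marked point and a class pulled back from $X$ degenerates, as the Hamiltonian tends to zero, exactly to the Piunikhin--Salamon--Schwarz/quantum description of the cup product of $H^*(X)$ with itself, which is an honest action. One thus obtains compatible candidate actions of $H^*(X)$ on $C^{\mathrm{const}}$ (cup product), on $C^{SH^+}$ (the action above), and --- by the same marked-point construction applied to the full Floer complex of an admissible time-independent Hamiltonian on $X$ --- on $C^{SH}$, all fitting into the exact sequence. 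Passing to the direct limit over admissible Hamiltonians and to homology, the five lemma applied to the resulting ladder of long exact sequences shows that the candidate map is a genuine action of $H^*(X)$ on $SH^0_*(X)$ restricting to $\cup$ on the image of $H^*(X)$ and to the transported action on $SH^{+,0}_*(X)$, which is the desired generalization of \cite{PSS} from closed symplectic manifolds to compact symplectic manifolds with contact boundary.

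The main obstacle I anticipate is verifying the compatibility of the new action with the connecting homomorphism $H^*(X)\to SH^0_*(X)$ of this exact sequence: one must show that inserting a constrained marked point decorated by a class from $X$ commutes, up to chain homotopy, with the connecting map, which amounts to a degeneration/gluing analysis of moduli spaces of Floer cylinders with one marked point whose output breaks into an interior gradient trajectory and a cylinder asymptotic to a Reeb orbit. As in the proof of the previous corollary this has to be combined with an invariance argument --- independence of the coherent collection of sections used for the marked-point insertion and of the auxiliary data entering the direct limit --- for which one again restricts to interpolating coherent collections pulled back from the moduli spaces of curves, where the singular strata have codimension at least two and the only splitting of maps that occurs is of a cylinder without additional marked points, so that the difference between two admissible choices is exact. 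Once this compatibility is in place, the module axioms and the identification of the action with $\cup$ on the constants and with the $\del^1_{\check\alpha}$-action on the positive part follow formally.
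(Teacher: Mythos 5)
Your first paragraph is essentially the paper's argument: transport the action of $QH^*(V)=H^*(V)$ on $HC^{0,\textrm{non-}S^1}_*(V)$ across the Bourgeois--Oancea isomorphism to $SH^{0,+}_*(X)$ and precompose with the restriction $H^*(X)\to H^*(V)$ induced by $V\hookrightarrow X$. Where you diverge is in the passage from the positive part to full symplectic homology. The paper does not run an exact-sequence argument at all: it invokes the standing hypothesis, stated immediately before the corollary, that there are no holomorphic planes in the filling $X$. By Bourgeois--Oancea's computation of the differential this forces the full symplectic homology to split as a direct sum $SH^0_*(X)=SH^{0,+}_*(X)\oplus H^{\dim X-*}(X)$, and the action is then simply defined componentwise: the transported action on the first summand, and the natural action of $H^*(X)$ on itself on the second. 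Nothing needs to be checked against a connecting homomorphism because the connecting map vanishes.

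Your route through the short exact sequence $0\to C^{\mathrm{const}}\to C^{SH}\to C^{SH^+}\to 0$ is therefore both heavier than necessary and, as written, incomplete. The step you yourself flag as the main obstacle --- compatibility of the constrained-marked-point insertion with the connecting homomorphism up to chain homotopy --- is exactly the content you would have to supply, and the five lemma is not the tool that closes it: the five lemma certifies that a map of long exact sequences is an isomorphism, it does not show that a triple of compatible maps assembles into a \emph{module} action on the middle term, so the module axioms (compatibility of the composition with the cup product on $H^*(X)$) would still have to be verified directly on $SH^0_*(X)$. Under the paper's hypothesis all of this evaporates, since the sequence splits and the two summands are treated independently. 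If one wanted to drop the no-planes-in-the-filling assumption, your outline is the natural thing to attempt, but then the degeneration and gluing analysis at the connecting map is a genuine piece of work that your proposal only names rather than carries out.
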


\begin{proof}
Since we assume that there are no holomorphic planes in the filling $X$, it further follows from the computation of the differential in symplectic homology by Bourgeois and Oancea in \cite{BO} that the symplectic homology is given by the direct sum, 
$SH^0_*(X)=SH^{0,+}_*(X)\oplus H^{\dim X-*}(X)$. While the action of $H^*(V)$ on non-equivariant cylindrical contact homology $HC^{0,\textrm{non-}S^1}_*(V)$ established above defines an action of $H^*(X)$ on $SH^{0,+}_*(X)$ using the isomorphism $HC^{0,\textrm{non-}S^1}_*(V)\cong SH^{0,+}_*(X)$ and the natural map $H^*(X)\to H^*(V)$ defined by the inclusion $V\hookrightarrow X$, together with the natural action of $H^*(X)$ on itself we get the desired result. \end{proof}
 
Of course, we expect that this action agrees with the action of $H^*(X)$ on $SH^0_*(X)$ defined using the action on the Floer homology groups $FH^0_*(H)$ for admissible Hamiltonians and taking the direct limit, after generalizing the result in \cite{PSS} from closed symplectic manifolds to compact symplectic manifolds with contact boundary in the obvious way. 

\begin{example}
Using the natural map $H^*(Q)\to H^*(T^*Q)$ given by the projection, note that in the cotangent bundle case $X=T^*Q$ this defines an action of $H^*(Q)$ on $SH^0_*(T^*Q)$, which by \cite{AS} and \cite{SW} is isomorphic to $H_*(\Lambda Q)$, where $\Lambda Q$ denotes the loop space of $Q$. Introducing additional marked points on the cylinders in the proofs of Abbondandolo-Schwarz and Salamon-Weber, we expect that it can be shown that this action agrees with the natural action of $H^*(Q)$ on $H_*(\Lambda Q)$ given by the cap product and the base point map $\Lambda Q\to Q$. \\

On the other hand, as mentioned above, in the equivariant setting we do \emph{not} expect to find a natural action of $H^*(Q)$ on (equivariant) cylindrical homology $HC^0_*(S^*Q)$, which by \cite{CL} is isomorphic to the $S^1$-equivariant singular homology $H^{S^1}_*(\Lambda Q,Q)$. But this fits with the well-known fact that there is \emph{no} natural action of the cohomology \hbox{$(H^*(Q)\to) H^*(\Lambda Q)$} on relative $S^1$-equivariant homology $H^{S^1}_*(\Lambda Q,Q)$.
\end{example}

\subsection{Example: Cylindrical homology in the Floer case}

We end this paper by discussing briefly the important \emph{Floer case} of SFT, which was worked out in the paper \cite{F1} of the first author, including the neccessary transversality proof. Here $V=S^1\times M$ is equipped with a stable Hamiltonian structure $(\omega^H=\omega+dH\wedge dt,\lambda=dt)$ for some time-dependent Hamiltonian $H:S^1\times M\to\IR$ on a closed symplectic manifold $M=(M,\omega)$. It follows that the Reeb vector field is given by $R^H=\del_t+X^H_t$, so that in particular every one-periodic closed Reeb orbit is a periodic orbit of the time-dependent Hamiltonian $H$. More precisely, it can be shown, see \cite{F1}, that the chain complex of equivariant cylindrical contact homology naturally splits into subcomplexes generated by Reeb orbits of a fixed integer period and that the equivariant cylindrical homology generated by Reeb orbits of period one agrees with the standard symplectic Floer homology for the time-dependent Hamiltonian $H:S^1\times M\to\IR$. In order to see that the differentials indeed agree, one uses that the holomorphic map from the cylinder to $\IR\times V$ splits, $\tilde{u}=(h,u):\RS\to(\RS)\times M$, where the map $h:\RS\to\RS$ is just the identity (up to automorphisms of the domain). Note that in the Morse-Bott limit $H=0$  one arrives in the trivial circle bundle case and just gets back the relation between SFT and Gromov-Witten theory from \cite{EGH}.We now show the following important 

\begin{proposition} In the Floer case the topological recursion relations for equivariant cylindrical homology reproduce the topological recursion relations for symplectic Floer homology from section three. In particular, by passing to the Morse-Bott limit $H=0$, they reproduce the standard topological recursion relations of Gromov-Witten theory. Furthermore, the action of the quantum cohomology on the non-equivariant cylindrical homology splits and agrees with the action of quantum cohomology on symplectic Floer homology defined in \cite{PSS}. \end{proposition}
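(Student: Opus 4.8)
The plan is to specialize the localization result of subsection~4.3, together with Theorem~\ref{TRR-noneqCH} and Corollary~\ref{TRR-eqCH}, to the case $V=\Mph$ with $\phi=\id$, i.e. $V=S^1\times M$, using the splitting of holomorphic cylinders from \cite{F1}. Recall that in this case every finite-energy holomorphic cylinder $\tilde u$ in $\IR\times V=(\RS)\times M$ decomposes as $\tilde u=(h,u)$, where $h:\RS\to\RS$ is the identity up to a shift of the $\IR$-factor and $u:\RS\to M$ solves $\CR_{J,H}(u)=0$; adding marked points does not change the map, so the splitting persists on the moduli spaces $\CM_{0,r}(\gamma^+,\gamma^-)$ of holomorphic cylinders with $r$ marked points. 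First I would record that this identifies, for each integer period (the period-one part being symplectic Floer homology of $H$), the SFT moduli space $\CM_{0,r}(\gamma^+,\gamma^-)$ with the Floer moduli space $\CM_r(\gamma^+,\gamma^-)$ of section~three, compatibly with: (a) the evaluation maps --- the $M$-component of $\ev_k$ is the Floer evaluation while its $S^1$-component is the domain position of the $k$-th marked point, which is precisely why pulling back the chosen one-form on $V$ (integrating to one along the Reeb orbit) produces the constraint ``$k$-th point in $\IR\times\{0\}$''; (b) the canonical stabilization of the domain, hence the tautological line bundles $\LL_{i,r}$ and the coherent collections of sections built on them; and (c) the corner stratification, SFT nodal and two-level degenerations corresponding to sphere bubbling in $M$ and to breaking of Floer cylinders. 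Since $h=\id$ pins all the asymptotic $S^1$-markers that are otherwise free in general SFT, the Morse--Bott cascade description of subsection~4.1 trivializes: the non-equivariant complex of $S^1\times M$ is $CF^*(H)\otimes H^*(S^1)=\hat C_*\oplus\check C_*$ with $\hat C_*=CF^*(H)$, $\check C_*=CF^*(H)[1]$, and all the operations $\del$, $\del_{(\alpha,p)}$, $\del_{(\check\alpha,p)}$ respect this splitting and restrict, on each summand, to the corresponding Floer operations of section~three.

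Granting this dictionary, the special coherent (multi)sections constructed in subsection~4.3 --- which, as noted there, are pulled back from the Deligne--Mumford spaces $\CM_{0,n}$ along $\sigma$, cf.\ (\ref{loc1}) and the averaged form (\ref{average}) --- restrict to special coherent collections on the Floer moduli spaces, and the psi-class localization onto nodal-plus-two-level configurations becomes exactly the localization picture used in the proof of Proposition~\ref{TRR-Floer}: a component carrying the $i$-th (descendant) marked point and a component carrying the two reference points, which are now either punctures or additional marked points. Hence the three cases $(2,0)$, $(1,1)$, $(0,2)$ of Theorem~\ref{TRR-noneqCH} become verbatim the three equations of Proposition~\ref{TRR-Floer} --- so this specialization is at the same time the rigorous proof of that proposition promised in the remark of subsection~3.3 --- and tracing them through the identification of $\del_{(\alpha,p)}$ with the off-diagonal part of $\del_{(\check\alpha,p)}$ from subsection~5.1 turns them into the relations of Corollary~\ref{TRR-eqCH} for equivariant cylindrical homology, which is the first assertion. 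For the Morse--Bott limit one observes that already for $V=S^1\times M$ the potential $\If=\Ih|_{p=q=0}$ is the Gromov--Witten potential of the fibre $M$, so the coefficients $\frac{\del^2\If}{\del t^{\alpha,i-1}\del t^\mu}\eta^{\mu\nu}$ are the Gromov--Witten two-point invariants of $M$; as $H\to 0$ the stable Hamiltonian structure degenerates to the trivial circle bundle, the removable-singularity theorem replaces Floer cylinders by holomorphic spheres and, after collapsing the $S^1$-factor, the constraint to $\IR\times\{0\}$ becomes an ordinary marked point, so the relations converge to the Gromov--Witten topological recursion relations (and their averaged versions) of subsection~3.1, exactly as in the passage from \cite{EGH}.

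For the quantum cohomology action, I would use that in the Floer case $\del^1_{\check\alpha}=\del^1_{(\check\alpha,0)}$ counts holomorphic cylinders in $\IR\times V$ with one marked point constrained to $\IR\times\{0\}$; under $\tilde u=(h,u)$ with $h=\id$ these are precisely the Floer cylinders with one marked point on $\IR\times\{0\}$, i.e.\ the moduli spaces defining the Piunikhin--Salamon--Schwarz action, by the Morse--Bott correspondence of subsection~3.2. Because $h=\id$ pins the asymptotic $S^1$-markers, $\del^1_{\check\alpha}$ preserves the splitting $\hat C_*\oplus\check C_*$, so the action of the Corollary in subsection~5.2 is block-diagonal and on each block equals the \cite{PSS} action on $CF^*(H)$; the composition rule $\del^1_{(\check\alpha,i-1)}\circ\del^1_{(\check\beta,j)}=\frac{\del^3\If}{\del t^{\alpha,i-1}\del t^{\beta,j}\del t^\mu}\eta^{\mu\nu}\del^1_{\check\nu}$ established there specializes to the quantum-product composition verified in \cite{PSS}. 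In the Morse--Bott limit $H=0$ this becomes the action of $QH^*(M)$ on itself by quantum multiplication, i.e.\ the starting point of section~three.

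The main obstacle is, as throughout the paper, analytic: one must know that the abstract polyfold perturbations and the coherent collections of sections on the full SFT moduli spaces of $\IR\times V$ can be chosen compatibly with the product structure $\tilde u=(h,u)$, so that they actually descend to the Floer moduli spaces of $M$ --- equivalently, that the splitting of \cite{F1} persists on the perturbed moduli spaces with marked points and with coherent sections of the tautological bundles. A secondary point is that the limit $H\to 0$ must be taken compatibly with the chosen coherent sections, which should follow from a neck-stretching argument in the spirit of \cite{B} together with the fact that the sections of subsection~4.3 are pulled back from $\CM_{0,n}$, where the singular strata have codimension at least two --- the same mechanism used in the proof of the Corollary of subsection~5.2.
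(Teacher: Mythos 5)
Your overall strategy is the paper's: use the splitting $\tilde u=(h,u)$ with $h=\id$ from \cite{F1} to identify the cylindrical moduli spaces with the Floer ones, note that every holomorphic sphere $\CP\to\RS$ is constant so that $\If_{S^1\times M}=\If_M$, and argue that the non-equivariant differential and $\del_{\check\alpha}$ are block-diagonal (the off-diagonal cascade contribution vanishing because the $S^1$-symmetry is fixed twice), so that $HC^{\textrm{non-}S^1}_*=\hat{HF}_*\oplus\check{HF}_*$ and the quantum action splits into two copies of the \cite{PSS} action. That part of your proposal matches the paper.

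However, there is a genuine gap in your treatment of the first (and main) assertion. The claim is that the \emph{equivariant} relations of Corollary \ref{TRR-eqCH} reproduce the Floer relations of Proposition \ref{TRR-Floer}, and these two sets of relations do not have the same shape: the equivariant $(1,1)$ relation carries \emph{two} anti-commutator terms, $\frac{1}{2}[\del_{(\alpha,i-1)},\check N\del]_+$ and $\frac{1}{2}[\del_{(\check\alpha,i-1)},N\del]_+$, whereas the Floer $(1,1)$ relation has only one (and similarly for $(0,2)$). Your proposal never explains why the extra term disappears; instead you run the argument backwards, deriving the Floer relations from the non-equivariant theorem and then asserting that "tracing through the identification" yields the equivariant relations. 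The paper's mechanism is a specific substitution: one applies the equivariant relation not to $\alpha\in H^*(M)$ but to $\alpha_1=\alpha\wedge dt\in H^*(S^1\times M)$, for which the splitting $\tilde u=(h,u)$ with $h=\id$ gives the identities $\del_{(\alpha_1,p)}=\del_{(\check\alpha,p)}$ (integrating $dt$ over the unconstrained marked point is what realizes the constraint to $\IR\times\{0\}$) and, crucially, $\del_{(\check\alpha_1,p)}=0$, because constraining a point that already carries a $dt$-factor divides out the $S^1$-symmetry twice. It is this vanishing that kills the term $\frac{1}{2}[\del_{(\check\alpha_1,i-1)},N\del]_+$ and makes the equivariant relation for $\alpha_1$ collapse onto the Floer relation for $\alpha$. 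You gesture at the first identity (pulling back the one-form dual to the Reeb orbit), but without the second the two sides of the comparison simply do not match term by term, and your blanket claim that all of $\del$, $\del_{(\alpha,p)}$, $\del_{(\check\alpha,p)}$ "restrict on each summand to the corresponding Floer operations" obscures exactly the distinction between $\alpha$ and $\alpha\wedge dt$ on which the computation rests.
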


\begin{proof} In the same way as it follows from the fact that the map $h:\RS\to\RS$ is just the identity (up to automorphisms of the domain) that the differentials $\del$ in equivariant cylindrical homology and symplectic Floer homology naturally agree, it follows that  the linear maps $\del_{\check{\alpha}}$ for $\alpha\in H^*(M)$ introduced in symplectic Floer homology in section three and in equivariant cylindrical homology (using the corresponding map in non-equivariant cylindrical homology) agree. Furthermore it follows from the same result that for $\alpha_1=\alpha\wedge dt\in H^*(S^1\times M)$ we have
\[\del_{\alpha_1,p} = \del_{\check{\alpha},p}: C_*\to C_*,\;\;  \del_{\check{\alpha_1},p} = 0.\]
Note that the last equation follows from the fact that the $S^1$-symmetry is divided out twice. Again only working out the second topological recursion relation, it then indeed follows that  
\begin{eqnarray*} 
 (N \del_{(\check{\alpha},i)}: CF_*\to CF_*) &=& (N \del_{(\alpha_1,i)}: C_*\to C_*) \\&=& \frac{\del^2 \If_{S^1\times M}}{\del t^{\alpha_1,i-1}\del t^{\mu}} \eta^{\mu\nu} (N \del_{\nu}: C_*\to C_*) \\&+& (\frac{1}{2}[\del_{(\alpha_1,i-1)}, \check{N}\,\del]_+: C_*\to C_*) \\&+& (\frac{1}{2}[\del_{(\check{\alpha}_1,i-1)}, N\,\del]_+: C_*\to C_*)\\&=& \frac{\del^2 \If_M}{\del t^{\alpha,i-1}\del t^{\mu}} \eta^{\mu\nu} (N \del_{\check{\nu}}: CF_*\to CF_*) \\&+& (\frac{1}{2}[\del_{(\check{\alpha},i-1)}, \check{N}\,\del]_+: CF_*\to CF_*) , 
\end{eqnarray*}
where we further use that by similar arguments, namely that every holomorphic map $\CP\to\RS$ is constant, the Gromov-Witten potential of the stable Hamiltonian manifold $S^1\times M$ is given by the Gromov-Witten potential of the symplectic manifold $M$, see also the discussion at the beginning of subsection 4.2. \\

Apart from the fact that this proves the desired statement about the topological recursion relations, note that the same equation shows that in this special case the linear map $\del_{\check{\alpha}}: C_*\to C_*$ indeed leads to an action of the quantum cohomology of $S^1\times M$ on the \emph{equivariant} cylindrical homology, which agrees with the one defined by \cite{PSS} on symplectic Floer homology. For the action of quantum cohomology $QH^*(S^1\times M)$ on the non-equivariant cylindrical homology $HC^{\textrm{non-}S^1}_*(S^1\times M)$, observe that the differential of non-equivariant cylindrical homology is indeed of diagonal form \[\del = \diag(\del,\del): \hat{CF}_*\oplus \check{CF}_* \to  \hat{CF}_*\oplus \check{CF}_*\] with the Floer homology differential $\del: CF_*\to CF_*$. This follows from the fact that in this case the only off-diagonal contribution $\delta: \hat{CF}_*\to\check{CF}_*$ is also zero, which as above again follows from the fact that the $S^1$-symmetry on the cylinder is divided out twice. Furthermore note that by the same argument the linear map $\del_{\check{\alpha}}$ is also of diagonal form. It follows that the non-equivariant cylindrical homology is given as a direct sum, $HC^{\textrm{non-}S^1}_*=\hat{HF}_*\oplus\check{HF}_*$, that the quantum cohomology acts on both factors separately and agrees with the action defined in \cite{PSS}.
\end{proof}

\vspace{0.5cm}

\end{document}